\newcommand{\N}{\mathbb N}
\newcommand{\Z}{\mathbb Z}
\newcommand{\R}{\mathbb R}
\newcommand{\eps}{\varepsilon}
\renewcommand{\P}{\mathbb P}
\newcommand{\E}{\mathbb E}
\newcommand{\1}{\mathbbm 1}
\newcommand{\0}{\mathbf 0}
\newcommand{\F}{\mathcal F}
\newtheorem{theorem}{Theorem}[section]
\newtheorem{lemma}[theorem]{Lemma}
\newtheorem{proposition}[theorem]{Proposition}
\newtheorem{corollary}[theorem]{Corollary}
\newtheorem{thmx}{Theorem}
\theoremstyle{remark}
\newtheorem{remark}[theorem]{Remark}
\theoremstyle{definition}
\newcommand{\isDistr}{\overset d=}
\newcommand{\dd}{\text{d}}
\newcommand{\cev}[1]{\accentset{\leftarrow}{#1}}
\newcommand{\A}{\mathcal A}
\newcommand{\q}{{q^*}}
\newcommand{\p}{{p^*}}
\newcommand{\ceil}[1]{{\left\lceil {#1}\right\rceil}}
\renewcommand{\a}{\mathfrak{a}}
\newcommand{\GFF}{\operatorname{GFF}}
\newcommand{\PSRW}{P^{\operatorname{SRW}}}
\newcommand{\ESRW}{E^{\operatorname{SRW}}}
\newcounter{constants}
\newcommand{\newconstant}
{
\refstepcounter{constants}\ensuremath{c_{\theconstants}}
}
\newcommand{\oldconstant}[1]{\ensuremath{c_{\ref{#1}}}}
\newcommand{\X}{\mathcal X}
\newcommand{\Visit}{V}
\newcommand{\VVisit}{\widehat V}
\newcommand{\eeta}{\eta_1}
\newcommand{\eeeta}{\eta_2}
\newcommand{\localization}{exceptional }
\newcommand{\localizationn}{``exceptional'' }
\title{Fluctuations of partition functions of directed polymers in weak disorder beyond the $L^2$-phase}
\author{Stefan Junk}
\address{AIMR, Tohoku University. 2-1-1 Katahira, Aoba-ku, Sendai, 980-8577 Japan}
\email{sjunk@tohoku.ac.jp}
\date{\today}
\begin{document}

\lineskip=0pt

\begin{abstract}
We study the directed polymer model in a bounded environment in weak disorder  without $L^2$-boundedness, specifically the speed of homogenization for the field $(W_n^{x})_{x\in\Z^d}$, where $W_n^{x}$ denotes the associated martingale for the polymer starting from $x$. We show that a suitably re-centered spatial average over a set of diameter $n^{1/2}$ convergence to zero at rate $n^{-\xi+o(1)}$, where the exponent is an explicit function of the inverse temperature $\beta$.
\end{abstract}

\maketitle

\section{Introduction}

\subsection{Motivation}\label{sec:motivation}

The directed polymer model describes random paths, called \emph{polymers} in this context, in a medium with random impurities. Much information about the long-term behavior of the polymer is encoded in an associated martingale $(W_n)_{n\in\N}$. Specifically, it is known that the movement is diffusive if and only if the almost sure limit $W_\infty\coloneqq \lim_{n\to\infty}W_n$ is positive, which occurs in spatial dimension $d\geq 3$ at high enough temperatures. This situation is referred to as \emph{weak disorder} and it is the focus of the present article.

\smallskip The \emph{$L^2$-weak disorder phase} refers to a subset of the weak disorder phase characterized by $L^2$-boundedness of $(W_n)_{n\in\N}$. In contrast to the implicit condition $W_\infty>0$ used to define weak disorder, $L^2$-boundedness is straightforward to check since the second moment can be expressed in terms of the moment generating function of the overlap $\sum_n\1_{X_n=Y_n}$ between two independent random walks $(X_n)_{n\in\N}$ and $(Y_n)_{n\in\N}$. This observation greatly simplifies calculations and, as a result, there are many more papers that analyze this regime than the more general weak disorder phase. It is, however, known that $L^2$-boundedness only holds in a subset of the weak disorder phase.

\smallskip A number of results first proved under this condition have later been extended to the whole weak disorder phase, most notably the central limit theorem for the polymer endpoint, but our understanding of the $L^2$-weak disorder phase is much more complete. For example, it is known that $W_\infty$ is an analytic function of the inverse temperature \cite[Theorem 6.2]{CY06} and the rate of convergence of $W_n$ to $W_\infty$ can be computed explicitly \cite[Theorem 1.1]{CN21}. It is natural to wonder whether such results can be extended to the whole weak disorder phase with different techniques. At present, it is not clear whether the gap in knowledge between the two regimes is due to the limitations of our current methods or whether a quantifiable change in behavior occurs between weak disorder and $L^2$-weak disorder. 

\smallskip  In this paper, we give some evidence for the latter hypothesis. Namely, our main result concerns the speed of homogenization of the field $(W_n^{0,x})_{n\in\N,x\in\Z^d}$, where $x$ indicates the starting point of the polymer. After taking a spatial average over a set of diameter $\sqrt n$ and re-centering, we show that the speed of convergence is $n^{-\xi+o(1)}$, where the exponent $\xi$ is different from the corresponding exponent in the $L^2$-weak disorder phase.

\subsection{Definition of the model}\label{sec:formal}

A recent survey of the model can be found in \cite{C17}. Let $((\omega_{t,x})_{(t,x)\in\N\times\Z},\F,\P)$ be an i.i.d. family of real-valued weights, called \emph{environment}, satisfying  
\begin{align}\label{eq:exp_mom}
\E\big[e^{\beta|\omega_{0,0}|}\big]<\infty\quad\text{ for all }\beta\geq 0.
\end{align}
We write $\F_k\coloneqq\sigma(\omega_{t,x}:t\leq k)$ for the natural filtration of $\omega$. The energy of a path $\pi$ in time-interval $I\subset\R_+$ is defined by
\begin{align}\label{eq:def_H}
H_I(\omega,\pi)\coloneqq \textstyle \sum_{i\in I\cap\N} \omega_{i,\pi_i},
\end{align}
with $H_n\coloneqq H_{[1,n]}$. For a parameter $\beta\geq 0$, called the \emph{inverse temperature}, the \emph{polymer measure} $\mu_{\omega,n}^\beta$ is defined by
\begin{align*}
\mu_{\omega,n}^\beta(\dd X)\coloneqq (Z_n^\beta)^{-1}e^{\beta H_n(\omega,X)}\PSRW(\dd X),
\end{align*}
where $(X,\PSRW)$ denotes the simple random walk and $Z_n^\beta\coloneqq \ESRW[e^{\beta H_n(\omega,X)}]$ denotes the normalizing constant, called the \emph{partition function}. That is, $\mu_{\omega,n}^\beta$ is a perturbation of $\PSRW$ such that paths are attracted by sites with positive weight and repelled by negative ones. Next, we introduce the associated martingale $(W_n^\beta)_{n\in\N}$ mentioned in Section~\ref{sec:motivation},
\begin{align*}
W_n^\beta\coloneqq \frac{Z_n^\beta}{\E[Z_n^\beta]}=\ESRW[e^{\beta H_n(\omega,X)-n\lambda(\beta)}],\qquad\text{ where }\lambda(\beta)\coloneqq \log\E[e^{\beta\omega_{0,0}}].
\end{align*}
In view of \eqref{eq:exp_mom}, it is not hard to see that the almost sure limit $W_\infty^\beta\coloneqq \lim_{n\to\infty}W_n^\beta$ satisfies a zero-one law, $\P(W_\infty^\beta>0)\in\{0,1\}$, and we distinguish between weak disorder \eqref{eq:WD} and strong disorder \eqref{eq:SD} accordingly,
\begin{align}
\P(W_\infty^\beta>0)=1,\tag{WD}\label{eq:WD}\\ 
\P(W_\infty^\beta=0)=1.\tag{SD}\label{eq:SD}
\end{align}

In the following theorem, we collect some known results about the transition from \eqref{eq:SD} to \eqref{eq:WD} as well as some basic information about the behavior within those regimes.
\begin{thmx}\label{thmx:phase}
\begin{itemize}
 \item[(i)]In dimensions $d=1$ and $d=2$, \eqref{eq:SD} holds for all $\beta>0$. In dimensions $d\geq 3$, there exists $\beta_{cr}=\beta_{cr}(d)\in(0,\infty)$ such that \eqref{eq:WD} holds for $\beta\in[0,\beta_{cr})$ and \eqref{eq:SD} holds for $\beta\in(\beta_{cr},\infty)$. 
 \item[(ii)]In dimensions $d\geq 3$, there exists $\beta_{cr}^{L^2}\in(0,\beta_{cr})$ such that $(W_n^\beta)_{n\in\N}$ is $L^2$-bounded if and only if $\beta\in[0,\beta_{cr}^{L^2})$. 
 \item[(iii)] Weak disorder \eqref{eq:WD} implies that $(W_n^\beta)_{n\in\N}$ is uniformly integrable and that the polymer measure satisfies a central limit theorem in probability, i.e., for every $f\colon\R^d\to\R$ bounded and continuous,
 \begin{align*}
\sum_{x\in\Z^d}f(x/\sqrt n)\mu_{\omega,n}^\beta(X_n=x)\xrightarrow[n\to\infty]P\int_{\R^d}f(x)\varphi(x)\dd x,
\end{align*}
where $\varphi$ is the standard normal density.
 \item[(iv)] Strong disorder \eqref{eq:SD} implies that the polymer measure localizes, i.e., there exists $c>0$ such that $\limsup_{n\to\infty}I_n\geq c$ almost surely, where 
\begin{align}\label{eq:I_n}
I_n\coloneqq \textstyle\sum_{x\in\Z^d}\mu_{\omega,n-1}(X_n=x)^2.
\end{align}
\end{itemize}
\end{thmx}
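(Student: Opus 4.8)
Since Theorem~\ref{thmx:phase} assembles facts that are by now classical, the plan is not to reprove everything from scratch but to indicate which mechanism each item rests on and to flag the one or two ingredients that are not routine.

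For (ii) and the $d\ge3$ half of (i) the engine is the second-moment identity
\[ \E\big[(W_n^\beta)^2\big]=(\ESRW)^{\otimes 2}\Big[\exp\Big((\lambda(2\beta)-2\lambda(\beta))\,\textstyle\sum_{k=1}^n\1_{X_k=Y_k}\Big)\Big], \]
where $Y$ is an independent copy of $X$. In $d\ge3$ the difference walk $X-Y$ is transient, so the right-hand side stays bounded in $n$ precisely when $\lambda(2\beta)-2\lambda(\beta)$ lies below the reciprocal of the Green function of $X-Y$ at the origin; since $\beta\mapsto\lambda(2\beta)-2\lambda(\beta)$ is increasing and vanishes at $\beta=0$, this exhibits a threshold $\beta_{cr}^{L^2}>0$ below which $(W_n^\beta)_n$ is $L^2$-bounded, and, because a nonnegative $L^2$-bounded martingale converges in $L^1$ to a positive limit, it also gives \eqref{eq:WD} on $[0,\beta_{cr}^{L^2})$, hence $\beta_{cr}^{L^2}\le\beta_{cr}$. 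That \eqref{eq:WD} and \eqref{eq:SD} in fact partition $[0,\infty)$ into two intervals, defining $\beta_{cr}$, follows from monotonicity in $\beta$ of the relevant overlap quantities, as in Comets--Yoshida \cite{CY06}. The one genuinely delicate point is the \emph{strict} inequality $\beta_{cr}^{L^2}<\beta_{cr}$: for this I would invoke the size-biasing / fractional-moment arguments (Birkner, Birkner--Greven--den~Hollander and subsequent work) which show that $W_\infty^\beta>0$ persists on an interval strictly beyond the $L^2$ threshold.

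For the $d\in\{1,2\}$ half of (i) and for (iv) the tool is the path-localization dichotomy of Carmona--Hu and Comets--Shiga--Yoshida. The mechanism: expanding the one-step ratio $W_{n+1}^\beta/W_n^\beta$, the Doob decomposition of $\log W_n^\beta$ has conditional increments whose size is controlled, above and below, by $I_n$ together with moments of $\omega$; since $(W_n^\beta)_n$ converges almost surely, a second-moment/Borel--Cantelli argument forces $\sum_n I_n<\infty$ on $\{W_\infty^\beta>0\}$, while a complementary estimate upgrades $\{W_\infty^\beta=0\}$ to $\limsup_n I_n\ge c$, which is (iv). In $d\le 2$ the difference walk is recurrent, so the annealed average of $I_n$ does not decay and $I_n$ cannot be summable, forcing \eqref{eq:SD} for every $\beta>0$; this is the $d\le 2$ assertion in (i).

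Finally, in (iii) the uniform integrability of $(W_n^\beta)_n$ throughout the weak disorder phase is the least classical input, and I would cite Junk's characterization of weak disorder in bounded environments. Granting it, the CLT in probability for the endpoint follows the Comets--Yoshida scheme: one estimates the two-point function $\E[\mu^\beta_{\omega,n}(X_n=x)\,\mu^\beta_{\omega,n}(X_n=y)]$ and compares $\mu^\beta_{\omega,n}(X_n=\cdot)$ with its annealed counterpart, the passage from the $L^2$-phase version of this estimate to the full weak-disorder phase being precisely where uniform integrability replaces $L^2$-boundedness. I therefore expect the only real obstacles, in a fully self-contained treatment, to be these two deeper inputs — the strict gap $\beta_{cr}^{L^2}<\beta_{cr}$ and uniform integrability under \eqref{eq:WD} — with everything else reducing to standard martingale and random-walk computations.
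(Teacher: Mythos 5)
The paper itself gives no proof of Theorem~\ref{thmx:phase}: parts (i) and (iii) are cited to \cite{CY06}, the $L^2$ threshold to \cite{B89,IS88}, the strict gap $\beta_{cr}^{L^2}<\beta_{cr}$ to the references collected in \cite[Remark~5.2]{C17}, and part (iv) to \cite{CH06,Y10}. Your proposal is therefore more of an expository sketch than the text actually supplies, and its broad architecture --- second moment for (ii), monotonicity for the transition point, the overlap dichotomy behind (i)/(iv), and the endpoint CLT for (iii) --- does match the underlying literature. Two items, however, are wrong or incomplete as written.

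First, the uniform integrability in (iii) does not require a bounded environment, and you should not cite Junk's bounded-environment characterization for it: in this paper that result is precisely Theorem~\ref{thmx:moments}, a strictly stronger conclusion ($\p(\beta)>1$, i.e.\ $L^p$-boundedness for some $p>1$) proved under the additional hypothesis \eqref{eq:bounded}. UI under \eqref{eq:WD} alone is classical and elementary: letting $m\to\infty$ in the finite sum $W_{n+m}=\sum_x W_n[\ind_{X_n=x}]\,W_m\circ\theta_{n,x}$ and conditioning on $\F_n$ gives $\E[W_\infty\mid\F_n]=c\,W_n$ with $c=\E[W_\infty]$, hence $W_\infty=cW_\infty$ a.s., and on $\{W_\infty>0\}$ this forces $c=1$. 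Attributing UI to the bounded-environment theorem would artificially narrow the scope of (iii).

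Second, the $d\le 2$ half of (i) is not established by the chain ``recurrence of $X-Y$ $\Rightarrow$ $\E[I_n]$ does not decay $\Rightarrow$ $\sum_n I_n=\infty$ a.s.\ $\Rightarrow$ strong disorder.'' The overlap $I_n$ is computed under the random polymer measure, not the simple random walk, so recurrence of $X-Y$ gives no immediate control of $\E[I_n]$; and $\E[I_n]\not\to 0$ does not imply $\sum_n I_n=\infty$ almost surely (the random quantity could vanish except on rare events). The rigorous route to $\beta_{cr}=0$ in $d\le 2$ is a genuinely separate argument (fractional moment or coarse-graining estimates), not a corollary of the recurrence heuristic. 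Relatedly, your monotonicity step only shows $\beta_{cr}>0$ in $d\ge 3$; the finiteness $\beta_{cr}<\infty$ also needs an additional ingredient, which your sketch omits.
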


Parts (i) and (iii) are proved  in \cite{CY06}. The $L^2$-phase $[0,\beta_{cr}^{L^2})$ was introduced in \cite{B89,IS88}. The inequality $\beta_{cr}^{L^2}\leq \beta_{cr}$ is clear, whereas the proof for the strict inequality was given in a number of papers, see \cite[Remark 5.2]{C17} for the precise references. Finally, part (iv) was first proved in a Brownian environment in \cite{CH06} and later extended to \emph{stochastic linear evolution} in \cite{Y10}, which generalizes the current setting.

\smallskip An important characteristic of the model will be the critical exponent,
\begin{align}\label{eq:def_p}
\p(\beta)\coloneqq\sup\left\{p\geq 1:(W_n^\beta)_{n\in\N}\text{ is }L^p\text{ bounded}\right\}.
\end{align}
Clearly $\p(\beta)=1$ in strong disorder and $\p(\beta)\geq 2$ in $L^2$-weak disorder, $\beta<\beta_{cr}^{L^2}$. Beyond that, we note that even though $(W_n^\beta)_{n\in\N}$ is uniform integrability in weak disorder by Theorem~\ref{thmx:phase}(iii), it may still be the case that $\p(\beta)=1$. The following extra assumption guarantees that this does not occur: we say that the environment is upper bounded if 
\begin{align}\label{eq:bounded}\tag{U-bd.}
\P\big(\omega_{t,x}\in(-\infty, K]\big)=1\qquad\text{ for some }K>0.
\end{align}
The necessity of this assumption for our result is discussed in the beginning of Section~\ref{sec:limitations}. 
\begin{thmx}\label{thmx:moments}
	Assume \eqref{eq:bounded}. Then \eqref{eq:WD} implies $\p(\beta)>1$. Moreover for $\beta>\beta_{cr}^{L^2}$ and any $t>1$, it holds that
\begin{align}\label{eq:tail}
\P\left({\textstyle \sup_n W_n^\beta>t}\right)\geq \oldconstant{c:tail}t^{-\p},
\end{align}
where $\newconstant\label{c:tail}\coloneqq e^{-2\beta K}/2$.
\end{thmx}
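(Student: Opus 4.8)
The plan is to prove the two assertions separately, starting with the quantitative tail bound $\eqref{eq:tail}$, from which the qualitative statement $\p(\beta) > 1$ will follow once we also handle the regime $\beta \le \beta_{cr}^{L^2}$ (where it is immediate since $\p(\beta) \ge 2$). For the tail bound, the key idea is a change-of-measure / Paley–Zygmund–type argument exploiting the structure of the martingale. Fix $\beta > \beta_{cr}^{L^2}$, so by definition $\p = \p(\beta) \in [1, 2)$ and $(W_n^\beta)$ is not $L^{\p+\eps}$-bounded for any $\eps > 0$. The first step is to pass from this failure of $L^p$-boundedness to a lower bound on the tail of $\sup_n W_n^\beta$: since the martingale is nonnegative and converges a.s.\ to $W_\infty^\beta$, Doob's maximal inequality controls $\sup_n W_n$ in $L^q$ for $q > 1$, so $\sup_n W_n \in L^q$ for all $q < \p$ but, by a converse argument, $\sup_n W_n \notin L^{\p}$ (or at least the tail cannot decay faster than $t^{-\p}$). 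Concretely, if $\P(\sup_n W_n > t) = o(t^{-\p})$, then $\sup_n W_n \in L^{\p}$, hence $W_\infty \in L^{\p}$; I would then argue that $L^{\p}$-integrability of the limit, together with $\eqref{eq:bounded}$, forces $L^{\p+\eps}$-boundedness of $(W_n^\beta)$ for some small $\eps > 0$, contradicting the definition of $\p$. This last implication is where the upper-bound hypothesis $\eqref{eq:bounded}$ enters crucially.

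The heart of the matter is therefore the implication ``$W_\infty^\beta \in L^{\p}$ $\Rightarrow$ $(W_n^\beta)$ is $L^{\p+\eps}$-bounded,'' i.e.\ a self-improvement of integrability. Here I would use the recursive/multiplicative structure of the partition function: conditioning on $\F_1$, one has the decomposition
\begin{align*}
W_n^\beta = e^{\beta\omega_{1,0} - \lambda(\beta)} \sum_{y \sim 0} \frac{1}{2d}\, W_{[2,n]}^{\beta,(1,y)},
\end{align*}
where the $W_{[2,n]}^{\beta,(1,y)}$ are shifted copies of the martingale, independent given $\F_1$. Under $\eqref{eq:bounded}$ the prefactor $e^{\beta\omega_{1,0} - \lambda(\beta)}$ is bounded above by $e^{\beta K - \lambda(\beta)}$, and more usefully one can iterate this over the first $m$ time steps to write $W_n^\beta$ as a bounded-coefficient average of shifted martingales indexed by walk positions at time $m$. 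Taking $m$ large and using that the limit $W_\infty^\beta$ is in $L^{\p}$, a Burkholder–Davis–Gundy or direct convexity estimate on the martingale increments should yield that the $L^{\p}$-norm is in fact controlled with room to spare, upgrading to $L^{\p + \eps}$. The constant $\oldconstant{c:tail} = e^{-2\beta K}/2$ strongly suggests that the explicit mechanism is: on the event that the single weight $\omega_{1,0}$ is large (probability comparable to the relevant tail), $W_n^\beta$ inherits a boost by a factor $e^{\beta\omega_{1,0} - \lambda(\beta)} \ge e^{-\lambda(\beta)}\cdot e^{\beta\omega_{1,0}}$, and comparing with an unboosted independent copy via a union/Paley–Zygmund bound produces the factor $\tfrac12$ and one power of $e^{-\beta K}$, with the second power of $e^{-\beta K}$ coming from normalizing $\lambda(\beta)$ against the maximal weight $K$.

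More precisely, I expect the cleanest route to $\eqref{eq:tail}$ directly is the following. Let $p > \p$; then $(W_n^\beta)$ is not $L^p$-bounded, so by Doob $\E[(\sup_n W_n^\beta)^p] = \infty$, whence $\int_1^\infty p t^{p-1}\P(\sup_n W_n^\beta > t)\,\dd t = \infty$, which already gives $\limsup_{t\to\infty} t^{\p}\P(\sup_n W_n^\beta > t) > 0$ but not the uniform bound with an explicit constant. To get the bound for \emph{all} $t > 1$ with constant $\oldconstant{c:tail}$, I would run a bootstrapping argument: suppose $\P(\sup_n W_n^\beta > t_0) < \oldconstant{c:tail} t_0^{-\p}$ for some $t_0 > 1$; using the one-step recursion above, relate $\P(\sup_n W_n^\beta > e^{\beta\omega_{1,0} - \lambda(\beta)}\, t_0 / C)$ on the event $\{\omega_{1,0} \text{ large}\}$ to the same probability at a smaller threshold, and derive a contradiction with the non-$L^{\p}$-boundedness by summing the resulting geometric improvement over scales. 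The main obstacle, I anticipate, is making this self-improvement rigorous: the shifted martingales $W_{[2,n]}^{\beta,(1,y)}$ are not identically distributed copies of $W_n^\beta$ at the same index (they are supported on a shifted lattice and have one fewer step), so some care is needed to compare their tails with that of the original martingale uniformly in $n$; and one must ensure the convexity/triangle-inequality losses in passing between $L^{\p}$ and $L^{\p+\eps}$ do not overwhelm the gain. A secondary technical point is verifying that $\p(\beta) < \infty$ when $\beta > \beta_{cr}^{L^2}$ (so that ``$\p$'' in the exponent is a genuine finite number), which should follow from $L^2$-unboundedness together with $\eqref{eq:bounded}$ by a similar but easier argument, or can be cited.
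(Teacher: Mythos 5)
The paper's proof is far shorter than your sketch: both $\p(\beta)>1$ and the crucial fact $\sup_n\E[W_n^{\p}]=\infty$ are cited directly from \cite[Theorem~1.1(ii)]{J21_1}, and \eqref{eq:tail} then follows from a one-line rearrangement of the self-bounding inequality (display~$(20)$ of the same reference)
\begin{align*}
\E\big[(W_n^\beta)^{\p}\big]\leq t^{\p}+(te^{\beta K})^{\p}\,\P\big({\textstyle\sup_{k\leq n}}W_k^\beta>t\big)\,\E\big[(W_n^\beta)^{\p}\big].
\end{align*}
Indeed, if $\P(\sup_n W_n>t)<\oldconstant{c:tail}t^{-\p}$ for some $t>1$, then the coefficient of $\E[(W_n^\beta)^{\p}]$ on the right is at most $e^{\beta K(\p-2)}/2\leq 1/2$ (using $\p\leq 2$), so $\E[(W_n^\beta)^{\p}]\leq 2t^{\p}$ uniformly in $n$, contradicting the divergence.

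Your proposal identifies the right heuristic --- use \eqref{eq:bounded} to control multiplicative overshoots and trade a failed moment bound for a tail lower bound --- but several steps would not go through. The implication ``$\P(\sup_n W_n>t)=o(t^{-\p})$ forces $\sup_n W_n\in L^{\p}$'' is false (a tail $\sim t^{-\p}/\log t$ is a counterexample), and in any case the negation of \eqref{eq:tail} is failure at a single $t$, not asymptotic smallness. The contradiction must come from $\sup_n\E[W_n^{\p}]=\infty$, which is strictly more than the definition of $\p$ tells you --- the definition is silent about behavior at the critical exponent itself. The ``self-improvement'' lemma $W_\infty\in L^{\p}\Rightarrow(W_n)$ is $L^{\p+\eps}$-bounded, which you flag as the heart of the matter, is neither the statement that is needed nor obviously provable; the correct tool is the displayed one-sided inequality, whose proof decomposes at the first overshoot time $\tau=\inf\{k:W_k>t\}$, uses \eqref{eq:bounded} to get $W_\tau\leq e^{\beta K}t$, and restarts the martingale from $\tau$. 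Finally, your account of the constant $e^{-2\beta K}/2$ is off: both factors of $e^{-\beta K}$ come from the bound $(te^{\beta K})^{\p}\leq t^{\p}e^{2\beta K}$ (using $\p\leq 2$), and the $\tfrac12$ is arithmetic slack, not a Paley--Zygmund payoff. The Paley--Zygmund, BDG, and bootstrapping-over-scales ideas you mention can all be dropped.
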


Theorem~\ref{thmx:moments} follows from \cite[Theorem~2.1]{J21_1}. The proof can be found in the appendix.

\subsection{Main result and related literature}\label{sec:result}

For $x\in\Z^d$, we define the partition function started at time $t$ from $x$ in the time interval $(s,t]$ by
\begin{align}\label{eq:def_Wtx}
	W_t^{s,x}\coloneqq W_{t-s}^\beta\circ\theta_{s,x},
\end{align}
where $\theta_{t,x}$ denotes the space-time shift acting on the environment. For a compactly supported function $f\colon\R^d\to\R$, we consider 
\begin{align*}
\X^f_n\coloneqq n^{-d/2}\sum_{x\in\Z^d}f(x/\sqrt n)\big(W_n^{0,x}-1\big).
\end{align*}
We recall some results about homogenization of $(W_n^{0,x})_{x\in\Z^d}$ in weak disorder. 
\begin{thmx}\label{thmx:ew}
Assume $d\geq 3$ and \eqref{eq:exp_mom}.
\begin{itemize}
 \item [(i)]Assume \eqref{eq:WD}. For every continuous, compactly  supported $f\colon\R^d\to\R$,
 \begin{align}\label{eq:decay}
\X_n^f\xrightarrow[n\to\infty]{L^1}0.
\end{align}
\item[(ii)] Assume $\beta<\beta_{cr}^{L^2}$. For every continuous, compactly  supported $f\colon\R^d\to\R$,
 \begin{align}\label{eq:CNN}
n^{\frac{d-2}4}\X_n^f\xrightarrow[n\to\infty]d\int_{\R^d}f(x)\GFF^{\gamma(\beta)}(x)\dd x,
\end{align}
where $(\GFF^\gamma(x))_{x\in\R^d}$ is the Gaussian free field of intensity $\gamma$ and
\begin{align*}
\gamma(\beta)\coloneqq \left(e^{\lambda(2\beta)-2\lambda(\beta)}-1\right)\E\left[(W_\infty^\beta)^2\right].
\end{align*}
\end{itemize}
\end{thmx}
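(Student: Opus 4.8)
For part (i), the plan is to reduce \eqref{eq:decay} to a spatial ergodic theorem. First decompose
\begin{align*}
\X_n^f = n^{-d/2}\sum_{x\in\Z^d}f(x/\sqrt n)\big(W_n^{0,x}-W_\infty^{0,x}\big) + n^{-d/2}\sum_{x\in\Z^d}f(x/\sqrt n)\big(W_\infty^{0,x}-1\big).
\end{align*}
Since the $W_n^{0,x}$ are identically distributed in $x$ and $(W_n^\beta)_n$ is uniformly integrable under \eqref{eq:WD} by Theorem~\ref{thmx:phase}(iii), one has $\E|W_n^{0,0}-W_\infty^{0,0}|\to0$; bounding the $L^1$-norm of the first sum by $\|f\|_\infty\,n^{-d/2}\,|\{x:f(x/\sqrt n)\neq0\}|\cdot\E|W_n^{0,0}-W_\infty^{0,0}|$ shows it tends to $0$ in $L^1$. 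For the second sum, $(W_\infty^{0,x})_{x\in\Z^d}$ is a shift-commuting measurable factor of the i.i.d.\ environment, hence a stationary ergodic field under $\Z^d$-translations with mean $1$; the multidimensional ergodic theorem applied along the dilated cubes $\sqrt n\,Q$, together with the standard approximation of continuous $f$ by step functions (the error being controlled by the ergodic average of $|W_\infty^{0,0}-1|\in L^1$), shows that the second sum tends to $0$ almost surely. Finally, this second sum is uniformly integrable --- it is dominated by a bounded multiple of an average of members of the uniformly integrable family $\{W_\infty^{0,x}\}_x$, and convex combinations of a uniformly integrable family are again uniformly integrable --- so Vitali's theorem upgrades its a.s.\ convergence to $L^1$, and \eqref{eq:decay} follows. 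The only mildly delicate point is making the scaled ergodic theorem work along the non-integer dilations $\sqrt n$.

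For part (ii), the plan is the method of polynomial chaos together with the fourth-moment theorem. Write $e_{j,y}\coloneqq e^{\beta\omega_{j,y}-\lambda(\beta)}-1$ (i.i.d., mean zero, variance $\sigma^2\coloneqq e^{\lambda(2\beta)-2\lambda(\beta)}-1$); expanding $W_n^{0,x}=\ESRW_x\big[\prod_{j\leq n}(1+e_{j,X_j})\big]$ gives
\begin{align*}
W_n^{0,x}-1=\sum_{k\geq1}\ \sum_{0<j_1<\dots<j_k\leq n}\ \sum_{y_1,\dots,y_k}\PSRW_x(X_{j_1}=y_1,\dots,X_{j_k}=y_k)\prod_{i=1}^{k}e_{j_i,y_i},
\end{align*}
so $\X_n^f=\sum_{k\geq1}\Psi_k^{(n)}$ with $\Psi_k^{(n)}$ the order-$k$ term. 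I would first compute the second moments: only the ``diagonal'' pairing of the two replicas survives, and summing out the $y_i$ (using $\sum_y p_m(a,y)p_m(b,y)=p_{2m}(a,b)$ for the SRW kernel $p$) gives
\begin{align*}
\E\big[(\Psi_k^{(n)})^2\big]=\sigma^{2k}\,n^{-d}\sum_{x,x'}f(x/\sqrt n)f(x'/\sqrt n)\!\!\sum_{0<j_1<\dots<j_k\leq n}\!\! p^Z_{j_1}(x-x',0)\prod_{i=2}^{k}p^Z_{j_i-j_{i-1}}(0,0),
\end{align*}
where $Z=X-X'$ is the difference walk and $p^Z_j(z,0)=p_{2j}(0,z)$. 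Since $d\geq3$ the decorating gaps $j_i-j_{i-1}$ ($i\geq2$) are $O(1)$ and decouple, contributing a factor $\rho^{k-1}$ with $\rho\coloneqq\sum_{m\geq1}p^Z_m(0,0)=G_Z(0,0)-1<\infty$, while $\sum_{j\leq n}p^Z_{j}(x-x',0)\to G_Z(x-x',0)\sim c_d|x-x'|^{-(d-2)}$; a Riemann-sum argument then yields $n^{(d-2)/2}\,\E[(\Psi_k^{(n)})^2]\to a_k^2\kappa$ with $a_k^2=\sigma^{2k}\rho^{k-1}$ and $\kappa=c_d\iint f(u)f(v)|u-v|^{-(d-2)}\,du\,dv$. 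The hypothesis $\beta<\beta_{cr}^{L^2}$ is exactly what makes $\sigma^2\rho<1$, so that $\sum_{k\geq1}a_k^2=\sigma^2/(1-\sigma^2\rho)=\sigma^2\,\E[(W_\infty^\beta)^2]$ and the total limiting variance equals $\gamma(\beta)\kappa$, which --- once $c_d$ is fixed by the local central limit theorem, matching the normalization of $\GFF$ --- is precisely the variance of $\int f\,\GFF^{\gamma(\beta)}$; running the same computation with $(f,f)$ replaced by $(f,g)$ identifies the full limiting covariance.

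It then remains to prove asymptotic Gaussianity. For fixed $k$, $\Psi_k^{(n)}$ is an order-$k$ polynomial chaos in $\{e_{j,y}\}$ in which each single variable has asymptotically vanishing influence, so by the discrete fourth-moment theorem it suffices to check $n^{d-2}\,\E[(\Psi_k^{(n)})^4]\to3(a_k^2\kappa)^2$; together with the vanishing of the relevant cross-contractions this also gives joint convergence of $n^{(d-2)/4}(\Psi_1^{(n)},\dots,\Psi_K^{(n)})$ to \emph{independent} centred Gaussians with variances $a_k^2\kappa$. Combined with the uniform tail bound $\limsup_n\sum_{k>K}n^{(d-2)/2}\,\E[(\Psi_k^{(n)})^2]\leq\kappa\sum_{k>K}a_k^2\to0$, this yields $n^{(d-2)/4}\X_n^f$ converging in distribution to $\mathcal N(0,\gamma(\beta)\kappa)$, i.e.\ to $\int f\,\GFF^{\gamma(\beta)}$. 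I expect this fourth-moment (equivalently, fourth-cumulant) estimate to be the main obstacle: it is a four-replica computation, and because $(W_n^\beta)_n$ is only $L^2$-bounded, not $L^4$-bounded, one cannot control it through moments of $W_n$ directly; instead one must show that the ``connected'' four-walk configurations are of strictly smaller order than the ``two-pairings'' ones, using that the starting points are spread over scale $\sqrt n$ together with heat-kernel estimates and the finiteness of the Green-function series ensured by $\sigma^2\rho<1$.
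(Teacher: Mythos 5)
For part (i), your argument is correct but takes a genuinely different route from the paper's. The paper introduces an intermediate time $N=\lfloor n^{1/3}\rfloor$ and a truncation level $M$, writes
\begin{align*}
\X_n^f = n^{-d/2}\sum_x f(x/\sqrt n)\big(W_n^{0,x}-W_N^{0,x}\big) + n^{-d/2}\sum_x f(x/\sqrt n)\big(W_N^{0,x}\1_{W_N^{0,x}\le M}-1\big) + n^{-d/2}\sum_x f(x/\sqrt n)\,W_N^{0,x}\1_{W_N^{0,x}>M},
\end{align*}
kills the first and third pieces by uniform integrability and $L^1$-convergence of $(W_n)$, and controls the second piece in $L^2$ by exploiting that $W_N^{0,x}$ and $W_N^{0,y}$ are \emph{exactly independent} once $|x-y|>2N$, which together with $N\ll\sqrt n$ makes the off-diagonal contribution vanish. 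Your route instead passes directly to $W_\infty^{0,x}$: the transition $W_n^{0,x}\to W_\infty^{0,x}$ is handled termwise by $L^1$-convergence (valid since $n^{-d/2}\sum_x|f(x/\sqrt n)|=O(1)$), and the remaining sum is treated as a spatial average of the stationary field $(W_\infty^{0,x}-1)_{x\in\Z^d}$. The ergodicity of this field under $\Z^d$-shifts is correct (the i.i.d.\ environment is mixing under spatial translations and $W_\infty^{0,\cdot}$ is a shift-equivariant factor), and the upgrade from a.s.\ to $L^1$ via Vitali is sound since the weighted averages are bounded multiples of convex combinations of the identically distributed, hence UI, family $\{W_\infty^{0,x}\}$. (In fact the $L^1$ ergodic theorem already gives mean convergence, so Vitali can be bypassed.) The two residual technicalities you flag---non-integer dilations $\sqrt n$ and passing from indicators to continuous $f$---are genuinely routine. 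What each approach buys: yours is conceptually cleaner, avoiding the auxiliary parameters $M,N$; the paper's is more elementary, needing only Cauchy--Schwarz and finite-range independence rather than ergodic theory. Both are valid.

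For part (ii), the paper does not give a proof but cites \cite{CNN20} and \cite{LZ20}. Your chaos-expansion sketch is a reasonable alternative framework, and your second-moment bookkeeping is exact: with $\rho=\pi_d/(1-\pi_d)$ the identity $\sigma^2/(1-\sigma^2\rho)=\sigma^2\,\E[(W_\infty^\beta)^2]$ holds, and $\sigma^2\rho<1$ is precisely $\beta<\beta_{cr}^{L^2}$. However, as you yourself acknowledge, the fourth-cumulant estimate needed for the fourth-moment theorem is not established and is the genuinely hard step; without it this is an outline, not a proof. Since the paper also defers (ii) to the literature, this is not a discrepancy with the paper's treatment, but the incompleteness should be noted.
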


Part (i) has been proved in a  continuous setting in \cite[Theorem~2.1]{CNN20} and for completeness we give a short proof for the discrete setting in the appendix. Part (ii) is proved in \cite[Theorem~2.5]{CNN20} in the continuous setting and in \cite[Theorem~1.1]{LZ20} in our discrete setting. We also mention the earlier works \cite{MSZ16,GRZ18,CCM19} on \eqref{eq:CNN} in the continuous setting under stronger assumptions on $\beta$.

\smallskip By Theorem~\ref{thmx:ew}(ii), the field $(W_n^{0,x})_{x\in\Z^d}$ homogenizes upon taking a spatial average on the diffusive scale and the rate of convergence is $n^{-(d-2)/4}$. The exponent is independent of $\beta$ but the intensity $\gamma(\beta)$ of the limiting Gaussian free field diverges as $\beta$ approaches $\beta_{cr}^{L^2}$, which suggests that the rate of convergence is slower in the remainder of the weak disorder phase. Our main result confirms this.

\begin{theorem}\label{thm:main}
Assume \eqref{eq:bounded}, \eqref{eq:WD} and $\beta>\beta_{cr}^{L^2}$. Let
\begin{align*}
	\xi(\beta)\coloneqq \frac d2-\frac{2+d}{2\p(\beta)}
\end{align*}
and let $f\colon\R^d\to\R$ be continuous and compactly supported with $f\not\equiv 0$. For every $\eps>0$,
\begin{align}\label{eq:main}
\lim_{n\to\infty}\P\left(n^{-\xi(\beta)-\eps}\leq |\X_n^{f}|\leq n^{-\xi(\beta)+\eps}\right)=1.
\end{align}
\end{theorem}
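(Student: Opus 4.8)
The plan is to prove the upper and lower bounds in \eqref{eq:main} separately, using the moment estimates of Theorem~\ref{thmx:moments} as the link between the critical exponent $\p(\beta)$ and the fluctuation exponent $\xi(\beta)$. The heuristic is that $\X_n^f$ is a sum of $\sim n^{d/2}$ weakly correlated terms $W_n^{0,x}-1$, each centered but with heavy tails governed by $\p$: since $\sup_n W_n^\beta$ has tail exponent $\p$, the variable $W_n^{0,x}-1$ has $L^p$-norm blowing up for $p>\p$, and the dominant contribution to $\X_n^f$ comes from a single atypically large $W_n^{0,x}$. A single term of size $t$ contributes $n^{-d/2} t$ to $\X_n^f$, and the largest of $n^{d/2}$ independent-ish samples with tail $t^{-\p}$ is of order $n^{d/(2\p)}$; but there is a spatial correlation length, so one should think of $n^{d/2}$ being replaced by the number of ``independent boxes'', and a more careful bookkeeping (tracking that a polymer from $x$ only feels the environment in a space-time cone, so the relevant decorrelation happens on scale $\sqrt n$ in space and $n$ in time) yields the exponent $\tfrac d2-\tfrac{2+d}{2\p}$. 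The extra $-\tfrac{2}{2\p}$ and the $-\tfrac{d}{2\p}$ come from the fact that a large value of $W_n^{0,x}$ typically ``costs'' having been large already at an earlier time, spreading the exceptional behavior over a space-time region of volume $n^{1+d/2}$ rather than just $n^{d/2}$ sites.

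\smallskip\textbf{Upper bound.} For the bound $|\X_n^f|\le n^{-\xi+\eps}$ with probability $\to1$, the plan is a truncation argument. Write $W_n^{0,x}-1 = (W_n^{0,x}-1)\ind_{W_n^{0,x}\le T} + (W_n^{0,x}-1)\ind_{W_n^{0,x}> T}$ with $T=T(n)$ a threshold to be optimized (polynomial in $n$). For the truncated part, I would use a second-moment computation: the truncated variables have variance controlled by $\E[(W_n^\beta)^2\wedge T]$, which by Theorem~\ref{thmx:moments} grows like $T^{2-\p}$ (integrating the tail), and after summing over the $\sim n^{d/2}$ values of $x$ and using that the covariance of $W_n^{0,x}-1$ and $W_n^{0,y}-1$ decays when $|x-y|$ is large (this decay is what is established in the proof of Theorem~\ref{thmx:ew}(i), or can be extracted from the martingale/heat-kernel structure), one gets that the truncated part of $n^{-d/2}\sum_x f(x/\sqrt n)(\cdots)$ is, with high probability, at most $n^{-d/4+o(1)}T^{(2-\p)/2}$ times a combinatorial factor from the overlaps. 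For the untruncated part, a union bound over the $\sim n^{d/2}$ sites using the tail estimate gives that $\max_x W_n^{0,x}\le n^{d/(2\p)+\eps}$ with high probability once $T$ is chosen above this scale, and then the untruncated contribution is negligible. Optimizing $T$ against these two estimates — the union-bound scale forces $T\gtrsim n^{d/(2\p)}$, while the second-moment term is decreasing in nothing and one rather balances the truncated fluctuation against the target — should reproduce $n^{-\xi+\eps}$. The honest difficulty here is getting the correlation decay quantitative enough: one needs that contributions from distant $x$ are genuinely small in $L^2$ at a polynomial rate, which requires a concentration/coupling argument for $W_n^{0,x}$ that localizes the relevant randomness to a space-time cone.

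\smallskip\textbf{Lower bound.} For $|\X_n^f|\ge n^{-\xi-\eps}$ with probability $\to1$, the plan is to show that with high probability there exists a single site $x_0$ (in the support of $f$, on the scale $\sqrt n$) with $W_n^{0,x_0}$ anomalously large, of order $n^{d/(2\p)+\text{(correction)}-\eps}$, and that this single term is not cancelled by the rest. The existence of such a site follows from a second-moment / Paley–Zygmund argument applied to the counting variable $N\coloneqq\#\{x: W_n^{0,x}>t\}$ for $t$ slightly below the scale $n^{d/(2\p)}$: by Theorem~\ref{thmx:moments} the first moment $\E[N]\ge c\,n^{d/2}t^{-\p}$ is large, and one needs an upper bound on $\E[N^2]=\sum_{x,y}\P(W_n^{0,x}>t,W_n^{0,y}>t)$ showing that $\E[N^2]\lesssim (\E[N])^2$ plus lower-order terms; the cross-terms are where the correlation structure enters, and for $x,y$ close (within the decorrelation scale) one uses the trivial bound $\P(W_n^{0,x}>t,W_n^{0,y}>t)\le \P(W_n^{0,x}>t)$ but must check this cluster contributes lower order — this is precisely where the true exponent $\xi$ (as opposed to the naive $d/2-d/(2\p)$) is pinned down, because the correlation length of the event $\{W_n>t\}$ is itself $t$-dependent (a large value propagates backward in time). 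Having produced $x_0$, I would condition on the environment outside a small space-time neighborhood of the cone over $x_0$ and argue that the remaining sum $\X_n^f - n^{-d/2}f(x_0/\sqrt n)(W_n^{0,x_0}-1)$ is, conditionally, of smaller order (again by the upper-bound estimates applied to this residual, or by noting it has the conditional law of a polymer average and is $o(n^{-\xi-\eps})$ by part~(i)-type reasoning). I expect the \textbf{main obstacle} to be exactly this correlation-length analysis: controlling $\P(W_n^{0,x}>t, W_n^{0,y}>t)$ for $x,y$ at intermediate distances, and in particular identifying that a large value of $W_n$ at scale $t$ typically originates from the polymer having concentrated on an exceptional sub-path, so that the "exceptional" region has the space-time volume $n^{1+d/2}$ that produces the $\tfrac{2+d}{2\p}$ in $\xi(\beta)$ rather than the dimensional count $\tfrac{d}{2\p}$. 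This likely uses the Doob decomposition of $W_n^{0,x}$, the upper bound \eqref{eq:bounded} to control one-step increments, and an entropy/energy balance for when the polymer measure localizes.
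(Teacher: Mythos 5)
Your heuristic — that the "atoms" of the problem are the $n^{1+d/2}$ space-time sites rather than the $n^{d/2}$ spatial sites, and that this is what produces the $2+d$ in $\xi(\beta)$ — is exactly right, and you correctly flag that a naive spatial bookkeeping would only give the exponent $\tfrac d2-\tfrac{d}{2\p}$. But the concrete arguments you propose do not realize that heuristic, and there are three specific gaps.

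\textbf{(1) The object you should be tracking is the backward partition function, not $W_n^{0,x}$.} Both of your arguments work with the field $(W_n^{0,x})_x$ indexed by spatial sites, truncated or thresholded at some polynomial scale $t$. Running the arithmetic for the upper bound (truncated variance $\approx T^{2-\p}$ summed over $\sim n^{d/2}$ sites with $T\approx n^{d/(2\p)}$) or the lower bound (first moment of $\#\{x:W_n^{0,x}>t\}$) gives the naive exponent, and you acknowledge this without supplying a mechanism that corrects it. The paper's key step is to pass to the martingale $M_{n,k}^f$ and its quadratic variation, which equals (up to a constant) $n^{-d}\sum_{(t,x)\in[1,n]\times\Z^d}\bigl(\cev W^{t,x}_1[f(X_0/\sqrt n)]\bigr)^2$; this identity (Proposition~\ref{prop:corrector}) is what converts the problem into a sum over $n^{1+d/2}$ space-time points of the squared backward partition function, and from there the level decomposition directly gives $\langle M^f_{n,\cdot}\rangle_n\lessapprox n^{-d+(2+d)/\p}$. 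Without such an identity, you would have to establish quantitative correlation decay for $(W_n^{0,x}-1)_x$ across all intermediate distances, which you flag as the "honest difficulty" but which the paper sidesteps entirely.

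\textbf{(2) "One big site and the rest is smaller" does not yield the lower bound.} You propose to find a single $x_0$ with $W_n^{0,x_0}$ anomalously large, subtract that term, and argue the residual is smaller. This is precisely the failure mode the paper highlights at the start of Section~\ref{sec:proof_lower}: a martingale can have arbitrarily large excursions while its terminal value is steered back (the paper cites concrete counterexamples). Conditional on $W_n^{0,x_0}$ being large, the nearby $W_n^{0,y}$ are also shifted and the residual is not conditionally of smaller order in any obvious way. The paper instead proves a conditional-independence decomposition $\X_n^f=A+B+\sum_i Z_i$, where each $Z_i$ is only the contribution to $\X_n^f$ from paths visiting a specific exceptional site $(T_i,Y_i)$ (so modifying $\omega_{T_i,Y_i}$ perturbs only $Z_i$), and then applies Rogozin's anti-concentration inequality to the conditionally independent $Z_i$. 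A second-moment/Paley--Zygmund count of threshold exceedances, by itself, does not produce this structure.

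\textbf{(3) Your proposal silently conflates $\p$ with $\q$.} If you were to push your lower bound through, the existence of exceptional sites on a time-scale $\ell_n\ll n$ requires a lower bound on $\P(W_{\ell_n}\ge e^{x\ell_n})$, which is a large-deviation lower bound governed by the exponential moment rate $\a^\beta(p)=\lim_n\frac1n\log\E[W_n^p]$ and its critical index $\q=\inf\{p:\a(p)>0\}$, not by the $L^p$-boundedness index $\p$ from Theorem~\ref{thmx:moments}. A priori $\p\le\q$, and the two bounds in Theorem~\ref{thm:upper_lower} come out with different exponents ($\p$ in the upper, $\q$ in the lower). Closing that gap is the content of Theorem~\ref{thm:pq}, proved via a separate argument about strong localization conditional on $W_n$ being large, which uses \eqref{eq:bounded} and $\beta>\beta_{cr}^{L^2}$ in an essential way. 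Your proposal never encounters this issue because it only ever references $\p$, but that is because it never works at a short time-scale, which is where the correct exponent is produced; once you fix gap (1), you will run into gap (3).
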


\begin{remark}
 From Proposition~\ref{prop:a}(iv), we see that $\p(\beta)<2$ under the assumptions of Theorem~\ref{thm:main} and therefore $\xi(\beta)<\frac{d-2}4$. On the other hand, it may be the case that $\xi(\beta)=0$.

\end{remark}

We now give a heuristic explanation for the change in the exponent outside of the $L^2$-phase. As will become clear in the next section, the correlation between $W_n^{0,x}$ and $W_n^{0,y}$ is proportional to the probability that the polymers from $x$ and $y$ meet before time $n$. If we presume that a so-called local limit theorem is valid, then the probability of meeting in a space-time point $(t,z)$ should be comparable to $n^{-d}(\widehat{W}^{t,z}_\infty)^2$, where $n^{-d}$ comes from the hitting probability under the simple random walk and $(\widehat{W}^{t,z}_\infty)_{t,z\in [0,n]\times[-n^{1/2},n^{1/2}]^d}$ is a family of independent copies of $W_\infty$. The latter contribution encodes the effect of the environment around the common endpoint $(t,z)$. Thus, the correlation can be approximated as  
\begin{align*}
	|W^{0,x}W^{0,y}-1|\approx n^{-d}\textstyle{\sum_{t,z\in [0,n]\times[-n^{1/2},n^{1/2}]^d}}(\widehat{W}^{t,z}_\infty)^2,
\end{align*}
see also \eqref{eq:corrector_approx} below. The difference between the $L^2$-regime and the remainder of the weak disorder phase is whether this sum satisfies a law of large numbers or not. In the first case, the correlation is comparable to the number of summands (which recovers the exponent from Theorem~\ref{thmx:ew}(ii)), otherwise the sum is dominated by a few large terms, which corresponds to the existence of space-time sites  whose hitting probability is $\gg n^{-d/2}$.  Note that the local limit theorem, which we used to justify this approximation, is only known for the $L^2$-phase, see \cite{S95,V06} for the precise statement.

\smallskip We also record the following consequence of Theorem~\ref{thm:main}.

\begin{corollary}\label{cor:p}
Assume \eqref{eq:bounded} and \eqref{eq:WD}. Then $\p(\beta)\geq 1+\frac{2}d$. In particular, $\beta\mapsto\p(\beta)$ is discontinuous at $\beta_{cr}$.
\end{corollary}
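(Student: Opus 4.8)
The plan is to derive the bound on $\p$ by playing Theorem~\ref{thm:main} against the $L^1$-homogenization of Theorem~\ref{thmx:ew}(i): the former says $|\X_n^{f}|$ behaves like $n^{-\xi(\beta)+o(1)}$, the latter forces $\X_n^f\to0$, and the two are compatible only if $\xi(\beta)\geq0$. Concretely, fix a continuous, compactly supported $f\colon\R^d\to\R$ with $f\not\equiv0$, and consider first $\beta\in(\beta_{cr}^{L^2},\beta_{cr})$, so that \eqref{eq:WD} holds and Theorem~\ref{thm:main} applies. Suppose, for contradiction, that $\p(\beta)<1+\tfrac2d=\tfrac{2+d}{d}$; this is precisely equivalent to $\xi(\beta)=\tfrac d2-\tfrac{2+d}{2\p(\beta)}<0$. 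Applying Theorem~\ref{thm:main} with $\eps\coloneqq-\tfrac12\xi(\beta)>0$ yields $\P\big(|\X_n^{f}|\geq n^{-\xi(\beta)/2}\big)\to1$, and since $-\xi(\beta)/2>0$ this means $|\X_n^f|\to\infty$ in probability. But Theorem~\ref{thmx:ew}(i) gives $\X_n^f\to0$ in $L^1$, hence in probability, a contradiction. Therefore $\xi(\beta)\geq0$, i.e.\ $\p(\beta)\geq1+\tfrac2d$, for all $\beta\in(\beta_{cr}^{L^2},\beta_{cr})$.

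Next I would cover the remaining $\beta$ in the weak disorder phase (recall $\beta_{cr}^{L^2}<\beta_{cr}$). For $\beta<\beta_{cr}^{L^2}$ the martingale is $L^2$-bounded by Theorem~\ref{thmx:phase}(ii), so $\p(\beta)\geq2>1+\tfrac2d$ because $d\geq3$; the boundary value $\beta=\beta_{cr}^{L^2}$ is then handled by monotonicity of $\beta\mapsto\p(\beta)$, which combined with the previous paragraph gives $\p(\beta_{cr}^{L^2})\geq\sup_{\beta\in(\beta_{cr}^{L^2},\beta_{cr})}\p(\beta)\geq1+\tfrac2d$. This establishes the first assertion. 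For the second, observe that if $\beta>\beta_{cr}$ then \eqref{eq:SD} holds, so $W_\infty^\beta=0$ a.s.; hence $(W_n^\beta)_{n\in\N}$ is not uniformly integrable (otherwise $\E[W_\infty^\beta]=\lim_n\E[W_n^\beta]=1$) and a fortiori not $L^p$-bounded for any $p>1$, so $\p(\beta)=1$. Since $\p\geq1+\tfrac2d>1$ on the left-neighbourhood $(\beta_{cr}^{L^2},\beta_{cr})$ of $\beta_{cr}$, the one-sided limits of $\p$ at $\beta_{cr}$ disagree, so $\p$ is discontinuous at $\beta_{cr}$.

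The argument is short and essentially bookkeeping: the meat is the contradiction between the quantitative lower bound of Theorem~\ref{thm:main} and the qualitative upper bound of Theorem~\ref{thmx:ew}(i). The only place that needs any care is the single boundary value $\beta=\beta_{cr}^{L^2}$, where Theorem~\ref{thm:main} is unavailable and one must instead invoke monotonicity (or right-continuity) of $\p$; I do not expect a substantive obstacle beyond quoting that property in the appropriate form.
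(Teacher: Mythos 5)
Your proof follows the paper's argument exactly for the central case $\beta>\beta_{cr}^{L^2}$: the corollary is obtained by pitting the $L^1$-homogenization of Theorem~\ref{thmx:ew}(i) against the two-sided bound of Theorem~\ref{thm:main} to force $\xi(\beta)\geq0$, and the discontinuity at $\beta_{cr}$ comes from the trivial observation that strong disorder destroys uniform integrability and hence forces $\p(\beta)=1$. The one place where you deviate is the boundary point $\beta=\beta_{cr}^{L^2}$: you propose to invoke monotonicity of $\beta\mapsto\p(\beta)$, but that monotonicity is neither used nor established in the paper, and it is not an obviously free fact (the paper goes to some length to prove the related identity $\p=\q$ only for $\beta>\beta_{cr}^{L^2}$, and explicitly flags in Remark~\ref{rm:critical} that its technique degenerates as $\beta\downarrow\beta_{cr}^{L^2}$). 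The paper handles the boundary differently: Section~\ref{sec:limitations} cites \cite{BT10,BS10} to get $\sup_n\E[(W_n^{\beta_{cr}^{L^2}})^{2-\eps}]<\infty$ for small $\eps$, hence $\p(\beta_{cr}^{L^2})=2>1+\tfrac2d$ directly. That is a cleaner route than appealing to an unproved monotonicity; if you wish to keep your structure, you should replace the monotonicity step by this citation. Everything else in your write-up is correct and matches the paper's (much terser) reasoning.
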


\begin{proof}
By Theorem~\ref{thmx:ew}(i) and Theorem~\ref{thm:main}, we must have $\xi(\beta)\geq 0$ in weak disorder and thus $\p(\beta)\geq 1+\frac 2d$. On the other hand, it is clear that $\p(\beta)=1$ in strong disorder.
\end{proof}

Corollary~\ref{cor:p} seems surprising at first sight, but in fact the same value turns out to be critical for a related polymer model, called \emph{directed polymers in $\gamma$-stable random environment}. We now briefly explain this connection. 

\smallskip The model can be defined similarly to our setup, but instead of \eqref{eq:exp_mom} one assumes that $\E[e^{\beta\omega_{t,x}}]=\infty$ for some finite $\beta>0$. It is then convenient to re-parametrize the model and replace the exponential weight $e^{\beta H_n(\omega,\pi)}$ of a path $\pi$ by $\prod_{t=1}^n (1+\beta \eta_{t,\pi(t)})$, where $\beta\in[0,1]$ and where the random environment $(\eta_{t,x})_{t,x}$ is centered, supported on $(-1,\infty)$ and satisfies $\P(\eta_{0,0}\geq u)\sim u^{-\gamma}$ as $u\to\infty$, for some $\gamma>0$. This representation actually goes back to the earliest works on directed polymers but has recently reappeared in \cite{V21}, where it was shown that the model exhibits a non-trivial phase transition in $\beta$ if and only if $\gamma>1+\frac 2d$, see \cite[Theorems 1.4--1.6]{V21}.

\smallskip It is interesting to note that the lower bound from Corollary~\ref{cor:p} matches the critical value from for heavy-tailed environment even though our environment is bounded. One can guess that the heavy-tailed model can be recovered from our model by a rescaling argument. That is, we consider the simple random walk evaluated at times $0,\ell_n,2\ell_n,\dots,n/\ell_n$ for some suitable time-scale $\ell_n$ and assign to a path $X_0,X_{\ell_n},\dots,X_{n/\ell_n}$ a weight $\approx\prod_{i=1}^{n/\ell_n} W^{i\ell_n,X_{i\ell_n}}_{(i+1)\ell_n}$, where $W^{s,x}_t$ was defined in \eqref{eq:def_Wtx}. In view of \eqref{eq:tail}, the field $(W_{(i+1)\ell_n}^{i\ell_n,x})_{i\in\N,x\in\Z^d}$ should behave, up to some short-range dependence, like an i.i.d. field of $\p$-stable random variables and, if $\ell_n$ grows sufficiently slowly, $(X_0,X_{\ell_n},X_{2\ell_n},\dots)$ should behave like a random walk with finite range, so we would recover the setup of \cite{V21}. 

\smallskip Furthermore, the comparison with the $\gamma$-stable random environment suggests that the lower bound from Corollary~\ref{cor:p} might be sharp.

\smallskip One interesting question for future research is whether such an approximation can be used to construct an intermediate disorder regime for the directed polymer in dimensions $d\geq 3$. In dimensions $d=1$ and $2$, it is by now well-understood that the polymer measure has a non-trivial scaling limit if we choose a time-dependent inverse temperature $\beta=\beta_n$ that decays to zero at the appropriate rate, but to the best of our knowledge it is not known whether a similar phase appears in dimension $d\geq 3$ as $\beta_n\downarrow \beta_{cr}$. In the heavy-tailed setup, the existence of an intermediate disorder phase has recently been proved in \cite{BL21} and successive works, where they study the case of strong disorder, $\gamma<1+\frac 2d$, and inverse temperature 
\begin{align*}
\beta_n=n^{-\frac{d}{2\gamma}(1+\frac d2-\gamma)+o(1)}.
\end{align*}

\subsection{Strategy}\label{sec:strategy}

Here and in the rest of the paper, we simplify the notation by replacing index sets over space, time or space-time with continuous sets, with the understanding that an intersection with $\Z^d$, $\N$ or $\N\times\Z^d$ has to be taken, for example in \eqref{eq:iid} below. 

\smallskip The idea for the lower bound is that $\X^f_n$ behaves approximately like a sum of $N_n$ independent random variables, each of which has a decent probability of taking a value larger than $n^{-\xi(\beta)-o(1)}$ in absolute value. The sequence $(N_n)_{n\in\N}$ satisfies $\lim_{n\to\infty}N_n=\infty$, so the claim follows from existing results about the anti-concentration of independent random variables. More precisely, we show that on an event with large probability, $\X_n^f$ can be decomposed as
\begin{align}\label{eq:decomp}
\X_n^f=A+B+{\textstyle \sum_{i=1}^{N_n}}Z_i,
\end{align}
where $B$ is negligibly small, $A$ is measurable with respect to some sigma field $\mathcal G$ and $Z_0,\dots,Z_{N_n}$ are independent conditionally on $\mathcal G$. See Proposition~\ref{prop:decomposition} for the exact statement. To establish anti-concentration, we need to know that the $Z_i$ are sufficiently dispersed, i.e., there exists some deterministic $c>0$ such that, almost surely,
\begin{align}\label{eq:anticonc}
\sup_{\lambda\in\R} \P\Big(Z_i\in \big[\lambda,\lambda+n^{-\xi(\beta)-o(1)}\big]\Big|\mathcal G\Big)\leq 1-c.
\end{align}
To construct the decomposition \eqref{eq:decomp}, we first show the existence of so-called \emph{\localization sites}, i.e. space-time sites $(t,x)\in[0,n]\times[-n^{1/2},n^{1/2}]^d$ that have probability at least $n^{-\xi(\beta)-o(1)}$ of being visited by a polymer started from $\{0\}\times[-n^{1/2},n^{1/2}]^d$. Given such a \localization site $(t,x)$, we let $Z_i$ be the contribution to $\X_n^f$ from paths visiting $(t,x)$. In that way, modifying $\omega_{t,x}$ changes the value of $Z_i$ by an amount proportional to the weight of $(t,x)$, and hence \eqref{eq:anticonc} holds with $\mathcal G$ the sigma-field generated by the environment outside of $\omega_{t,x}$. At this point, we do not go into further details,  but we note that the actual construction ensures that we can choose the same $\mathcal G$ for all $Z_i$ simultaneously and that modifying the environment at the \localization site corresponding to $Z_i$ does not influence $(Z_j)_{j\neq i}$. For the purpose of this introduction, we ignore these technical difficulties and focus on the existence of the \localization sites. 

\smallskip The main idea is that the probability that a polymer starting from $\{0\}\times[-n^{1/2},n^{1/2}]^d$ visits $(t,x)$ depends mostly on the environment close to $(t,x)$, specifically on the value of the backward partition function $\cev W_{t-\ell_n}^{t,x}$, where $\ell_n$ is a small time-scale and, for $0\leq s\leq t$,
\begin{align}\label{eq:def_reverse}
\cev W_{s}^{t,x}\coloneqq {\cev E}{}^{t,x}\left[e^{\beta H_{[s,t)}(\omega,X)-(t-s)\lambda(\beta)}\right].
\end{align}
Here, $(X=(X_k)_{k=t,\dots,0},{\cev P}{}^{t,x})$ denotes the simple random walk running backward in time, starting from space-time site $(t,x)$. We will choose $\ell_n$ small enough that we can extract $n^{1+d/2-o(1)}$ independent copies of $W_{\ell_n}$ from the family 
\begin{align}\label{eq:iid}
\big(\cev W_{t-\ell_n}^{t,x}\big)_{(t,x)\in[0,n]\times[-n^{1/2},n^{1/2}]^d}.
\end{align}
The \localization sites correspond to near-maximizers in \eqref{eq:iid}, the order of which can be determined with the help of extreme value statistics. To do so, we need a lower tail bound on $W_{\ell_n}$. Such a result is usually proved by large deviation methods and we therefore  introduce the logarithmic moment generating function of $(\log W_n)_{n\in\N}$,
\begin{align}\label{eq:def_a}
\a^\beta(p)\coloneqq \lim_{n\to\infty}\frac 1n\log\E\big[(W_n^\beta)^p\big],
\end{align}
and the critical exponent for exponential growth of moments,
\begin{align}\label{eq:def_q} 
\q(\beta)\coloneqq \inf\big\{p\geq 1:\a^\beta(p)>0\big\}.
\end{align}
 The existence of the limit \eqref{eq:def_a} is proved in Proposition~\ref{prop:a}. The G\"artner-Ellis theorem shows that if  $\ell_n\geq c\log n$, then
\begin{align}\label{eq:ldp}
\P\Big(W_{\ell_n}\geq n^{\frac{1+d/2}\q}\Big)\gtrapprox  n^{-1-d/2+o(1)},
\end{align}
hence
\begin{align*}
	\max_{(t,x)\in[0,n]\times[-n^{1/2},n^{1/2}]^d}  \cev W_{t-\ell_n}^{t,x}\geq n^{(1+d/2)/\q}
\end{align*}
with high probability. We call $(t,x)$ a \localization site if $\cev W_{t-\ell_n}^{t,x}\geq n^{(1+d/2)/\q-o(1)}$. A number of technical estimates based on \eqref{eq:WD} and the central limit theorem, Theorem~\ref{thmx:phase}(iii), are necessary to see that this definition satisfies the property outlined above, i.e., that $\{X_t=x\}$ has large probability under the polymer measure starting from $\{0\}\times[-n^{1/2},n^{1/2}]^d$ in the time-horizon $[0,n]$.

\smallskip Regarding the upper bound, it is natural to use the $L^p$-bound from Theorem~\ref{thmx:moments}. We thus consider the martingale $(M_{n,k}^f)_{k=0,\dots,n}$ defined by 
\begin{align}\label{eq:def_MG}
M^f_{n,k}\coloneqq n^{-d/2}{\textstyle \sum_{x\in\Z^d}}f(x/\sqrt n)(W_k^{0,x}-1),
\end{align}
whose quadratic variation $k\mapsto\langle M_{n,\cdot}^f\rangle_k$ can be approximated by
\begin{align}\label{eq:corrector_approx}
\langle M^f_{n,\cdot}\rangle_n\approx n^{-d}\sum_{(t,x)\in[1,n]\times[-n^{1/2},n^{1/2}]^d} \big(\cev W_{1}^{t,x}\big)^2,
\end{align}
see Proposition~\ref{prop:corrector}. If we assume $\p>1$, then a straightforward argument based on decomposing the summands in  \eqref{eq:corrector_approx} according to their size yields $\langle M^f_{n,\cdot}\rangle_n\lessapprox n^{-d+(2+d)/\p}$, which we then combine with the general relation $|M_{n,n}|\approx \langle M_{n,\cdot}\rangle_n^{1/2}$. We refer to Section~\ref{sec:upper_corrector} for a detailed description and summarize the above outline in the following theorem. 

\begin{theorem}\label{thm:upper_lower}
Assume $d\geq 3$, \eqref{eq:WD} and let $f$ be as in Theorem~\ref{thm:main}.
\begin{enumerate}
\item[(i)]\textbf{Lower bound}: Recall \eqref{eq:def_q}. For every $\eps>0$,
\begin{align}
\lim_{n\to\infty}\P\left(|\X^f_n| \leq n^{-\frac d2+\frac{d+2}{2\q}-\eps}\right)&=0.\label{eq:lower_bound_mg}
\end{align}
\item[(ii)]\textbf{Upper bound}: Recall \eqref{eq:def_p}. If $\p(\beta)>1$, then, for every $\eps>0$,
\begin{align}
\lim_{n\to\infty}\P\left(\langle M^f_{n,\cdot}\rangle_n\geq n^{-d+\frac{2+d}{\p\wedge 2}+\eps}\right)=0,\label{eq:upper_bound}\\
\lim_{n\to\infty}\P\left(|\X_n^f|\geq n^{-\frac d2+\frac{2+d}{2\p\wedge 4}+\eps}\right)=0.\label{eq:upper_bound_mg}
\end{align}
\end{enumerate}
\end{theorem}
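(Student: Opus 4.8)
The plan is to carry out the program of Section~\ref{sec:strategy}, treating the lower and upper bounds separately.

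\emph{Lower bound \eqref{eq:lower_bound_mg}.} The first step is to produce \localization sites. Fix $\ell_n\coloneqq\lceil c\log n\rceil$ with $c$ large; since $\a^\beta$ exists (Proposition~\ref{prop:a}) and $\a^\beta(p)\le 0$ for $p<\q$, estimate \eqref{eq:ldp} holds, i.e.\ $\P\big(W_{\ell_n}\geq n^{(1+d/2)/\q-o(1)}\big)\geq n^{-1-d/2-o(1)}$. Tiling $[0,n]\times[-n^{1/2},n^{1/2}]^d$ by base points $(t,x)$ with temporal spacing $\gg\ell_n$ and spatial spacing $\gg\ell_n^{1/2}$, and truncating the backward walk in \eqref{eq:def_reverse} to a box of radius $\ell_n^{(d+1)/2}$ (which changes $\cev W_{t-\ell_n}^{t,x}$ only on an event of super‑polynomially small probability), yields $N=n^{1+d/2-o(1)}$ i.i.d.\ copies of $W_{\ell_n}$. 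A first/second moment argument then shows that with high probability at least $N_n\to\infty$ of them exceed $n^{(1+d/2)/\q-o(1)}$; discarding some, we may further require that the spatial coordinate $z_i$ satisfies $z_i/n^{1/2}\in U$, where $U$ is a fixed open set on which a suitable Gaussian mollification of $f$ stays bounded away from $0$. These are the \localization sites $(t_i,z_i)$, $i\le N_n$.

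\emph{Lower bound, conclusion.} We use the decomposition \eqref{eq:decomp} of Proposition~\ref{prop:decomposition}, with $\mathcal G=\sigma(\omega_{s,y}:(s,y)\notin\{(t_i,z_i)\})$, $B$ negligible, $A$ measurable with respect to $\mathcal G$, and $\sum_i Z_i$ collecting the contributions to $\X_n^f$ of paths through the sites, arranged so that $(Z_j)_{j\ne i}$ is unaffected by $\omega_{t_i,z_i}$. Applying the Markov property at times $t_i$ and $t_i-\ell_n$ gives $Z_i=e^{\beta\omega_{t_i,z_i}-\lambda}\Gamma_i$, with $\Gamma_i$ a $\mathcal G$-measurable product of: the forward partition function from $(t_i,z_i)$, which lies in $[\delta,\delta^{-1}]$ with high probability by uniform integrability and the zero–one law (Theorem~\ref{thmx:phase}(iii)); the backward partition function $\cev W_{t_i-\ell_n}^{t_i,z_i}\geq n^{(1+d/2)/\q-o(1)}$; and a factor $n^{-d/2}$ times the far‑backward contribution, which the central limit theorem in probability identifies up to $o(1)$ with $n^{-d/2}$ times the Gaussian mollification of $f$ near $z_i/n^{1/2}$, hence bounded below. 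Thus $|\Gamma_i|\geq n^{-d/2+(d+2)/(2\q)-o(1)}$. Since $\omega_{t_i,z_i}$ is independent of $\mathcal G$, non‑degenerate, and enters $Z_i$ only through $e^{\beta\omega_{t_i,z_i}}$, the conditional concentration function of $Z_i$ at scale $n^{-d/2+(d+2)/(2\q)-\eps}$ is at most $1-c$ for a deterministic $c>0$ and all large $n$ (the ratio of window to scale is $n^{-\eps+o(1)}\to 0$), which is \eqref{eq:anticonc}. The Kolmogorov–Rogozin inequality then gives $\sup_\lambda\P\big(\sum_{i\le N_n}Z_i\in[\lambda,\lambda+n^{-d/2+(d+2)/(2\q)-\eps}]\mid\mathcal G\big)\leq C/\sqrt{N_n}$, and since $|B|$ lies below the window with high probability we conclude $\P\big(|\X_n^f|\leq n^{-d/2+(d+2)/(2\q)-\eps}\big)\to 0$. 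The delicate point, and the bulk of the work, is to make the three estimates on the factors of $\Gamma_i$ hold simultaneously over the growing family of sites and to set up the decomposition so that the $Z_i$ are genuinely conditionally independent.

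\emph{Upper bound \eqref{eq:upper_bound}.} Computing the conditional variances of the increments of \eqref{eq:def_MG} gives $\langle M^f_{n,\cdot}\rangle_n=(e^{\lambda(2\beta)-2\lambda(\beta)}-1)\,n^{-d}\sum_{k=1}^n\sum_z\Phi_k(z)^2$, where $\Phi_k(z)\coloneqq\sum_x f(x/n^{1/2})\ESRW_x[e^{\beta H_{k-1}(\omega,X)-(k-1)\lambda};X_k=z]$ is measurable with respect to $\F_{k-1}$. Reversing time yields $|\Phi_k(z)|\leq\|f\|_\infty\cev W_1^{k,z}$, and discarding the $z$ with $|z|>Cn^{1/2}$ (a negligible error, by Proposition~\ref{prop:corrector}) bounds $\langle M^f_{n,\cdot}\rangle_n$ by a constant times $n^{-d}\sum_{(t,z)}(\cev W_1^{t,z})^2$, a sum of $\asymp n^{1+d/2}$ terms over $[1,n]\times[-Cn^{1/2},Cn^{1/2}]^d$. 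Since $\p>1$, Doob's inequality and $L^p$-boundedness for $p<\p$ give $\P(\cev W_1^{t,z}\geq s)\leq s^{-\p+o(1)}$; decomposing the sum dyadically by the size of the summand, bounding the number exceeding $2^j$ by Markov's inequality (expectation $\le n^{1+d/2}2^{-j(\p-o(1))}$), and taking a union bound over the $O(\log n)$ relevant scales (the largest being $n^{(1+d/2)/\p+o(1)}$, the order of the maximum) yields $\sum_{(t,z)}(\cev W_1^{t,z})^2\leq n^{(2+d)/(\p\wedge 2)+o(1)}$ with high probability — dominated by the top scale if $\p<2$, by the unit scale if $\p>2$, with a logarithmic correction at $\p=2$. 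This is \eqref{eq:upper_bound}. To pass to \eqref{eq:upper_bound_mg} we transfer this to $\X_n^f=M^f_{n,n}$ by an $L^q$ estimate and Markov's inequality: if $\p\le 2$, pick $q\in(1,\p)$ and apply the von Bahr–Esseen inequality twice — once to the martingale $k\mapsto M^f_{n,k}$, once conditionally on $\F_{k-1}$ to the sum over $z$ of the independent centered terms $(e^{\beta\omega_{k,z}-\lambda}-1)\Phi_k(z)$ — to get $\E[|M^f_{n,n}|^q]\leq C n^{-dq/2}\sum_{(t,z)}\E[(\cev W_1^{t,z})^q]\leq C n^{1+d/2-dq/2}$, using \eqref{eq:exp_mom} and $\sup_m\E[W_m^q]<\infty$; Markov with $q$ close to $\p$ gives $\P\big(|\X_n^f|\geq n^{-d/2+(d+2)/(2\p)+\eps}\big)\to 0$, and $2\p\wedge 4=2\p$. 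If $\p>2$, then $\beta<\beta_{cr}^{L^2}$, so $\sup_m\E[W_m^2]<\infty$, $\E[(M^f_{n,n})^2]=\E[\langle M^f_{n,\cdot}\rangle_n]\leq C n^{-(d-2)/2}$, and Markov with $q=2$ gives $\P\big(|\X_n^f|\geq n^{-(d-2)/4+\eps}\big)\to 0$, where $2\p\wedge 4=4$ and $-(d-2)/4=-d/2+(d+2)/4$. The main obstacles are the \localization-site analysis underlying the lower bound and, for the upper bound, the moment input of Theorem~\ref{thmx:moments} together with the truncation estimates of Proposition~\ref{prop:corrector}.
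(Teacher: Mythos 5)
Your lower bound follows the paper's route almost exactly: construct \localization sites by an LDP applied to a thinned independent family of backward small-scale partition functions, decompose $\X_n^f$ along those sites as in Proposition~\ref{prop:decomposition}, and finish with the Kolmogorov--Rogozin (Theorem~\ref{thmx:rogozin}) anti-concentration bound conditioned on $\mathcal G$. You correctly flag that the "delicate point" is making the three estimates on the factors of $\Gamma_i$ hold simultaneously over the growing family and ensuring conditional independence; that is precisely the content of Lemmas~\ref{lem:heavy_existence}--\ref{lem:heavy_prop} and the proofs of \eqref{eq:PB}, \eqref{eq:PBB}, \eqref{eq:PZ} (the FKG step, the moderate-deviation separation of sites, etc.), so the outline is right but those estimates are exactly what the bulk of Section~\ref{sec:lower} supplies. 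The proof of \eqref{eq:upper_bound} via the level-set/dyadic decomposition of the sum $n^{-d}\sum_{t,z}(\cev W_1^{t,z})^2$ also matches the paper (Section~\ref{sec:upper_corrector}); your dyadic scheme is the same as the paper's $n^{k\delta}$ levels up to a change of base.

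Where you genuinely depart from the paper is the passage from a quadratic-variation bound to \eqref{eq:upper_bound_mg}. The paper runs a uniform stopped-martingale argument: introduce $\tau_n:=\inf\{t:\langle M_{n,\cdot}^f\rangle_{t+1}\ge n^{-d+(2+d)/(\p\wedge 2)+\eps}\}$, use $\E[(M^f_{n,n\wedge\tau_n})^2]=\E[\langle M^f_{n,\cdot}\rangle_{n\wedge\tau_n}]$ together with \eqref{eq:upper_bound}, and finish by Chebyshev. You instead split at $\p=2$: for $\p\le 2$ you estimate $\E[|M^f_{n,n}|^q]$, $q<\p$, directly by applying the von~Bahr--Esseen inequality first in time (to the martingale increments) and then in space (to the independent centered summands at each time slice), giving $\E[|M^f_{n,n}|^q]\le C\,n^{-dq/2}\sum_{(t,z)}\E[(\cev W_1^{t,z})^q]\le C\,n^{1+d/2-dq/2}$ and then Markov; for $\p>2$ you invoke $L^2$-boundedness and the exact formula $\E[(M^f_{n,n})^2]=\E[\langle M^f_{n,\cdot}\rangle_n]$. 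Both routes are valid. The stopped-martingale argument is uniform in $\p>1$ and only needs \eqref{eq:upper_bound} as input; your von~Bahr--Esseen route bypasses \eqref{eq:upper_bound} for $\p\le 2$ at the cost of a case split, and it also produces a clean moment bound $\E[|M^f_{n,n}|^q]=O(n^{1+d/2-dq/2})$ which is of some independent interest. One small inaccuracy: for the spatial truncation to $|z|\lesssim n^{1/2}$ you cite Proposition~\ref{prop:corrector}, but that proposition only identifies the compensator; the relevant estimate is the moderate-deviation tail bound (as in \eqref{eq:bound_super}), and in the von~Bahr--Esseen version you additionally need to show $\E\big[(\cev W_1^{t,z}[\1_{X_0\in\operatorname{supp}}])^q\big]$ is super-polynomially small when $|z|\gg n^{1/2}$, which takes an extra H\"older-type argument interpolating between the $L^1$ and $L^p$ ($p<\p$) bounds on $W$; it is true but not free.
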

Note that it is possible to obtain a lower bound for the quadratic variation complementing \eqref{eq:upper_bound}, which was done in an earlier version of this paper \cite{J22_old}. We also emphasize that the assumptions \eqref{eq:bounded} and $\beta>\beta_{cr}^{L^2}$ from Theorem~\ref{thm:main} are not necessary up to this point. They are, however, necessary for the final part of the argument, which is to show that the two bounds agree, i.e., that $\p=\q$.

\begin{theorem}\label{thm:pq}
Assume $d\geq 3$, \eqref{eq:WD}, $\beta>\beta_{cr}^{L^2}$ and \eqref{eq:bounded}. For every $\eps>0$, there exist $\newconstant\label{c:5}>1$ and $\newconstant\label{c:useless}>0$ such that, for all $n$ large enough,
\begin{align}\label{eq:claim}
\E[W_n^{\p+\eps}]\geq \oldconstant{c:useless}\oldconstant{c:5}^n.
\end{align}
In particular, $\p=\q$.
\end{theorem}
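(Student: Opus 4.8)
First I would observe that the displayed bound already implies $\p=\q$. Indeed it gives $\a^\beta(\p+\eps)\ge\log\oldconstant{c:5}>0$ for every $\eps>0$, so $\q\le\p+\eps$ for all $\eps>0$ and hence $\q\le\p$; conversely, for $p<\p$ the martingale is $L^p$-bounded, so $\tfrac1n\log\E[W_n^p]\to 0$, which together with $\E[W_n^p]\ge1$ gives $\a^\beta(p)=0$, so $\{p\ge1:\a^\beta(p)>0\}\subset[\p,\infty)$ and $\q\ge\p$ (Proposition~\ref{prop:a}). The content of the theorem is therefore the exponential lower bound, and I would approach it through the point-to-point normalised partition functions
\[
q_m(y)\ \coloneqq\ \ESRW\!\big[e^{\beta H_{[1,m]}(\omega,X)-m\lambda(\beta)}\,\1_{X_m=y}\big]\qquad(y\in\Z^d),
\]
so that $W_m=\sum_{y\in\Z^d}q_m(y)$, and the averaged quantity $g_p(m)\coloneqq\E\big[\sum_y q_m(y)^p\big]$, where $p\coloneqq\p+\eps$.

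Two elementary facts would carry the structure. First, $\big(\sum_y a_y\big)^p\ge\sum_y a_y^p$ for $a_y\ge0$ and $p\ge1$, hence $\E[W_n^p]\ge g_p(n)$. Second, $g_p$ is supermultiplicative: the chain rule $q_{m+m'}(y)=\sum_z q_m(z)\,q^{m,z}_{m'}(y)$, with $q^{m,z}_{m'}(y)$ the analogous point-to-$y$ partition function over the interval $(m,m+m']$ for a walk started from $z$ at time $m$, together with the same inequality, the independence from $\F_m$ of the environment on $(m,m+m']$, and translation invariance, gives $g_p(m+m')\ge g_p(m)g_p(m')$. Since $g_p(r)\in(0,\infty)$ for all $r$, a Fekete argument reduces the theorem to exhibiting one value $m_0$ with $g_p(m_0)>1$: then $\E[W_n^p]\ge g_p(n)\ge\oldconstant{c:useless}\,\oldconstant{c:5}^{\,n}$ with $\oldconstant{c:5}=g_p(m_0)^{1/m_0}>1$ and $\oldconstant{c:useless}=g_p(m_0)^{-1}\min_{0\le r<m_0}g_p(r)$. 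The range $\eps\ge2-\p$ is then immediate, since there $p\ge2$ and $\a^\beta(p)\ge\a^\beta(2)>0$: by $\beta>\beta_{cr}^{L^2}$ the second moment $\E[W_n^2]=\ESRW^{\otimes2}\!\big[\exp\big((\lambda(2\beta)-2\lambda(\beta))\textstyle\sum_{i\le n}\1_{X_i=Y_i}\big)\big]$ is the partition function of a supercritical homogeneous pinning model for the difference walk $X-Y$ and grows exponentially. So the real point is the range $p\in(\p,2)$.

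For $p\in(\p,2)$ I would bound $g_p(m)$ from below by the contribution of the event $\{W_m>t\}$, with $t$ a growing function of $m$. A simple-random-walk path of length $m$ visits at most $c\,m^d$ sites, so the power-mean inequality gives $\sum_y q_m(y)^p\ge(c\,m^d)^{1-p}W_m^p$, and therefore
\[
g_p(m)\ \ge\ \E\Big[{\textstyle\sum_y}q_m(y)^p\ ;\ W_m>t\Big]\ \ge\ (c\,m^d)^{1-p}\,t^{\,p}\,\P(W_m>t).
\]
The ingredient still missing is a quantitative form of Theorem~\ref{thmx:moments}: instead of the stated bound on $\sup_n W_n$, one needs $\P(W_m>t)\ge c'\,t^{-\p}$ at a concrete time $m$, valid over a range of $t$ that grows fast enough with $m$ — or, equivalently, a version in which the large value of $W_m$ is produced using only $m^{o(1)}$ occupied sites, which would upgrade the factor $(c\,m^d)^{1-p}$ to $m^{-o(1)}$. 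In either case, choosing $t$ at the top of its admissible range makes the right-hand side diverge, since $p-\p=\eps>0$, and then $g_p(m_0)>1$ for $m_0$ large, which finishes the proof. This is where the remaining hypotheses are used: \eqref{eq:bounded} bounds the one-step factors $e^{\beta\omega_{t,x}-\lambda(\beta)}$ — so that reaching $W_m>t$ requires an environment block of length $\asymp\log t$, which permits localisation in time and space; $\beta>\beta_{cr}^{L^2}$ ensures that $(W_n)$ genuinely fails $L^2$-boundedness, so there really is a polynomial lower tail with $\p<2$ to exploit; and the central limit theorem, Theorem~\ref{thmx:phase}(iii), controls how much the endpoint mass can spread while the block forms.

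The hard part is precisely this quantitative upgrade of Theorem~\ref{thmx:moments}. Its statement only records that level $t$ is eventually reached by $\sup_n W_n$, and a naive single-``boost'' estimate reaches it only at an exponential time and, worse, with the weaker exponent $\q$ rather than $\p$. One has to reopen the construction behind \cite[Theorem~2.1]{J21_1} and use its full space-time freedom — which is exactly what upgrades the exponent from $\q$ to $\p$ — while showing that it can be carried out on a space-time box whose diameter is only polynomial in $t$, so that the gain from the tail outruns the diffusive spreading loss $m^{d(p-1)}$ for \emph{every} $\eps>0$. Carrying this out, with enough control of the sub-exponential error terms that the estimate closes uniformly in small $\eps$, is the technical crux of the proof.
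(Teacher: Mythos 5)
Your reduction to showing $g_p(m_0)>1$ for one value $m_0$ is a clean and genuinely different bookkeeping device from the paper's, but it does not bypass the hard work, and as written your proof has a real gap exactly where you flag it.

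What you set up correctly: $W_m^p\ge\sum_y q_m(y)^p$ since $\sum_z a_z^p\le(\sum_z a_z)^p$ for $a_z\ge0$, $p\ge1$; the Chapman--Kolmogorov chain rule together with that inequality, independence of the environment strips and translation invariance gives supermultiplicativity of $g_p$; the power-mean bound $\sum_y q_m(y)^p\ge(cm^d)^{1-p}W_m^p$ is valid because a path of length $m$ lands in at most $Cm^d$ sites; and the deduction that $\p=\q$ from the exponential bound together with Proposition~\ref{prop:a} is correct. So the whole argument has been compressed into: \emph{find one $m$ and one $t$ with} $(cm^d)^{1-p}\,t^{p}\,\P(W_m>t)>1$.

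That compressed statement is where the gap lives. Theorem~\ref{thmx:moments} gives $\P(\sup_n W_n>t)\ge c_1 t^{-\p}$ but says nothing about \emph{when} the supremum is attained; to convert it into a bound on $\P(W_m>t)$ at a concrete $m$ you need a construction that produces the level $t$ within a controlled time horizon, with control simultaneously on the probability and on the time. You acknowledge this as the ``technical crux'', but that crux is not a minor upgrade of \cite[Theorem~2.1]{J21_1}: the paper does not prove anything of the form $\P(W_m>t)\gtrsim t^{-\p}$ with $t$ polynomial in $m$, and it is not clear such a bound holds. What the paper does instead is structurally different. It proves (Lemmas~\ref{lem:overlap2} and~\ref{lem:exists}) a \emph{strong localisation conditional on $W$ being large}: on the event $\tau(\alpha^{2k})<\infty$, with probability close to $1$ there is a time $m\in[\tau(\alpha^k),\tau(\alpha^{2k}))$ with $\max_x\mu_{\omega,m}(X_m=x)\ge c_3$, and (Lemma~\ref{lem:downward}) at that time $W_m$ is still of order $\alpha^{l}$. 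This localisation is precisely what allows a \emph{renewal construction}: restart from the localised site, pick up a multiplicative factor $\ge\alpha^{l-k\eta}c_3$ each time, iterate $\lfloor n/T\rfloor$ times, and compute the probability of $\lfloor n/T\rfloor$ successes. That renewal step is what converts a one-shot tail estimate into an exponential-in-$n$ lower bound; it produces an event on which $W_n\asymp e^{\gamma n}$ (so $t$ is \emph{exponential} in $m$, not polynomial), with probability $\asymp e^{-\delta n}$ and $\delta<(\p+\eps)\gamma$.

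So the situation is: your $g_p$-supermultiplicativity framework is a valid and somewhat more modular packaging --- plugging the paper's final tail estimate into your inequality $g_p(m)\ge(cm^d)^{1-p}t^p\P(W_m>t)$ with $t=a(\alpha^{l-k\eta}c_3)^{m/T}$ does close the argument, and exponential $t$ is fine (you phrased the requirement as polynomial, which is too restrictive). But the substance of the theorem --- the strong-localisation-in-weak-disorder-on-the-tail-event phenomenon, and the renewal argument it enables --- is exactly the part you leave open. Without Lemmas~\ref{lem:overlap2}--\ref{lem:downward} or an equivalent, the proposal is a reformulation of the problem rather than a proof, and the reformulation does not make the hard step easier.
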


Now the main result, Theorem~\ref{thm:main}, follows from Theorems~\ref{thm:upper_lower} and~\ref{thm:pq}, together with Theorem~\ref{thmx:moments}.

\smallskip The method used to prove Theorem~\ref{thm:pq} is quite interesting in its own right and gives new insight into the weak disorder phase without $L^2$-boundedness, but we postpone this discussion to the beginning of Section~\ref{sec:pq}. 

\subsection{Limitations and extensions}\label{sec:limitations}

We first discuss the necessity of the assumptions in Theorem~\ref{thm:main}. Concerning the behavior at $\beta_{cr}^{L^2}$, it is natural to conjecture that in this case the decay rate is the same as in the $L^2$-bounded case, $|\X^f_n|\approx n^{-\frac{d-2}4+o(1)}$, with a subpolynomial correction. In fact, this would follow from Theorem~\ref{thm:upper_lower} if we knew that $\q(\beta_{cr}^{L^2})= \p(\beta_{cr}^{L^2})=2$. On the one hand, the argument in \cite{BT10,BS10} for the inhomogeneous pinning model shows that, for all $\eps\in(0,\eps_0)$, 
\begin{align*}
\sup_n\E\Big[\big(W_n^{\beta_{cr}^{L^2}}\big)^{2-\eps}\Big]<\infty
\end{align*}
and hence $\p(\beta_{cr}^{L^2})=2$. However, we cannot exclude the possibility that $\q(\beta_{cr}^{L^2})>2$, so it is not clear that the bounds in Theorem~\ref{thm:upper_lower} agree. Our proof of $\p(\beta)=\q(\beta)$ cannot be extended to $\beta\leq\beta_{cr}^{L^2}$, see Remark~\ref{rm:critical}, although one would naturally expect that this equality is true in the whole weak disorder phase. 

\smallskip  The assumption \eqref{eq:bounded} is not necessary for the lower bound, Theorem~\ref{thm:upper_lower}(i), and for the upper bound we only need to assume $\p(\beta)>1$. Recently, the conclusion of Theorem~\ref{thmx:moments} has been extended to a large class of unbounded environments in \cite{FJ23} and Theorem~\ref{thm:upper_lower}(ii) thus continues to hold if the environment satisfies \cite[Condition 1]{FJ23}. On the other hand, it seems difficult to remove the assumption \eqref{eq:bounded} in our proof of Theorem~\ref{thm:pq}. 

\smallskip In another direction, Theorem~\ref{thm:main} only reveals the rate of convergence in \eqref{eq:decay} up to an error of order $n^{o(1)}$ and we hope that a more precise statement similar to Theorem~\ref{thmx:ew}(ii) can be proved in the future. The argument in this paper strongly suggests that the limiting object would be a suitable stable version of the Gaussian Free field, see also the discussion at the beginning of Section~\ref{sec:upper}. However, it seems to be quite difficult to even define such a ``stable free field'', see \cite[Open Problem 6.3]{BPR19}. 

\smallskip There are essentially two steps of the argument where new ideas seem necessary in order to obtain a more precise result:

\begin{itemize}
 \item[(1)] First, for the construction of \localization sites we want to treat \eqref{eq:iid} as an i.i.d. family, but this is only true if the index set is replaced by $\mathcal G_n\subseteq [0,n]\times[-n^{1/2},n^{1/2}]^d$, where $\mathcal G_n$ satisfies $|(t,x)-(s,y)|_\infty\gg\ell_n$ for distinct $(t,x),(s,y)\in\mathcal G_n$. This thinning introduces an error of poly-logarithmic size. To improve upon it, one would need to show that if $1\ll |(t,x)-(s-y)|_\infty\ll \ell_n$,  then the probability that $\cev W_{t-\ell_n}^{t,x}$ and $\cev W_{s-\ell_n}^{s,y}$ are both large is much smaller than the probability that only one of them is large.
 
 \item[(2)] In addition, we would need a better error control in the large deviation lower bound \eqref{eq:ldp}, which would in turn  require a better understanding of $\a$. In this work, we can learn enough about $\a$ from general principles, but to go beyond the $n^{o(1)}$ precision it would be helpful to understand where $\a$ is differentiable and whether the rate of convergence can be improved from $\E[W_n^p]=e^{n(\a(p)+o(1))}$ to $\E[W_n^p]=e^{n\a(p)}(1+o(1))$, for $p>\p$. Note that the last assertion is known for the special case $p=2$, since $\E[W_n^2]$ is equivalent to the partition function of the homogeneous pinning model, see \cite[Theorem~2.2$\text{(1)}$]{G07}.
\end{itemize}

Finally, we note that in $L^2$-weak disorder there is a result analog to Theorem~\ref{thmx:ew}(ii) for the log-partition functions, see \cite[Corollary 2.11]{CNN20} and \cite[Theorem~1.2]{LZ20}, which in particular shows
\begin{align}\label{eq:log_convergence}
n^{-d/2}\sum_{x\in[-n^{1/2},n^{1/2}]^d} \left(\log W_n^{0,x}-\E[\log W_n]\right)\asymp n^{-(d-2)/4}.
\end{align}
We refer to \cite{CNN20,LZ20} for the motivation due to the connection between $\log W_n$ and the KPZ equation.

\smallskip In the $L^2$-regime, it is known that $W_\infty^\beta$ has all negative moments, see \cite[Proposition 1]{M10}, and thus $\E[\log Z_n]$ is bounded as $n\to\infty$. This result has recently been extended to the full weak disorder phase in \cite[Theorem~1.1$\text{(iv)}$]{J21_1} and it is therefore natural to wonder what one can say about the left-hand side of \eqref{eq:log_convergence} without $L^2$-boundedness. Note that, a priori, there is no reason to expect the rate of convergence to be the same as in Theorem~\ref{thm:main}, since $(W_n^{0,x})_{x\in\Z^d}$ and $(\log W_n^{0,x})_{x\in\Z^d}$ are dominated, respectively, by the upper and the lower tail of $W_n^\beta$. To illustrate that our methods do not easily apply to this question, let us try to repeat the analysis for the upper bound using the quadratic variation in the case $f=\1_{[-1,1]^d}$. Namely, we write
\begin{align*}
n^{-d/2}{\textstyle \sum_{x\in[-n^{1/2},n^{1/2}]^d}}(\log W_n^{0,x}-\E[\log W_n])=\widetilde M_{n,n}-\widetilde M_{n,0},
\end{align*}
where $(\widetilde M_{n,m})_{m=0,\dots,n}$ is defined by $\widetilde M_{n,m}\coloneqq n^{-d/2}\sum_{x\in[-n^{1/2},n^{1/2}]^d}\E[\log W_n^{0,x}|\F_m]$. Its quadratic variation $\langle \widetilde M_{n,\cdot}\rangle_n$ equals 
\begin{align*}
n^{-d}\sum_{m=1}^n\E\left[\Bigg(\sum_{x\in[-n^{1/2},n^{1/2}]^d} \E\Big[\log \frac{W_n^{0,x}}{W_{m-1}^{0,x}}\Big|\F_{m}\Big]- \E\Big[\log \frac{W_n^{0,x}}{W_{m-1}^{0,x}}\Big|\F_{m-1}\Big]\Bigg)^2 \Bigg|\F_{m-1}\right],
\end{align*}
but this formula is much more complicated than \eqref{eq:corrector_approx} and we do not know how to analyze it. 

\subsection{Outline and conventions}

Section~\ref{sec:lower} contains the proof of the lower bound, Theorem~\ref{thm:upper_lower}(i). We obtain a lower tail bound for $W_n^\beta$ in Section~\ref{sec:tails} and define a sequence of \localization sites in Section~\ref{sec:local}, which we then use in Section~\ref{sec:proof_lower} to prove the lower bound. The proof of the upper bound, Theorem~\ref{thm:upper_lower}(ii), can be found in Section~\ref{sec:upper}. We first compute the quadratic variation (Section~\ref{sec:corrector}), then obtain an upper bound on it (Section~\ref{sec:upper_corrector}) and obtain the conclusion in Section~\ref{sec:proof_upper}. Finally, Section~\ref{sec:pq} contains the proof of Theorem~\ref{thm:pq}. In the appendix, we provide the reference for Theorem~\ref{thmx:moments} and give a short proof of Theorem~\ref{thmx:ew}(i).

\smallskip In addition to the convention about index sets mentioned at the start of Section~\ref{sec:strategy}, we follow the convention that constants $c_1,c_2,\dots$ are fixed throughout the article, while constants $c,c',c'',\dots$ are only used within a proof. All constants are positive. 

\smallskip We will occasionally refer to times $t\in\R$ or to intervals $I\subseteq\R_+$, with the understanding that an integer part should be takes, i.e., they should be interpreted as $\lceil t\rceil$ and $I\cap\N$. A similar convention is applied to sites $x\in\R^d$ and sets $A\subseteq\R^d$.

\section{Proof Theorem~\ref{thm:upper_lower}: Lower bound}\label{sec:lower}

\subsection{Tail bounds}\label{sec:tails}
In this section we prove the lower tail bound \eqref{eq:ldp} for $W_n^\beta$. First, we check some easy properties of the logarithmic moment generating function $\a$. The results in this section are valid in any dimension and regardless of whether \eqref{eq:WD} holds.

\begin{proposition}\label{prop:a}
Recall the definitions of $\a$, $\p$ and $\q$ in \eqref{eq:def_a}, \eqref{eq:def_p} and \eqref{eq:def_q}.
\begin{itemize}
	\item[(i)]The limit in \eqref{eq:def_a} is well-defined and $\a(p)\in[0,\lambda(p\beta)-p\lambda(\beta)]$.
 \item[(ii)]The function $p\mapsto \a(p)$ is non-decreasing, convex and continuous.
 \item[(iii)] It holds that $\p\leq\q$. 
 \item[(iv)] If $d\geq 3$ and $\beta>\beta_{cr}^{L^2}$, then $\q<2$.
\end{itemize}
\end{proposition}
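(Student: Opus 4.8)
These are standard subadditivity and convexity arguments. For (i), the key input is the Markov‐type decomposition $W_{n+m}^\beta=\sum_{x}\widehat W_n^{0\to x}\,W_m^{n,x}$, where $\widehat W_n^{0\to x}\coloneqq\ESRW[e^{\beta H_n(\omega,X)-n\lambda(\beta)}\1_{X_n=x}]$ is $\F_n$‐measurable with $\sum_x\widehat W_n^{0\to x}=W_n^\beta$, while $W_m^{n,x}$ is independent of $\F_n$ and distributed as $W_m^\beta$. Writing $W_{n+m}^\beta/W_n^\beta$ as an average of the $W_m^{n,x}$ against the endpoint law $x\mapsto\widehat W_n^{0\to x}/W_n^\beta$ and applying Jensen's inequality (valid since $p\ge1$) gives $(W_{n+m}^\beta)^p\le(W_n^\beta)^{p-1}\sum_x\widehat W_n^{0\to x}(W_m^{n,x})^p$; taking expectations and using the independence collapses the sum and yields $\E[(W_{n+m}^\beta)^p]\le\E[(W_n^\beta)^p]\,\E[(W_m^\beta)^p]$, so $n\mapsto\log\E[(W_n^\beta)^p]$ is subadditive and the limit in \eqref{eq:def_a} exists and equals the infimum. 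For the two‐sided bound, $\E[(W_n^\beta)^p]\ge(\E[W_n^\beta])^p=1$ gives $\a(p)\ge0$, while Jensen inside the walk expectation gives $(W_n^\beta)^p\le\ESRW[e^{p\beta H_n(\omega,X)-pn\lambda(\beta)}]$, whence $\E[(W_n^\beta)^p]\le e^{n(\lambda(p\beta)-p\lambda(\beta))}$ by Fubini and \eqref{eq:exp_mom}.

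For (ii), for fixed $n$ the function $\phi_n(p)\coloneqq\log\E[(W_n^\beta)^p]$ is the logarithmic moment generating function of $\log W_n^\beta$, hence convex in $p$, and $\phi_n(0)=\phi_n(1)=0$. A convex function vanishing at $0$ and $1$ is non‐decreasing on $[1,\infty)$, so each $\phi_n$, and therefore $\a=\lim_n n^{-1}\phi_n$, is convex and non‐decreasing on $[1,\infty)$. Convexity and finiteness (from (i)) give continuity on $(1,\infty)$; at $p=1$ one has $\a(1)=0$ and $0\le\a(p)\le\lambda(p\beta)-p\lambda(\beta)\to0$ as $p\downarrow1$ by continuity of $\lambda$, so $\a$ is continuous at $1$ as well. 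For (iii), if $p<\p$ then $\sup_n\E[(W_n^\beta)^p]<\infty$, which together with $\E[(W_n^\beta)^p]\ge1$ forces $n^{-1}\log\E[(W_n^\beta)^p]\to0$, i.e.\ $\a(p)=0$; thus $\{p\ge1:\a(p)>0\}\subseteq[\p,\infty)$ and $\q\ge\p$.

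\textbf{Part (iv)} is the heart of the proposition and the step I expect to require the most care. The plan is to prove $\a(2)>0$; since $\a$ is continuous (part (ii)) this immediately yields $\a(p)>0$ for some $p<2$, hence $\q<2$. Using the independence of the environment and \eqref{eq:exp_mom} one computes the classical formula $\E[(W_n^\beta)^2]=\E^{X\otimes Y}\!\big[e^{\kappa L_n}\big]$, where $X,Y$ are two independent simple random walks, $L_n\coloneqq\sum_{i=1}^n\1_{X_i=Y_i}$ is their overlap up to time $n$, and $\kappa=\kappa(\beta)\coloneqq\lambda(2\beta)-2\lambda(\beta)>0$ (strict by non‐degeneracy of the environment and strict convexity of $\lambda$). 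Let $T_1<T_2<\cdots$ be the successive collision times of $X$ and $Y$; by the strong Markov property the inter‐collision times are i.i.d.\ copies of the first collision time $T_1\in\N\cup\{\infty\}$, with $p\coloneqq\P(T_1<\infty)<1$ since the difference walk $X-Y$ is transient for $d\ge3$. The characterisation of the $L^2$‐phase via the overlap ($\sup_n\E[(W_n^\beta)^2]<\infty\iff\E[e^{\kappa L_\infty}]<\infty\iff pe^\kappa<1$, together with continuity and strict monotonicity of $\beta\mapsto\kappa(\beta)$) shows that $\beta>\beta_{cr}^{L^2}$ implies $pe^\kappa>1$. Choose $m\in\N$ so large that $p_m e^\kappa>1$, where $p_m\coloneqq\P(T_1\le m)\uparrow p$. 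Forcing $k\coloneqq\lfloor n/m\rfloor$ successive collisions, each occurring within $m$ steps of the previous one, produces an event of probability at least $p_m^{\,k}$ on which $L_n\ge k$; hence $\E[e^{\kappa L_n}]\ge(e^\kappa p_m)^{\lfloor n/m\rfloor}$ and therefore $\a(2)=\lim_n n^{-1}\log\E[(W_n^\beta)^2]\ge m^{-1}\log(e^\kappa p_m)>0$.

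The only genuine difficulty is in part (iv): the point is that ``not $L^2$‐bounded'' must be upgraded to the \emph{exponential} growth $\a(2)>0$, and a crude estimate (e.g.\ restricting to $X\equiv Y$) only yields exponential growth for $\kappa$ large, not throughout $\beta>\beta_{cr}^{L^2}$; the renewal decomposition of the collision times is precisely what captures the sharp threshold $pe^\kappa>1$. As an alternative to the hands‐on estimate one may invoke that $\E[(W_n^\beta)^2]$ coincides, up to multiplicative constants, with the partition function of the homogeneous pinning model (as recalled later in the paper, cf.\ \cite{G07}), whose free energy is strictly positive above the localisation threshold.
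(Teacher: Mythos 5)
Your proof is correct and follows essentially the same route as the paper: subadditivity via the Markov decomposition and Jensen's inequality for (i), convexity of the finite-$n$ logarithmic moment generating functions (Hölder) for (ii), the definitional observation for (iii), and for (iv) the same block-renewal lower bound on $\E[e^{\kappa L_n}]$ via the return probability $p_{\operatorname{return},T}(d)$ of two independent walks, combined with the standard characterization of $\beta_{cr}^{L^2}$ and continuity of $\a$. The only cosmetic differences are that you phrase monotonicity through $\phi_n(0)=\phi_n(1)=0$ rather than the paper's direct Jensen bound $\a(q)\ge(q/p)\a(p)$, and you package the collision argument explicitly as a renewal process; both are equivalent to what the paper does.
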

\begin{proof}
The existence of the limit \eqref{eq:def_a} for $p\geq 1$ follows from subadditive Lemma. Indeed, by Jensen's inequality,
\begin{alignat*}{3}
\E\big[W_{n+m}^p\big]&=\E\left[W_m^p\E\Big[\Big(\sum_x\mu_{\omega,m}^\beta(X_m=x)W_n\circ\theta_{m,x}\Big)^p\Big|\F_m\Big]\right]&&\\
&\leq\E\left[W_m^p\E\Big[\sum_x\mu_{\omega,m}^\beta(X_m=x)\big(W_n\circ\theta_{m,x}\big)^p\Big|\F_m\Big]\right]&=&\E\big[W_{m}^p\big]\E\big[W_{n}^p\big].
\end{alignat*}
For $p\in[0,1]$ the inequality is reversed and we apply the superadditive Lemma instead. Moreover, for $p>1$ Jensen's inequality implies
\begin{align*}
	1=\E[W_n^\beta]^p\leq \E\big[(W_n^\beta)^p\big]\leq \E\Big[E\big[e^{\sum_{k=1}^np\beta\omega_{k,X_k}-pn\lambda(\beta)}\big]\Big]=e^{n(\lambda(p\beta)-p\lambda(\beta))},
\end{align*}
which completes the proof of (i). For (ii), we again apply Jensen's inequality we get $\a(q)\geq \tfrac qp\a(p)$ for $q\geq p$, hence $\a$ is non-decreasing in $\R_+$ and strictly increasing in $(\q,\infty)$. The convexity (and hence continuity) of $\a$ follows easily from H\"older's inequality,
\begin{align*}
\E\big[W_n^{\lambda p+(1-\lambda)q}\big]\leq \E\big[W_n^p\big]^{\lambda }\E\big[W_n^q\big]^{1-\lambda}\quad\text{ for }\lambda\in[0,1],
\end{align*}
and $\p\leq \q$ is clear from the definition. Finally, assume that $d\geq 3$ and $\beta>\beta_{cr}^{L^2}$, which is equivalent to $\lambda(2\beta)-2\lambda(\beta)>-\log(p_{\operatorname{return}}(d))$, where $p_{\operatorname{return}}(d)$ is the probability that two independent simple random walks in $\Z^d$ meet after time $1$ (see \cite[Theorem 3.3]{C17} and references therein). We can choose $T\in\N$ and $\eps>0$ such that 
\begin{align*}
\lambda(2\beta)-2\lambda(\beta)\geq -\log(p_{\operatorname{return},T}(d))+\log(1+\eps),
\end{align*}
where $p_{\operatorname{return},T}(d)\coloneqq P^{\operatorname{SRW},\otimes 2}(X_n^1=X_n^2\text{ for some }n\in\{1,\dots,T\})$. Thus, by Fubini's theorem,
\begin{align*}
\E\big[(W_{kT}^\beta)^2\big]&=E^{\operatorname{SRW},\otimes 2}\left[e^{(\lambda(2\beta)-2\lambda(\beta))\sum_{n=1}^{kT}\1_{X^1_n=X^2_n}}\right]\\
&\geq \left(e^{\lambda(2\beta)-2\lambda(\beta)}p_{\operatorname{return},T}(d)\right)^k\\
&\geq (1+\eps)^k
\end{align*}
and therefore $\a(2)\geq \frac 1T\log (1+\eps)>0$. By continuity, we get $\a(p)>0$ for some $p<2$.
\end{proof}

Next, we obtain a lower tail bound from the G\"artner-Ellis theorem.

\begin{proposition}\label{prop:ldp_lower}
For every $\delta>0$ there exist $x>0$ and $t_0$ such that, for all $t\geq t_0$,
\begin{align}\label{eq:lower}
\P\left(W_t^\beta\geq e^{tx}\right)\geq e^{-tx\q(1+\delta)}.
\end{align}
\end{proposition}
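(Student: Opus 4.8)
The statement is a one-sided large-deviation lower bound, and the key simplification is that only one favorable level $x$ is needed. The plan is to prove it by an explicit exponential change of measure — essentially a hands-on version of the Gärtner–Ellis lower bound, arranged so that no exposed-point hypothesis is required. I would use the properties of $\a$ from Proposition~\ref{prop:a}: it is finite, convex and non-decreasing on $[0,\infty)$, vanishes on $[0,\q]$, and is strictly increasing on $(\q,\infty)$; if $\q=\infty$ then the right-hand side of \eqref{eq:lower} is $0$ and there is nothing to prove, so assume $\q<\infty$. Convexity together with strict monotonicity forces $\a_+'(p)>0$ for $p>\q$, and convexity with $\a(\q)=0$ gives $0\le\a(p)\le\a_+'(p)(p-\q)$ there. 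Since a finite convex function is differentiable off a countable set, I would fix a point of differentiability $p_0>\q$ and set $x_0:=\a'(p_0)>0$; then
\begin{align*}
  \frac{p_0x_0-\a(p_0)}{x_0}=p_0-\frac{\a(p_0)}{\a'(p_0)}\in[\q,\,p_0]\xrightarrow[\,p_0\downarrow\q\,]{}\q ,
\end{align*}
so $p_0$ can be chosen close enough to $\q$ that $p_0x_0-\a(p_0)<\q\big(1+\tfrac\delta2\big)x_0$. This choice is the heart of the argument — it is what produces the factor $1+\delta$ rather than $1$ — and the step I expect to need the most care, since $\a$ may have a corner at $\q$, so that $x_0=\a'(p_0)$ need not tend to $0$; the inequality $\a(p)\le\a_+'(p)(p-\q)$ is precisely what keeps the displayed ratio in $[\q,p_0]$ regardless.

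Next, I would tilt by the $p_0$-th power: let $\mathbf Q_t$ be the probability measure with density $W_t^{p_0}/\E[W_t^{p_0}]$ with respect to $\P$ (legitimate since $W_t>0$ and $\E[W_t^{p_0}]\in(0,\infty)$). Under $\mathbf Q_t$ the variable $\tfrac1t\log W_t$ should concentrate at $x_0$: for fixed $\eps\in(0,x_0)$ and $s>0$ small with $p_0-s>0$, Markov's inequality applied to $W_t^{\pm s}$ gives
\begin{align*}
  \mathbf Q_t\big(W_t\ge e^{t(x_0+\eps)}\big)\le e^{-st(x_0+\eps)}\frac{\E[W_t^{p_0+s}]}{\E[W_t^{p_0}]},\qquad
  \mathbf Q_t\big(W_t\le e^{t(x_0-\eps)}\big)\le e^{st(x_0-\eps)}\frac{\E[W_t^{p_0-s}]}{\E[W_t^{p_0}]},
\end{align*}
and by \eqref{eq:def_a} the exponential growth rates of the right-hand sides are, respectively, $-s(x_0+\eps)+\a(p_0+s)-\a(p_0)$ and $s(x_0-\eps)+\a(p_0-s)-\a(p_0)$; since $\a$ is differentiable at $p_0$ with derivative $x_0$, both equal $-s\eps+o(s)$ and hence are $<0$ for $s$ small. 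Therefore $\mathbf Q_t\big(e^{t(x_0-\eps)}\le W_t\le e^{t(x_0+\eps)}\big)\to1$, in particular it is $\ge\tfrac12$ for $t$ large. I would run the concentration through these one-sided exponential bounds rather than a second-moment estimate precisely so that only differentiability of $\a$ at the single point $p_0$ enters.

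Finally, I would transfer this back: on $A_t:=\{e^{t(x_0-\eps)}\le W_t\le e^{t(x_0+\eps)}\}$ one has $W_t^{p_0}\le e^{tp_0(x_0+\eps)}$, so
\begin{align*}
  \P\big(W_t\ge e^{t(x_0-\eps)}\big)\ge\P(A_t)=\E[W_t^{p_0}]\,\E_{\mathbf Q_t}\!\big[W_t^{-p_0}\ind_{A_t}\big]\ge\tfrac12\,\E[W_t^{p_0}]\,e^{-tp_0(x_0+\eps)}
\end{align*}
for $t$ large. Combining with $\E[W_t^{p_0}]\ge e^{t(\a(p_0)-\eps)}$ for $t$ large and setting $x:=x_0-\eps>0$, the exponent becomes $-(p_0x_0-\a(p_0))-(p_0+1)\eps$, which by the choice of $p_0$ exceeds $-x\q(1+\delta)$ by a fixed positive amount once $\eps$ is small enough (a short computation, using $x=x_0-\eps$ and $p_0x_0-\a(p_0)<\q(1+\tfrac\delta2)x_0$). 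Absorbing the constant $\tfrac12$ into this positive exponential surplus for $t\ge t_0$ then gives $\P\big(W_t\ge e^{tx}\big)\ge e^{-tx\q(1+\delta)}$, as desired.
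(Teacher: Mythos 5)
Your proof is correct, and it reaches the same conclusion by a genuinely more self-contained route. The paper's proof invokes the G\"artner--Ellis lower bound (Theorem 2.3.6 of Dembo--Zeitouni) and Lemma 2.3.9 there to certify that $\a'(q)$ is an exposed point, and then estimates $\a^*(\a'(q))=q\a'(q)-\a(q)\le q\a'(q)$ and picks $q$ in $[\q(1+\eta),\q(1+2\eta)]$. You replace the abstract large-deviation machinery with an explicit tilting $\mathbf Q_t\propto W_t^{p_0}\,\mathrm d\P$ and one-sided Markov bounds, which makes the argument entirely elementary; no exposed-point hypothesis or Legendre-transform bookkeeping is needed. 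The shared kernel of both arguments is the choice of a differentiability point of $\a$ just above $\q$ so that $p\a'(p)-\a(p)\approx\q\,\a'(p)$; you additionally observe the two-sided pinning $\q\le p_0-\a(p_0)/\a'(p_0)\le p_0$ via $\a(p)\le\a_+'(p)(p-\q)$, which makes the ``close to $\q$'' step transparent even though $\a$ may have a corner at $\q$, while the paper only uses the upper bound $\le p$ (which already suffices since $p\downarrow\q$). Both proofs are equally rigorous; yours trades the citation of \cite[Theorem 2.3.6, Lemma 2.3.9]{DZ} for a page of bare-hands estimates, which is a reasonable trade if one wants the argument self-contained.
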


\begin{proof}
Fix $\eta>0$ to be chosen later. Let $\a^*(x)\coloneqq \sup_{q\geq 0}\{qx-\a(q)\}$ denote the Legendre transform of $\a$ and recall from \cite[Definition 2.3.3]{DZ} the definition of an exposed point of $\a^*$. Since the function $\a$ is convex, it is differentiable except for at most countably many points. Thus, we find $q\in[\q(1+\eta),\q(1+2\eta)]$ such that $\a$ is differentiable at $q$. Moreover, $q>\q$ implies that $\a(q)>0$ and $\a'(q)>0$. By \cite[Lemma 2.3.9]{DZ}, $\a'(q)$ is an exposed point with exposing hyperplane $q$ and 
\begin{align*}
\a^*(\a'(q))=q\a'(q)-\a(q)\leq q\a'(q)\leq \q\a'(q)(1+2\eta).
\end{align*}
Let $x\coloneqq\a'(q)(1-\eta)$ and note that, by \cite[Theorem~2.3.6]{DZ}, 
\begin{align*}
\liminf_{t\to\infty}\frac 1t\log\P\left(\log W_t>tx\right)&\geq -\inf_{y\in\F:y>\a'(q)(1-\eta)}\a^*(y)\\
&\geq -\q\a'(q)(1+2\eta)\\
&=-\q x\frac{1+2\eta}{1-\eta},
\end{align*}
where $\F$ denotes the set of exposed points of $\a^*$. We now choose $\eta>0$ small enough that $\frac{1+2\eta}{1-\eta}<1+\delta$, so that \eqref{eq:lower} holds for $t$ large enough.
\end{proof}

Before stating the next lemma, we introduce notation for the restricted partition function,
\begin{align}\label{eq:def_restricted}
	W_n[\1_A]=E[e^{\beta H_n(\omega,X)-n\lambda(\beta)}\1_A(X)],
\end{align}
where $A$ is measurable with respect to the sigma field of the simple random walk $(X_m)_{m\in\N}$. Similar notation will be used for the shifted partition function $W_t^{s,x}$ and the backward partition function $\cev W^{t,x}_s$. 

\smallskip The following lemma is similar to the argument used in \cite[Theorem~1.1$\text{(i)}$]{J21_1}. 
\begin{lemma}\label{lem:concave}
For every $m,n\in\N$, $a>0$ and $A\subseteq\Z^d$, almost surely,
\begin{align*}
\P\left(\frac{W_{m+n}[\1_{X_{m+n}\in A}]}{W_m}>a\Big|\F_m\right)\geq \min_{|x|_1\leq m}2\P(W_n[\1_{X_n\in A-x}]>2a)-1
\end{align*}
\end{lemma}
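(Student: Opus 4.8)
The plan is to exploit the Markov/multiplicative structure of the partition function to reduce the conditional statement at time $m$ to an unconditional statement, and then apply a Paley–Zygmund-type symmetrization. First I would expand, using the Markov property of the simple random walk at time $m$,
\begin{align*}
W_{m+n}[\1_{X_{m+n}\in A}] = \sum_{x\in\Z^d} e^{\beta H_m(\omega,X)-m\lambda(\beta)}\,\1_{X_m=x}\cdot\big(W_n[\1_{X_n\in A-x}]\circ\theta_{m,x}\big),
\end{align*}
so that, writing $\mu_{\omega,m}^\beta(X_m=x)=W_m^{-1}E[e^{\beta H_m(\omega,X)-m\lambda(\beta)}\1_{X_m=x}]$, we get the key identity
\begin{align*}
\frac{W_{m+n}[\1_{X_{m+n}\in A}]}{W_m} = \sum_{|x|_1\leq m}\mu_{\omega,m}^\beta(X_m=x)\,\widehat W^{(x)},
\end{align*}
where $\widehat W^{(x)}\coloneqq W_n[\1_{X_n\in A-x}]\circ\theta_{m,x}$. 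Conditionally on $\F_m$, the weights $\mu_{\omega,m}^\beta(X_m=x)$ are deterministic, sum to $1$, and are supported on $|x|_1\leq m$; moreover the random variables $(\widehat W^{(x)})_{|x|_1\leq m}$ depend only on $(\omega_{t,y})_{t>m}$, hence are independent of $\F_m$, and each $\widehat W^{(x)}$ has the same law as $W_n[\1_{X_n\in A-x}]$.

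Next I would run a convexity/mixture argument to push the threshold event through the convex combination. The cleanest route is: for any collection of $[0,\infty)$-valued random variables $U_x$ and any probability weights $p_x$, one has the elementary bound
\begin{align*}
\P\Big(\textstyle\sum_x p_x U_x > a\Big) \;\geq\; \sum_x p_x\,\P(U_x>2a)\; -\; \P\Big(\textstyle\sum_x p_x\,U_x\1_{U_x\leq 2a} \geq a\Big)\;+\;\ldots
\end{align*}
— but this is getting complicated, so instead I would use the simpler symmetrization already hinted at in the statement. Observe that for the single threshold $2a$ and a convex combination, we always have $\sum_x p_x U_x > a$ on the event that $U_x > 2a$ for a set of $x$'s of total $p$-mass $>\tfrac12$; and the probability of the latter can be lower bounded by a union/averaging argument. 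Concretely, let $p\coloneqq\min_{|x|_1\le m}\P(W_n[\1_{X_n\in A-x}]>2a)$. Introduce the $\{0,1\}$-valued variables $\xi_x\coloneqq\1_{\widehat W^{(x)}>2a}$; conditionally on $\F_m$ these are independent of $\F_m$ with $\E[\xi_x]\geq p$, so $\E[\sum_x \mu_{\omega,m}^\beta(X_m=x)\xi_x\,|\,\F_m]\geq p$. Since $\sum_x\mu_{\omega,m}^\beta(X_m=x)\xi_x\in[0,1]$ and on the event $\{\sum_x\mu_{\omega,m}^\beta(X_m=x)\xi_x > 1/2\}$ we have $W_{m+n}[\1_{X_{m+n}\in A}]/W_m > 2a\cdot\tfrac12 = a$, the first-moment/Markov bound $\P(Y>1/2)\geq 2\E[Y]-1$ for $Y\in[0,1]$ gives
\begin{align*}
\P\Big(\frac{W_{m+n}[\1_{X_{m+n}\in A}]}{W_m}>a\,\Big|\,\F_m\Big)\;\geq\;2\E\Big[\textstyle\sum_x\mu_{\omega,m}^\beta(X_m=x)\xi_x\,\Big|\,\F_m\Big]-1\;\geq\;2p-1,
\end{align*}
which is exactly the claim.

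The main point to get right — and the only place any real care is needed — is the independence and distributional identification: that $(\widehat W^{(x)})_{|x|_1\le m}$ is a family independent of $\F_m$ with $\widehat W^{(x)}\overset{d}{=}W_n[\1_{X_n\in A-x}]$, which follows from the i.i.d. structure of the environment and the definition \eqref{eq:def_Wtx} of the shifted partition function, together with the observation that the polymer weights at time $m$ are $\F_m$-measurable. Everything else is the elementary inequality $\P(Y>1/2)\ge 2\E[Y]-1$ for $[0,1]$-valued $Y$ (equivalently $2a\cdot\tfrac12=a$) applied conditionally. I would also note that the restriction to $|x|_1\le m$ is automatic because $\mu_{\omega,m}^\beta(X_m=x)=0$ for $|x|_1>m$ under the simple random walk, so the $\min$ over $|x|_1\le m$ is all that enters.
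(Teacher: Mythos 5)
Your proof is correct and reaches the bound by the same decomposition as the paper, expressing $W_{m+n}[\1_{X_{m+n}\in A}]/W_m$ as the $\mu_{\omega,m}^\beta$-average of the $\F_m$-independent variables $\widehat W^{(x)}=W_n[\1_{X_n\in A-x}]\circ\theta_{m,x}$. The only variation is in how the convex combination is handled: the paper introduces the concave cap $g_a(z)=(z/a-1)\wedge 1$, sandwiches $\1_{(a,\infty)}\ge g_a\ge 2\1_{(2a,\infty)}-1$ on $\R_+$, and applies Jensen's inequality to pass $g_a$ inside the sum, while you instead note that the sum already exceeds $a$ once more than half of the $\mu_{\omega,m}^\beta$-mass lands on $\{\widehat W^{(x)}>2a\}$, and then finish with the elementary bound $\P(Y>\tfrac12)\ge 2\E[Y]-1$ for $[0,1]$-valued $Y$. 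These are two derivations of the same pointwise inequality $\1_{(a,\infty)}\bigl(\sum_x p_x z_x\bigr)\ge\sum_x p_x\bigl(2\1_{(2a,\infty)}(z_x)-1\bigr)$ for $z_x\ge 0$ and probability weights $p_x$, so the two proofs are essentially equivalent; yours makes the ``enough mass on good sites'' picture explicit, while the paper's is slightly more compact.
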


\begin{proof}
Consider the concave function $g_a(x)\coloneqq (x/a-1)\wedge 1$ and note that, on $\R_+$,
\begin{align}\label{eq:approx}
\1_{(a,\infty)}\geq g_a\geq 
2\1_{(2a,\infty)}-1.
\end{align}
Hence
\begin{align*}
&\P\left(\frac{W_{m+n}[\1_{X_{m+n}\in A}]}{W_m}>a\Big|\F_m\right)\\
&=\E\left[\1_{(a,\infty)}\Big(\textstyle\sum_{|x|_1\leq m}\mu_{\omega,m}^\beta(X_m=x) W_n[\1_{X_n\in A-x}]\circ\theta_{m,x}\Big)\Big|\F_m\right]\\
&\geq \E\left[g_a\Big(\textstyle\sum_{|x|_1\leq m}\mu_{\omega,m}^\beta(X_m=x) W_n[\1_{X_n\in A-x}]\circ\theta_{m,x}\Big)\Big|\F_m\right]\\
&\geq \E\left[\textstyle\sum_{|x|_1\leq m}\mu_{\omega,m}^\beta(X_m=x)g_a\left(W_n[\1_{X_n\in A-x}]\circ\theta_{m,x}\right)\Big|\F_m\right]\\
&\geq \min_{|x|_1\leq m}\E\left[g_a\left(W_n[\1_{X_n\in A-x}]\right)\right]\\
&\geq \min_{|x|_1\leq m}2\P\left(W_n[\1_{X_n\in A-x}]>2a\right)-1.
\end{align*}
We used \eqref{eq:approx}  in the first and last inequality while the second inequality is Jensen's inequality.
\end{proof}

\subsection{Construction of \localization sites}\label{sec:local}

In this section, we prove two technical estimates, Lemmas~\ref{lem:heavy_existence} and \ref{lem:heavy_prop}. Intuitively, they guarantee the existence of $n^{o(1)}$ \localization sites, as introduced in Section~\ref{sec:strategy}, i.e., space-time areas that are likely to be visited by polymers starting from $\{0\}\times[-n^{1/2},n^{1/2}]^d$. The definition of such an area is local, i.e. it depends on a space-time area of diameter $n^{o(1)}$, which means that we have independence between areas that are far apart and we can thus apply extreme value statistics, as explained in Section~\ref{sec:strategy}.

\smallskip A drawback of the local construction is that we need to ensure that our notion of \localizationn is not destroyed by the remainder of the environment. This is intuitively believable, since we only need the environment everywhere else to behave in a typical manner, but the formal proof is rather technical. 

\smallskip We start the construction by defining some constants that depend on $f$. The purpose of these is to define a set of time-space sites $\mathtt T_n\times\mathtt S_n$ such that the expectation of $f(X_0/\sqrt n)$ under the backward polymer measure started from $(t,x)$ (see \eqref{eq:bw_pm}) can be controlled uniformly in $(t,x)\in\mathtt T_n\times\mathtt S_n$, see the proof of Lemma~\ref{lem:heavy_prop} for details. First, there exists $z\in\R^d$ such that 
\begin{align}\label{eq:def_z}
	f*\varphi(z)\coloneqq \int_{\R^d}f(x)\varphi(z-x)\dd x\neq 0,
\end{align}
where $\varphi$ denotes the density of the standard normal distribution. Indeed, the Fourier transform $\hat\varphi$ of $\varphi$ is a again Gaussian and in particular non-zero everywhere, while $f$ is by assumption non-trivial, hence $\hat f\not\equiv 0$. It is well-known that the Fourier transform of $f * \varphi$ is the product of $\hat f$ and $\hat \varphi$. In particular, $\hat f\hat\varphi$ and consequently $f*\varphi$ are non-trivial, which yields \eqref{eq:def_z}. Next, we note that since $f$ is uniformly continuous, we find $\eeta\in(0,1)$ such that, for all $x,y\in\R^d$, 
\begin{align}\label{eq:def_eta}
|x-y|\leq2 \eeta\quad \implies\quad |f(x)-f(y)|\leq |f*\varphi(z)|/8.
\end{align}
Finally, we choose $\eeeta>0$ such that
\begin{align}\label{eq:def_eta2}
	\max\Big\{\big|1-\sqrt{ 1-\eeeta}\big|,\Big|1-\frac{1}{\sqrt{1-\eeeta}}\Big|\Big\}<\frac{\eeta}{\max\{L,\|z\|_\infty\}},
\end{align}
where $L$ is large enough that the support of $f$ is contained in the interior of $[-L,L]^d$. 

\smallskip Proceeding with the construction, let $\ell_n\coloneqq \lfloor \log^2(n)\rfloor$ and consider the grid $\mathtt T_n\times\mathtt S_n$, where 
\begin{align}\label{eq:def_ST}
	\mathtt T_n&\coloneqq  (\ell_n+1)\Z,\\
	\mathtt S_n&\coloneqq   \big((2\ell_n+1)\Z^d\big)\cap \big(z  n^{1/2} +\big[-\eeta n^{1/2},\eeta n^{1/2}\big]^d\big).
\end{align}
The elements of $\mathtt T_n\times\mathtt S_n$ constitute the starting locations and the purpose of ``$\ell_n$'' in \eqref{eq:def_ST} is to ensure that the field $(\cev W^{t,x}_{t-\ell_n})_{(t,x)\in\mathtt T_n\times \mathtt S_n}$ is independent (see Figure~\ref{fig:construction}), so that we can study the extreme values. More precisely, for $k=1,\dots,\eeeta n^{2\delta}/2$, we write
\begin{align*}
\mathtt T_n(k)\coloneqq \mathtt T_n\cap\left((1-\eeeta)n+\big[(2k-1)n^{1-2\delta},2kn^{1-2\delta}\big]\right).
\end{align*}
and introduce the time $T_k$ where $(W^{t,x}_{t-\ell_n})_{(t,x)\in\mathtt T_n(k)\times\mathtt S_n}$ achieves a near-maximum,
\begin{align*}
T_k\coloneqq \max\Big\{t\in \mathtt T_n(k):\exists y\in\mathtt S_n\text{ s.t. }\cev W_{t-\ell_n}^{t,y}\geq n^{\frac{2+d}{2\q}-\eps/2}\Big\}.
\end{align*}
The maximum of the empty set is defined to be $-\infty$. On $T_k>-\infty$, we define $Y_k$ as the location of the near-maximizer. This definition may however not be unique and, for technical reasons, it is convenient to choose $Y_k$ uniformly at random among the candidates, see Figure~\ref{fig:construction}. More precisely, we slightly enlarge the probability space to include an i.i.d. sequence $(U_k)_{k\in\N}$ whose marginals are uniformly distributed on $[0,1]$ and which is independent of everything else. Then, if there are $N$ sites $y\in\mathtt S_n$ such that $\cev W_{T_k-\ell_n}^{T_k,y}\geq n^{\frac{2+d}{2\q}-\eps/2}$, we define $Y_k$ to be the $\lceil NU_k\rceil$-largest such site in the lexicographical order on $\mathtt S_n$.  Let also
\begin{figure}[h]
	\includegraphics[width=.9\textwidth]{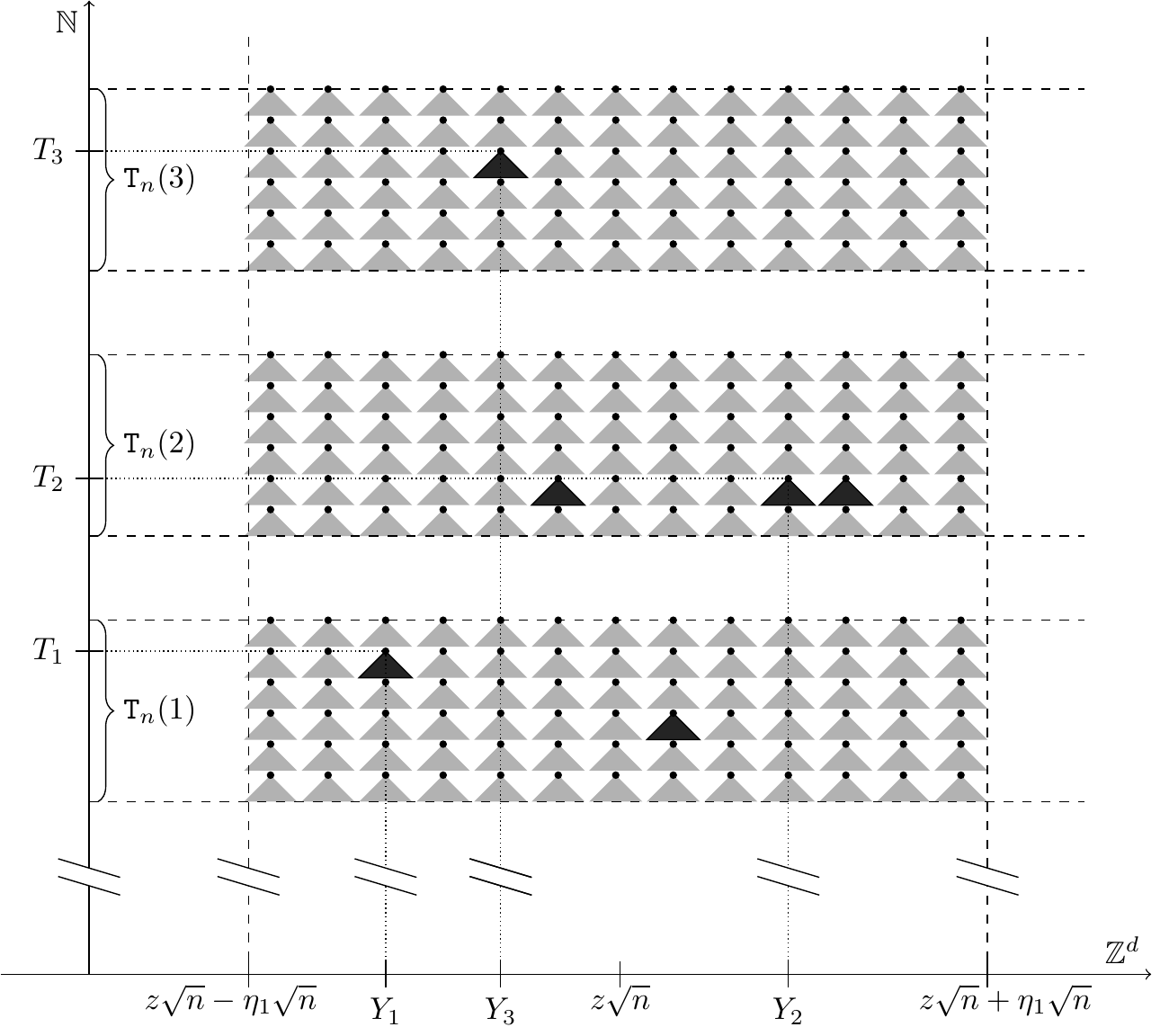}
	\caption{Illustration for the construction of \localization sites in $d=1$. The black dots represent the grid $\mathtt T_n\times \mathtt S_n$ and the shaded areas indicate the area accessible for the partition functions $(W^{t,x}_{t-\ell_n})_{t,x\in\mathtt T_n\times \mathtt S_n}$. By definition, the cones are disjoint and the partition functions are thus independent. The dark cones are the sites $(t,x)$ with $W^{t,x}_{t-\ell_n}\geq n^{(2+d)/2\q-\eps/2}$. The middle row illustrates the case where there more than one such maximizer and $Y_2$ is thus chosen at random among the three candidates. Note that no such draw occurs in the bottom row, since the maximizers have different time-coordinates and in this case the latest one is chosen. \label{fig:construction}}
\end{figure}
\begin{align*}
E_n\coloneqq \Big\{T_k>-\infty\text{ for all }k=1,\dots,\eeeta n^{2\delta}/2\Big\}.
\end{align*}
Note that on $E_n$ we have $n^{o(1)}$ sites $(T_i,Y_i)$ with $\cev W^{T_i,Y_i}_{T_i-\ell_n}=n^{\frac{2+d}{2\q}-o(1)}$. Except for the sub-polynomial error-term, this is the same order one can expect for an independent family of random variables with tail exponent $\q$. Some technical consequences of the construction are summarized in the next lemma:
\begin{lemma}\label{lem:properties}
The above construction satisfies the following properties:
\begin{itemize}
 \item[(i)] For $t\in\mathtt T_n$, $\{T_k\geq t\}$  is measurable with respect to
	 \begin{align*}
		 \F_{[t-\ell_n,\infty)\setminus \mathtt T_n}\coloneqq \sigma\big(\omega_{s,x}:s\in [t-\ell_n,\infty)\setminus \mathtt T_n\big).
	 \end{align*}
 \item[(ii)] Conditional on $\sigma(T_1,\dots,T_{\eeeta n^{2\delta}/2},Y_1,\dots,Y_{\eeeta n^{2\delta}/2})$, on $E_n$, the sequence
	 \begin{align}\label{eq:seq}
	 \big(\omega_{T_1,Y_1},\dots,\omega_{T_{\eeeta n^{2\delta}/2},Y_{\eeeta n^{2\delta}/2}}\big)
 \end{align}
 has the same law as the unconditioned environment, i.e., it is i.i.d. with law $\P(\omega_{0,0}\in\cdot)$.
\item[(iii)] Conditional on $E_n$, $(Y_1,\dots,Y_{\eeeta n^{2\delta}/2})$ is independent and uniformly distributed on $\mathtt S_n$.
\end{itemize}
\end{lemma}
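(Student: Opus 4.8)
Everything will follow from one geometric observation. By the definition \eqref{eq:def_reverse}, for $(t,y)\in\mathtt T_n\times\mathtt S_n$ the variable $\cev W^{t,y}_{t-\ell_n}$ is measurable with respect to $\sigma(\omega_{s,x}:t-\ell_n\le s<t,\ |x-y|_1\le t-s)$ -- the ``cone'' based at $(t,y)$ -- and all these cones are pairwise disjoint: since the grid spacing $2\ell_n+1$ of $\mathtt S_n$ exceeds $2\ell_n$, two cones with the same time coordinate never meet; and since $t\in\mathtt T_n=(\ell_n+1)\Z$ forces the occupied time-levels $\{t-\ell_n,\dots,t-1\}$ to lie strictly between two consecutive elements of $\mathtt T_n$, no cone ever touches a time-level in $\mathtt T_n$, and cones based at distinct times of $\mathtt T_n$ occupy disjoint time-levels. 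Because the intervals defining the blocks $\mathtt T_n(k)$ are separated by gaps $\gg\ell_n$, the families $(\cev W^{s,y}_{s-\ell_n})_{(s,y)\in\mathtt T_n(k)\times\mathtt S_n}$, $k=1,\dots,K$ with $K\coloneqq \eeeta n^{2\delta}/2$, are built from disjoint blocks of the i.i.d.\ environment, hence mutually independent, and within each $k$ the family is i.i.d.\ over $(s,y)$. Part (i) is then immediate: for $t\in\mathtt T_n$, $\{T_k\ge t\}=\bigcup_{s\in\mathtt T_n(k),\,s\ge t}\{\exists y\in\mathtt S_n:\cev W^{s,y}_{s-\ell_n}\ge n^{\frac{2+d}{2\q}-\eps/2}\}$ and every $\cev W^{s,y}_{s-\ell_n}$ occurring here is, by the observation, measurable with respect to $\F_{[t-\ell_n,\infty)\setminus\mathtt T_n}$.

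For part (ii), put $\mathcal G\coloneqq \sigma\big((U_k)_k\big)\vee\sigma(\omega_{s,x}:s\notin\mathtt T_n)$. By the observation, every $\cev W^{t,y}_{t-\ell_n}$ with $(t,y)\in\mathtt T_n\times\mathtt S_n$ is $\mathcal G$-measurable, hence so are $T_1,\dots,T_K$, $Y_1,\dots,Y_K$ and $E_n$, whereas $(\omega_{s,x})_{s\in\mathtt T_n}$ is independent of $\mathcal G$. On $E_n$ one has $T_k\in\mathtt T_n(k)$ and the $\mathtt T_n(k)$ are pairwise disjoint, so the sites $(T_1,Y_1),\dots,(T_K,Y_K)$ are pairwise distinct. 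The claim then follows from a routine conditioning computation: for bounded measurable $\phi$ and any admissible deterministic $(t_\bullet,y_\bullet)$ -- necessarily with distinct times -- the event $\{(T_\bullet,Y_\bullet)=(t_\bullet,y_\bullet)\}\in\mathcal G$ is independent of the distinct i.i.d.\ variables $(\omega_{t_1,y_1},\dots,\omega_{t_K,y_K})$, so $\E\big[\phi(\omega_{t_1,y_1},\dots,\omega_{t_K,y_K})\1_{(T_\bullet,Y_\bullet)=(t_\bullet,y_\bullet)}\big]=\E\big[\phi(\omega^{(1)},\dots,\omega^{(K)})\big]\,\P\big((T_\bullet,Y_\bullet)=(t_\bullet,y_\bullet)\big)$ with $\omega^{(i)}$ i.i.d.\ copies of $\omega_{0,0}$; summing over $(t_\bullet,y_\bullet)$ and restricting to $E_n$ yields the stated conditional law.

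For part (iii), fix $k$ and set $A_k\coloneqq \{y\in\mathtt S_n:\cev W^{T_k,y}_{T_k-\ell_n}\ge n^{\frac{2+d}{2\q}-\eps/2}\}$, the set of near-maximizers. Since the $\mathtt T_n(k)$-blocks are independent and the $U_k$ are independent of everything, the triples $(T_k,A_k,U_k)$ are independent over $k$, and $Y_k$ is a function of $(T_k,A_k,U_k)$; so it suffices to identify the law of $Y_k$ given $\{T_k>-\infty\}$. Conditioning further on $\{T_k=t\}$ conditions $(\cev W^{t,y}_{t-\ell_n})_{y\in\mathtt S_n}$ only on the symmetric event $\{\max_y\cev W^{t,y}_{t-\ell_n}\ge n^{\frac{2+d}{2\q}-\eps/2}\}$ (the constraint at times $s>t$ in $\mathtt T_n(k)$ being independent of the time-$t$ cone family), so this family remains exchangeable in $y$; hence, given $|A_k|=N$, the set $A_k$ is uniform among $N$-subsets of $\mathtt S_n$. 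As $U_k\sim\mathrm{Unif}[0,1]$ is independent, $\lceil NU_k\rceil$ is uniform on $\{1,\dots,N\}$, so $Y_k$ is a uniformly chosen element of $A_k$: for any $y_0\in\mathtt S_n$, $\P(Y_k=y_0\mid |A_k|=N,T_k=t)=\tfrac{N}{|\mathtt S_n|}\cdot\tfrac1N=\tfrac1{|\mathtt S_n|}$, independently of $N$ and $t$. Thus $Y_k$ is $\mathrm{Unif}(\mathtt S_n)$ given $\{T_k>-\infty\}$, and independence over $k$ gives the claim.

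The difficulty here is organizational rather than conceptual: keeping the conditioning bookkeeping honest, specifically (a) that the events defining $T_k$ and $A_k$ at a given time-level genuinely do not involve the environment at that level -- which is what makes $(T_\bullet,Y_\bullet)$ independent of the ``tip'' variables $\omega_{T_k,Y_k}$ in (ii) -- and (b) that conditioning on $\{T_k=t\}$ preserves exchangeability of the time-$t$ cone family in (iii). Both reduce to the disjointness picture of the first paragraph, so once that is set up the remaining steps are mechanical.
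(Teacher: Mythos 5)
Your proof is correct and follows essentially the same approach as the paper's, resting on the same geometric observation: the "cones" on which the variables $\cev W^{t,y}_{t-\ell_n}$ depend are pairwise disjoint (thanks to the $\ell_n+1$ time spacing of $\mathtt T_n$ and the $2\ell_n+1$ space spacing of $\mathtt S_n$), and none of them touches a time level in $\mathtt T_n$, so the tip variables $\omega_{t,y}$ with $t\in\mathtt T_n$ are fresh. The paper's own proof is considerably terser — it asserts the measurability of $\cev W^{t,x}_{t-\ell_n}$ with respect to the cylinder $\sigma(\omega_{s,y}:s\in[t-\ell_n,t),|x-y|\leq\ell_n)$, states that the "$+1$" in $\mathtt T_n$ makes $\{(T_k,Y_k)=(t,y)\}$ independent of $\omega_{t,y}$, invokes disjointness of blocks for (ii), and invokes exchangeability of $(\cev W^{t,y}_{t-\ell_n})_{y\in\mathtt S_n}$ for (iii) without spelling out how the random tie-break $U_k$ yields uniformity. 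Your write-up supplies the details the paper leaves implicit: the explicit $\sigma$-field $\mathcal G$ that cleanly separates the selection mechanism from the tip variables in (ii), and the observation in (iii) that $\lceil N U_k\rceil$ being uniform and independent of $A_k$ washes out any non-uniformity of lexicographic rank, so that $\P(Y_k=y_0\mid |A_k|=N,T_k=t)=1/|\mathtt S_n|$. These are exactly the right bookkeeping steps, and I find no gap.
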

\begin{proof}
	Recall from the definition \eqref{eq:def_reverse} that $H_{[t-\ell_n,t)}$ does not include the environment at time $t$, hence $W^{t,x}_{t-\ell_n}$ is measurable with respect to $\sigma(\omega_{s,y}:s\in[t-\ell_n,t),|x-y|\leq \ell_n)$. Property (i) is now clear. This measurability, together with the extra ``$+1$'' in the definition of $\mathtt T_n$, also ensures that $\{(T_k,Y_k)=(t,y)\}$ is independent of $\omega_{t,y}$, and hence $\omega_{T_k,Y_k}$ has the same law as $\omega_{0,0}$. To complete the proof of Property (ii), note that the coordinates of the sequence \eqref{eq:seq} is are defined from disjoint blocks of the environment, and hence independent. For Property~(iii), it is enough to note that the definition of $\mathtt S_n$ ensures that $\cev W^{t,x}_{t-\ell_n}$ and $\cev W^{t,y}_{t-\ell_n}$ are defined from disjoint parts of the environment for $x\neq y\in \mathtt S_n$. Thus the field $(W^{x,t}_{t-\ell_n})_{x\in\mathtt S_n}$ is independent, and in particular exchangeable.
\end{proof}

\smallskip Note that the above definitions depend on $\delta$, even though our notation does not reflect it. In the next lemma we choose $\delta$. 
\begin{lemma}\label{lem:heavy_existence}
Assume $d\geq 3$ and \eqref{eq:WD}. For every $\eps>0$ there exists $\delta>0$ such that 
\begin{align}\label{eq:PE} 
\lim_{n\to\infty}\P(E_n^c)=0.
\end{align}
\end{lemma}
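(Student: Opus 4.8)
The plan is to realize $E_n$ as an intersection of many independent events, one for each block $\mathtt T_n(k)\times\mathtt S_n$, and to bound the failure probability of each block using the lower tail bound from Proposition~\ref{prop:ldp_lower} together with the number of near-independent space-time sites inside the block. For $k=1,\dots,\eeeta n^{2\delta}/2$, let $F_k\coloneqq\{T_k>-\infty\}$; by Lemma~\ref{lem:properties}(i) and the disjointness of the backward cones (Figure~\ref{fig:construction}), the events $F_1,\dots,F_{\eeeta n^{2\delta}/2}$ are independent, so $\P(E_n^c)\leq \sum_k\P(F_k^c)=\frac{\eeeta}{2}n^{2\delta}\,\P(F_1^c)$ and it suffices to show $\P(F_1^c)=o(n^{-2\delta})$.

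First I would count the independent trials available inside one block. The block $\mathtt T_n(k)$ has $\asymp n^{1-2\delta}/\ell_n$ admissible times, spaced $\ell_n+1$ apart, and $\mathtt S_n$ has $\asymp (\eeta n^{1/2}/\ell_n)^d\asymp n^{d/2-o(1)}$ admissible sites spaced $2\ell_n+1$ apart; since $|(t,x)-(s,y)|_\infty\geq \ell_n$ for distinct grid points, the random variables $\cev W_{t-\ell_n}^{t,y}$ over $(t,y)$ in the block are i.i.d.\ copies of $W_{\ell_n}^\beta$ (here the reverse and forward partition functions agree in law). So $F_1^c$ is the event that $N_n\coloneqq n^{2-2\delta}/\ell_n^{1+d}=n^{2-2\delta-o(1)}$ (as $\delta<1/4$) independent copies of $W_{\ell_n}^\beta$ are all $<n^{(2+d)/(2\q)-\eps/2}$. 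Applying Proposition~\ref{prop:ldp_lower} with a suitable small $\delta'>0$: taking $t=\ell_n=\log^2 n$, we get $x>0$ and, for $n$ large, $\P(W_{\ell_n}^\beta\geq e^{\ell_n x})\geq e^{-\ell_n x\q(1+\delta')}$. The threshold $e^{\ell_n x}=e^{x\log^2 n}=n^{x\log n}$ grows faster than any power of $n$, hence in particular exceeds $n^{(2+d)/(2\q)-\eps/2}$ for large $n$, so the single-trial success probability is at least $p_n\coloneqq e^{-\ell_n x\q(1+\delta')}=n^{-x\q(1+\delta')\log n}$.

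Then $\P(F_1^c)\leq (1-p_n)^{N_n}\leq \exp(-N_n p_n)=\exp\!\big(-n^{2-2\delta-o(1)}\cdot n^{-x\q(1+\delta')\log n}\big)$. The exponent here is $n^{2-2\delta-o(1)-x\q(1+\delta')\log n}$, and because $x\q(1+\delta')\log n\to\infty$ this product tends to $0$, not $\infty$ — so this naive choice of threshold wastes the available trials. The fix is to choose the threshold $e^{\ell_n x}$ much smaller, i.e.\ to apply Proposition~\ref{prop:ldp_lower} at the scale dictated by $\ell_n$ so that $e^{\ell_n x}\asymp n^{(2+d)/(2\q)}$ exactly; concretely, set $x=x_n\coloneqq \frac{2+d}{2\q}\cdot\frac{\log n}{\ell_n}=\frac{2+d}{2\q\log n}\to 0$. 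For this one needs a \emph{uniform-in-small-$x$} version of Proposition~\ref{prop:ldp_lower}: there is $\delta'>0$ and, for every small enough $x>0$, a $t_0(x)$ with $\P(W_t^\beta\geq e^{tx})\geq e^{-tx\q(1+\delta')}$ for $t\geq t_0(x)$; since $\ell_n x_n=\frac{2+d}{2\q}\log n\to\infty$ while $x_n\to0$, one must check (going back into the G\"artner–Ellis argument, or into the definition \eqref{eq:def_a} of $\a$) that the convergence rate is strong enough that $\ell_n=\log^2 n$ is admissible — this is where the precise choice $\ell_n=\lfloor\log^2 n\rfloor$ (rather than $\ell_n\asymp\log n$) is used. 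With this threshold, the single-trial probability is $p_n\geq e^{-\ell_n x_n\q(1+\delta')}=n^{-\frac{(2+d)(1+\delta')}{2}}$, so $N_n p_n\geq n^{2-2\delta-o(1)-\frac{(2+d)(1+\delta')}{2}}$.

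It remains to choose $\delta$ and $\delta'$. We need $2-2\delta-\frac{(2+d)(1+\delta')}{2}-o(1)>2\delta$ strictly, i.e.\ $4\delta<2-\frac{(2+d)(1+\delta')}{2}$; this has a positive solution $\delta>0$ precisely when $2-\frac{2+d}{2}>0$, i.e.\ $d<2$ — which \emph{fails} for $d\geq3$. So the counting must be improved: the point is that $n^{(2+d)/(2\q)}$ is \emph{not} the scale at which the block maximum lives; rather, with $N_n=n^{2-2\delta-o(1)}$ i.i.d.\ copies and tail exponent $\q$, the maximum is of order $N_n^{1/\q}=n^{(2-2\delta)/\q-o(1)}$, and the exponent $\frac{2+d}{2\q}-\eps/2$ in the definition of $T_k$ is chosen precisely so that $\frac{2-2\delta}{\q}>\frac{2+d}{2\q}$, i.e.\ $2-2\delta>\frac{2+d}{2}$, i.e.\ $\delta<\frac{2-d/2}{2}=\frac{4-d}{4}$... which again needs $d<4$. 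The honest statement is therefore: one applies Proposition~\ref{prop:ldp_lower} with $x=x_n$ tuned so that $e^{\ell_n x_n}=n^{(2+d)/(2\q)-\eps/2}$, giving single-trial probability $\geq n^{-\frac{2+d}{2}(1-\frac{\q\eps}{2+d})(1+\delta')+o(1)}\geq n^{-\frac{2+d}{2}+\frac{\q\eps}{4}-o(1)}$ after choosing $\delta'$ small depending on $\eps$; then $N_n p_n\geq n^{2-2\delta-\frac{2+d}{2}+\frac{\q\eps}{4}-o(1)}$, and one needs to apply this with the right $N_n$.

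\textbf{Main obstacle.} The crux — and the step I expect to be hardest — is the passage from the \emph{asymptotic} lower deviation estimate in Proposition~\ref{prop:ldp_lower} (which holds for $t\geq t_0$ with $t_0$ depending on a fixed $\delta$, and at a fixed deviation scale $x$) to a \emph{quantitatively controlled} estimate valid at the slowly-growing time-horizon $t=\ell_n=\log^2 n$ and at the $n$-dependent deviation level forced by the target exponent $\frac{2+d}{2\q}$. This requires reopening the G\"artner–Ellis argument: one must verify that $\frac1t\log\E[(W_t^\beta)^p]\to\a(p)$ fast enough (e.g.\ with an error $o(1/\log t)$, which is plausible from the subadditivity $\E[W_{t}^p]\leq\E[W_{\ell_n}^p]^{t/\ell_n}$ used in Proposition~\ref{prop:a}) that the Legendre-transform bound survives at scale $t=\log^2 n$. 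Granting such a uniform version, the remaining bookkeeping — choosing $\delta$ small enough (depending on $\eps$ and $d$, so that $2-2\delta$ comfortably exceeds $\frac{2+d}{2}$ after inserting the $\frac{\q\eps}{4}$ slack, using $\p\leq\q$ and $\q<2$ from Proposition~\ref{prop:a} only implicitly), and then combining $\P(E_n^c)\leq\frac{\eeeta}{2}n^{2\delta}e^{-N_np_n}$ with $N_np_n\geq n^{2\delta+c}$ for some $c>0$ — is routine and gives $\P(E_n^c)\to0$.
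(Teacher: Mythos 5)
Your proposal follows the right overall template (per-block trials, union bound over $k$, tail bound from Proposition~\ref{prop:ldp_lower}), but it contains an arithmetic error that then derails the whole analysis, and it never actually resolves the ``main obstacle'' it correctly identifies. First, the counting: you correctly note that one block has $\asymp n^{1-2\delta}/\ell_n$ admissible times and $\asymp n^{d/2}/\ell_n^d$ admissible sites, but you then write the product as $N_n\asymp n^{2-2\delta}/\ell_n^{1+d}$ instead of the correct $N_n\asymp n^{1+d/2-2\delta}/\ell_n^{1+d}$. Since $1+d/2>2$ for $d\geq 3$, your wrong value is strictly smaller, and this is exactly what produces the spurious ``needs $d<2$'' (and later ``$d<4$'') obstruction. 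With the corrected count, and using the threshold $n^{(2+d)/(2\q)-\eps/2}$ (note $1+d/2=(2+d)/2$), one gets $N_np_n\gtrsim n^{-2\delta+\q\eps/4-o(1)}$, which does beat the union-bound factor $n^{2\delta}$ once $\delta$ is small relative to $\q\eps$. So the dimension is not an obstruction at all.

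Second, and more substantively, your ``main obstacle'' paragraph is an honest admission of a gap that you do not close: you apply Proposition~\ref{prop:ldp_lower} at time $t=\ell_n=\lfloor\log^2n\rfloor$ with a deviation level $x_n\to 0$, which the proposition does not cover, and then you ``grant'' a uniform-in-$x$ version without proving it. The paper avoids this problem entirely by introducing an intermediate time scale $\ell_n'\asymp\log n$ chosen proportional to $\log n$ with a constant that depends only on $\delta$, so that $e^{\ell_n'x}$ is the desired power of $n$ for the \emph{fixed} $x=x(\delta)$ supplied by Proposition~\ref{prop:ldp_lower}; since $\ell_n'\to\infty$, the proposition applies verbatim and gives $\P(\cev W^{t,y}_{t-\ell_n'}\geq n^{(1+d/2-3\delta)/(\q(1+\delta))}e^{-x})\geq n^{-1-d/2+3\delta}$. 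The remaining gap between $\ell_n'$ and $\ell_n$ is bridged by Lemma~\ref{lem:concave}: since in weak disorder $\P(\inf_mW_m>2a)$ can be made close to $1$, one has $\P\big(\cev W^{t,y}_{t-\ell_n}/\cev W^{t,y}_{t-\ell_n'}\geq a\big|\cdot\big)\geq 2\P(W_{\ell_n-\ell_n'}\geq 2a)-1\geq\tfrac12$, at the cost of only a multiplicative constant $a$, which is absorbed by the $-\eps/2$ slack and the choice of $\delta$. This concavity trick is the missing idea in your argument — without it (or a genuine uniform LDP, which you do not establish) your proof is incomplete. The rest (union bound over $k$; measurability/independence of the events $\{T_k>-\infty\}$ from Lemma~\ref{lem:properties}) you have essentially right, though note the union bound requires no independence.
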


\begin{proof}
For $\delta>0$ to be chosen later, let
\begin{align*}
\ell_n'\coloneqq \Big\lfloor{\frac{1+d/2-3\delta}{x\q(1+\delta)}\log n}\Big\rfloor
\end{align*}
where $x$ is the value from Proposition~\ref{prop:ldp_lower} corresponding to $\delta$. Then, for $n$ large enough,
\begin{equation}\label{eq:sdfdsf}
\begin{split}
\P\Big(\cev W_{t-\ell_n'}^{t,y}\geq n^{\frac{1+d/2-3\delta}{\q(1+\delta)}}e^{-x}\Big)
&\geq \P\Big( W_{\ell_n'}\geq e^{\ell_n'x}\Big)\\
&\geq e^{-x\q \ell_n'(1+\delta)}\\
&\geq n^{-1-d/2+3\delta}.
\end{split}
\end{equation}
Moreover, by \eqref{eq:WD} we can choose $a>0$ small enough that $\P(\inf_{m\in\N}W_m>2a)>\frac{1}{4}$, hence
\begin{equation}\label{eq:sdfdsf2}
\begin{split}
&\P\Big(\cev W_{t-\ell_n}^{t,y}\geq n^{\frac{1+d/2-3\delta}{\q(1+\delta)}}e^{-x}a\Big|\cev W_{t-\ell_n'}^{t,y}\geq n^{\frac{1+d/2-3\delta}{\q(1+\delta)}}e^{-x}\Big)\\
&\geq \P\Big(\frac{\cev W_{t-\ell_n}^{t,y}}{\cev W_{t-\ell_n'}^{t,y}}\geq a\Big|\cev W_{t-\ell_n'}^{t,y}\geq n^{\frac{1+d/2-3\delta}{\q(1+\delta)}}e^{-x}\Big)\\
&\geq 2\P(W_{\ell_n-\ell_n'}\geq 2a)-1\\
&\geq \frac 12,
\end{split}
\end{equation}
where we have used Lemma~\ref{lem:concave} in the second inequality. Combining \eqref{eq:sdfdsf} and \eqref{eq:sdfdsf2}, we have
\begin{equation}\label{eq:combined}
	\begin{split}\P\big(T_k>-\infty\big)&\geq 1-\Big(1-\frac 12n^{-1-d/2+3\delta}\Big)^{|\mathtt T_n(k)\times \mathtt S_n|}\\&\geq 1-\Big(1-\frac 12n^{-1-d/2+3\delta}\Big)^{cn^{1+d/2-2\delta}/\ell_n^{1+d/2}}\\
&\geq 1-e^{-c n^{\delta/2}}
	\end{split}
\end{equation}
If we choose $\delta>0$ such that ${\frac{1+d/2-3\delta}{\q(1+\delta)}}>\frac{2+d}{2\q}-\eps/2$, then $n^{\frac{1+d/2-3\delta}{\q(1+\delta)}}e^{-x}a\geq n^{\frac{2+d}{2\q}-\eps/2}$ holds for all $n$ large enough and thus \eqref{eq:PE} follows from \eqref{eq:combined} and the union bound.
\end{proof}

The next lemma shows that our notion of an \localization site has high probability to be visited by polymers of length $n$ starting from $\{0\}\times\Z^d$. Recall the definitions of $W^{s,x}_t$ and $\cev W^{t,x}_s$ from \eqref{eq:def_Wtx} and \eqref{eq:def_reverse}. We introduce notation similar to \eqref{eq:def_restricted} for a reverse partition function and reverse polymer measure,
\begin{align}
	\cev W_{s}^{t,x}[f(X_0/\sqrt n)]&\coloneqq {\cev E}{}^{t,x}\left[e^{\beta H_{[s,t)}(\omega,X)-(t-s)\lambda(\beta)}f(X_0/\sqrt n)\right],\label{eq:def_reverse_f}\\
	\cev \mu_{s}^{t,x}[f(X_0/\sqrt n)]&\coloneqq \frac{\cev W_s^{t,x}[f(X_0/\sqrt n)]}{\cev W_s^{t,x}}.\label{eq:bw_pm}
\end{align}

\begin{lemma}\label{lem:heavy_prop}
	Assume $d\geq 3$ and \eqref{eq:WD} and let $f$ be as in Theorem~\ref{thm:main}. There exists $(p_n)_{n\in\N}$ with $\lim_{n\to\infty}p_n=0$ such that, for all $n$ large enough and $k=1,\dots,\eeeta n^{2\delta}/2$,
\begin{align}
\P\Big(W_{n}^{T_k,Y_k}\leq n^{-\eps/4},T_k>-\infty\Big)&\leq p_n\label{eq:PEEE}
\end{align}
and for all $t\in\mathtt T_n(k)$, almost surely on $T_k=t$,
\begin{align}
\P\Big(\big|\cev W^{T_k,Y_k}_1\big[f(X_0/\sqrt n)\big]\big|\leq n^{\frac{2+d}{2\q}-\frac 58\eps}\Big|\F_{[t-\ell_n,\infty)}\Big)&\leq p_n.\label{eq:PEEEE}
\end{align}
\end{lemma}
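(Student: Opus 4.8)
The plan is to establish the two bounds \eqref{eq:PEEE} and \eqref{eq:PEEEE} separately, both by exploiting that, conditionally on the location $(T_k,Y_k)$, the backward partition function $\cev W^{T_k,Y_k}_{T_k-\ell_n}$ is large (it is $\geq n^{\frac{2+d}{2\q}-\eps/2}$ by the very definition of $T_k$) while the rest of the environment is in typical position. For \eqref{eq:PEEE}, I would write the forward partition function started from $(T_k,Y_k)$ as a sum over the path position at time $T_k+\ell_n$, restricted to the event that the walk stays near $Y_k$ during $[T_k,T_k+\ell_n]$; the contribution of that piece is, up to a factor coming from the local environment near $(T_k,Y_k)$ which is controlled by \eqref{eq:bounded} and the integrability \eqref{eq:exp_mom}, bounded below by a multiple of $\inf_m W_m$ evaluated at a point shifted by $(T_k+\ell_n, \cdot)$. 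By \eqref{eq:WD} this infimum is positive with probability bounded away from zero, and a Borel--Cantelli / union-bound argument over the (polynomially many) choices of $k$ combined with Lemma~\ref{lem:properties}(ii)--(iii) (which makes these contributions effectively independent of the conditioning) gives that $W_n^{T_k,Y_k}$ fails to be $\geq n^{-\eps/4}$ with vanishing probability; here one is helped by the fact that the exponent $-\eps/4$ is a mild requirement and $n$ is much larger than $\ell_n$.

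For \eqref{eq:PEEEE}, the idea is that $\cev W^{T_k,Y_k}_1[f(X_0/\sqrt n)]$ factorizes approximately as $\cev W^{T_k,Y_k}_{T_k-\ell_n}$ times the expectation of $f(X_0/\sqrt n)$ under the backward polymer measure $\cev\mu^{T_k,Y_k}_1$, and the latter expectation is, by the central limit theorem in probability (Theorem~\ref{thmx:phase}(iii), applied to the backward walk), close to $f*\varphi(Y_k/\sqrt n)$. This is precisely why the constants $z, \eeta, \eeeta$ and the grid $\mathtt S_n$ were chosen as in \eqref{eq:def_z}--\eqref{eq:def_eta2}: for $Y_k \in \mathtt S_n$ we have $Y_k/\sqrt n$ within $\eeta$ of $z$, and by \eqref{eq:def_z} and \eqref{eq:def_eta} the quantity $f*\varphi(Y_k/\sqrt n)$ stays bounded away from $0$ by a fixed constant (roughly $|f*\varphi(z)|/2$). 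Combined with the lower bound $\cev W^{T_k,Y_k}_{T_k-\ell_n}\geq n^{\frac{2+d}{2\q}-\eps/2}$ and the fact that peeling off the last $\ell_n$ steps of the backward walk only costs a sub-polynomial factor (handled via \eqref{eq:exp_mom} and, crucially, the uniform-integrability / $L^1$-convergence of $W_m$ which prevents $\cev W^{T_k,Y_k}_1/\cev W^{T_k,Y_k}_{T_k-\ell_n}$ from being atypically small), this yields $|\cev W^{T_k,Y_k}_1[f(X_0/\sqrt n)]| \geq n^{\frac{2+d}{2\q}-\eps/2}\cdot c \cdot n^{-o(1)} \geq n^{\frac{2+d}{2\q}-\frac58\eps}$ with probability $\to 1$, conditionally on $\F_{[t-\ell_n,\infty)}$ and on $T_k=t$.

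The main obstacle, I expect, is making the CLT-in-probability genuinely uniform enough to be useful here: Theorem~\ref{thmx:phase}(iii) is a statement about convergence in $\P$-probability for a fixed starting configuration and a fixed time horizon, whereas we need it simultaneously for the random starting site $Y_k$, the random time $T_k$, and with the time horizon $T_k$ itself scaling like $n$. This requires a quantitative input — one needs to know that $\cev\mu^{t,y}_1[f(X_0/\sqrt n)]$ is close to $f*\varphi(y/\sqrt n)$ with a probability that does not deteriorate as $(t,y)$ ranges over $\mathtt T_n(k)\times\mathtt S_n$, and then take a union bound over these $n^{O(1)}$ choices, which forces the error probability in the CLT to be at least polynomially small. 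One route is to not use Theorem~\ref{thmx:phase}(iii) as a black box but to re-derive the needed estimate with explicit control, using that $W^{0,\cdot}_\infty$ exists as an $L^1$ limit and a second-moment or concentration estimate on $\sum_x \varphi(x/\sqrt n)(W^{0,x}_m - \mathbb E[\cdots])$ of the type appearing in Theorem~\ref{thmx:ew}(i); the interplay between the time-scale $t\sim n$, the diffusive scale $\sqrt n$ for $\mathtt S_n$, and the small scale $\ell_n$ is where the care is needed, but the separation $\ell_n = \lfloor\log^2 n\rfloor \ll n$ gives enough room.
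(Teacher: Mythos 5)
Your high-level reading is right — both estimates are proved by exploiting that the definitional lower bound $\cev W^{T_k,Y_k}_{T_k-\ell_n}\geq n^{\frac{2+d}{2\q}-\eps/2}$ is large while the rest of the environment is typical — but there are three concrete gaps where the approach you sketch would not go through as stated.

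First, your factorization for \eqref{eq:PEEEE} does not respect the conditioning. You write $\cev W^{T_k,Y_k}_1[f]\approx \cev W^{T_k,Y_k}_{T_k-\ell_n}\cdot\cev\mu_1^{T_k,Y_k}[f]$ and propose to handle the second factor by the CLT in probability, but $\cev\mu_1^{t,y}[f]$ depends on the environment in $[t-\ell_n,t)$, which is part of $\F_{[t-\ell_n,\infty)}$ — so it is \emph{not} independent of the conditioning, and you cannot apply the unconditional CLT to it directly. The paper instead decomposes at time $t-\ell_n$: write $\cev W^{t,y}_1[f]/\cev W^{t,y}_{t-\ell_n}=\sum_{x}\cev\mu^{t,y}_{t-\ell_n}(X_{t-\ell_n}=x)\,\cev W^{t-\ell_n,x}_1[f]$, where the weights are $\F_{[t-\ell_n,\infty)}$-measurable and the partition functions $\cev W^{t-\ell_n,x}_1[f]$ are independent of $\F_{[t-\ell_n,\infty)}$. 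This is the structural step your sketch is missing, and it also forces one to deal with a sign-cancellation issue you do not mention: the summands $\cev W^{t-\ell_n,x}_1[f]$ can have mixed signs, so the sum can be small even when each term is not, and the paper has to split according to whether each $\cev\mu^{t-\ell_n,x}_1[f]$ is close to $f*\varphi(z)$ (the events $A_{t-\ell_n,x}$) before the ``same sign'' argument applies.

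Second, the obstacle you identify in the last paragraph — that the CLT must be made quantitative, with a polynomially small error, to survive a union bound over $n^{O(1)}$ choices of $(t,y)$ — is a misdiagnosis, and the workaround you propose (re-deriving a quantitative CLT) is unnecessary. Because the bad event has been recast as $\sum_x\nu(x)Z(x)>c$ with $\nu$ a deterministic probability measure (conditionally) and $Z(x)$ independent of the conditioning, a Markov/H\"older bound produces $\sum_x\nu(x)\,\E[|Z(x)|\mathbf 1_{A^c_{\cdot,x}}]\leq\sup_x(\cdots)$: a weighted average replaces the union bound, so only a $\emph{supremum}$ over $(t,x)$ of the failure probability $p_n'$ is needed, not a sum. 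By translation invariance the supremum over $x$ disappears entirely, and the supremum over $t\in[(1-\eeeta)n,n]$ tends to zero just because every such $t\to\infty$ with $n$; the qualitative Theorem~\ref{thmx:phase}(iii) is enough.

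Third, for \eqref{eq:PEEE} you do not address the genuine difficulty, which is \emph{inside} a single $k$, not across $k$. Conditioning on $T_k=t$ forces $\cev W^{s,x}_{s-\ell_n}<n^{\frac{2+d}{2\q}-\eps/2}$ for all later $s\in(t,\cdot]\cap\mathtt T_n$ and $x\in\mathtt S_n$ within the block, so $W_n^{t,y}$ is stochastically \emph{smaller} than the unconditioned martingale; Lemma~\ref{lem:properties}(ii)--(iii) concerns the weights at the sites $(T_j,Y_j)$ and the locations $Y_j$, not the environment elsewhere, and does not neutralise this bias. The paper handles it by restricting the forward walk to a spatial window of width $n^{(1-\delta)/2}$ around $Y_k$, so that the restricted partition function only sees $O(n^{(1-\delta)/2})$ of the constrained sites, and then pays a factor $q_n^{-3dn^{(1-\delta)/2}(i-1)}$ for dropping the remaining constraints, which is absorbed by the geometrically decaying $\P(T_k=t_i)$. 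A union bound over $k$ plays no role in this step.
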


\begin{proof}
Recall the definitions of $\eeta$, $\eeeta$, $z$ and $L$ in the beginning of Section~\ref{sec:local} and let $p\in(1,\p)$. We define an auxiliary quantity
\begin{align}\label{eq:p_n'}
	p_n'\coloneqq \sup_{t,x}\P\Big( \Big|\cev\mu_{1}^{t,x}[f(X_0/\sqrt n)]-f*\varphi(z)\Big|> \frac{|f*\varphi(z)|}{2}\Big)^{\frac{1}{2}(1-\frac{1}{p})},
\end{align}
where the supremum is over $(t,x)\in [(1-\eeeta)n,n]\times\big(zn^{1/2}+[-2\eeta n^{1/2},2\eeta n^{1/2}]\big)$, and 
\begin{equation}\label{eq:p_n}
\begin{split}
p_n\coloneqq &\sup_{t=0,\dots,n^{1-2\delta}}2\, \P\Big(W_t[\1_{X_s\in[-n^{(1-\delta)/2},n^{(1-\delta)/2}\text{ for all }s=0,..,t}]\leq n^{-\eps/8}\Big)\\
	     &+\sup_{t\in\N} 2\,\P\Big(W_t\leq \frac{4(2p_n'+n^{-\eps/8})}{|f*\varphi(z)|}\Big)+\sup_{t\in\N} 2\,\P\big(W_t\leq 2n^{-\eps/8}\big)\\
	     &+\Big(\frac{1}{2}|f*\varphi(z)|+\|f\|_\infty\Big)\sup_t \E[W_t^p]^{1/p}p_n'.
\end{split}
\end{equation}
Let us check that indeed $\lim_{n\to\infty}p_n=0$. For the first term, we estimate
\begin{align*}
&\P\big(W_t[\1_{X_s\in[-n^{(1-\delta)/2},n^{(1-\delta)/2}\text{ for all }s=0,..,t}]\leq n^{-\eps/8}\big)\\
&\leq \P(W_t\leq 2n^{-\eps/8})+\P\big(W_t[\1_{\sup_{s=0,\dots,t}|X_s|>n^{(1-\delta)/2}}]> n^{-\eps/8}\big).
\end{align*}
In the second term we apply the Markov inequality to get
\begin{align*}
\P\big(W_t[\1_{\sup_{s=0,\dots,t}|X_s|>n^{(1-\delta)/2}}]> n^{-\eps/8}\big)&\leq n^{\eps/8}\E\big[W_t[\1_{\sup_{s=0,\dots,t}|X_s|>n^{(1-\delta)/2}}]\big]\\
&=n^{\eps/8}P\big({\textstyle\sup_{s=0,\dots,t}}|X_s|>n^{(1-\delta)/2}\big)\\
&\leq n^{1-2\delta+\eps/8}\sup_{s=0,\dots,n^{1-2\delta}}P\big(|X_s|>n^{(1-\delta)/2}\big)
\end{align*}
By standard moderate deviation estimates, see \cite[Theorem~3.7.1]{DZ}, we see that the probability in the final line decays at a stretched exponential rate. 

\smallskip In view of \eqref{eq:WD}, to prove $\lim_{n\to\infty}p_n=0$ it is now enough to show $\lim_{n\to\infty}p_n'=0$. By Theorem~\ref{thmx:phase}(iii), we know that 
\begin{align*}
	&\sup_{t,x}\P\Big(\big|\cev\mu_{1}^{t,x}[f(X_0/\sqrt t+z-x/\sqrt t)]-f*\varphi(z)\big|>|f*\varphi(z)|/4\Big)\\
	&=\sup_{t\in[(1-\eeeta)n,n]}\P\Big(\big|\mu_{t-1}[f(X_t/\sqrt t+z)]-f*\varphi(z)\big|>|f*\varphi(z)|/4\Big)\\
	&\xrightarrow{n\to\infty}0.
\end{align*}
It is thus enough to show that, almost surely for all $t,x$ that appear in the supremum in \eqref{eq:p_n'},
\begin{align}\label{eq:close}
	\big|\cev\mu^{t,x}_1[f(X_0/\sqrt n)]-\cev\mu^{t,x}_1[f(X_0/\sqrt t+z-x/\sqrt t)]\big|\leq |f*\varphi(z)|/4.
\end{align}
Indeed, for all such $(t,x)$ and all $y\in\Z^d$,
\begin{align*}
	&\Big|f\Big(\frac{y}{\sqrt {n}}\Big)-f\Big(\frac{y-(x-z\sqrt {t})}{\sqrt n}\Big)\Big|\\
	&\leq\Big|f\Big(\frac{y}{\sqrt {n}}\Big)-f\Big(\frac{y}{\sqrt n}-\frac{x}{\sqrt n}+z\Big)\Big|+ |f*\varphi(z)|/8\\
									  &\leq |f*\varphi(z)|/4,
\end{align*}
where we have used \eqref{eq:def_eta2} and \eqref{eq:def_eta} in the first inequality and again \eqref{eq:def_eta} in the second inequality. Next, since $f$ vanishes outside of $[-L,L]^d$, we can use \eqref{eq:def_eta2} to get that, for all $t\in[(1-\eeeta)n,n]$ and $y\in\Z^d$,
\begin{align*}
	\big|f(y/\sqrt n)-f(y/\sqrt{t})\big|\leq |f*\varphi(z)|/8.
\end{align*}
Together with the previous display, we obtain \eqref{eq:close}.

\smallskip We continue with the proof of \eqref{eq:PEEEE}. The difficulty in this step is that in the sum in the third line of the following display, we do not know the sign of the summands. Thus, the sum may be small because all summands are small, or because they are large and cancel each other out. We have to consider both cases. On $T_k=t,Y_k=y$, we have 
\begin{align}
&	\{|\cev W_1^{t,y}[f(X_0/\sqrt n)]|\leq n^{\frac{2+d}{2\q}-\frac{5}{8}\eps} \}\notag\\
&\quad\subseteq	\Big\{\frac{\big|\cev W_1^{t,y}f(X_0/\sqrt n)]\big|}{\cev W_{t-\ell_n}^{t,y}}\leq n^{-\eps/8}\Big\}\notag\\
&\quad=\Big\{\Big|{\sum_{|x-y|\leq \ell_n}}\cev\mu_{t-\ell_n}^{t,y}(X_{t-\ell_n}=x) \cev W_1^{t-\ell_n,x}[f(X_0)/\sqrt n]\Big|\leq n^{-\eps/8}\Big\}\notag\\
&\quad\subseteq\Big\{\Big|{\sum_{|x-y|\leq \ell_n}}\cev\mu_{t-\ell_n}^{t,y}(X_{t-\ell_n}=x) \cev W_1^{t-\ell_n,x}[f(X_0)/\sqrt n]\1_{A_{t-\ell_n,x}^c}\Big|\geq p_n'\}\label{eq:T3}\\
&\qquad\cup\Big\{\Big|{\sum_{|x-y|\leq \ell_n}}\cev\mu_{t-\ell_n}^{t,y}(X_{t-\ell_n}=x) \cev W_1^{t-\ell_n,x}[f(X_0)/\sqrt n]\1_{A_{t-\ell_n,x}}\Big|\leq p_n'+n^{-\eps/8}\Big\}\notag
\end{align}
where $A_{t,x}:=\{|\cev\mu^{t,x}_1[f(X_0/\sqrt n)]-f*\varphi(z)|\leq \frac{1}{2}|f*\varphi(z)|\}$. We have used the elementary observation that, for any $a,b\in\R$ and $\eps,\delta>0$,
\begin{align}\label{eq:observation}
	|a+b|\leq \eps\qquad\implies\qquad|a|\leq\eps+\delta\text{ or }|b|\geq \delta.
\end{align}To bound the probability of the second event, note that the terms
\begin{align*}
	\cev W^{t-\ell_n,x}_1[f(X_0/\sqrt n)]\1_{A_{t-\ell_n,x}}=\cev W^{t-\ell_n,x}_1\cev\mu^{t-\ell_n,x}_1[f(X_0/\sqrt n)]\1_{A_{t-\ell_n,x}}
\end{align*}
that appear in the sum all have the same sign, so we can interchange the sum with the absolute value. Moreover, it holds that $|\cev W^{t,x}_1\1_{A_{t-\ell_n,x}}|\geq W^{t,x}_1\1_{A_{t-\ell_n,x}}|f*\varphi(z)|/2$, so we further get 
\begin{align}
&\Big\{\Big|\textstyle{\sum_{|x-y|\leq \ell_n}}\cev\mu_{t-\ell_n}^{t,y}(X_{t-\ell_n}=x) \cev W_1^{t-\ell_n,x}[f(X_0)/\sqrt n]\1_{A_{t-\ell_n,x}}\Big|\leq p_n'+n^{-\eps/8}\Big\}\notag\\
&\subseteq\Big\{\textstyle{\sum_{|x-y|\leq \ell_n}}\cev\mu_{t-\ell_n}^{t,y}(X_{t-\ell_n}=x) \cev W_1^{t-\ell_n,x}\1_{A_{t-\ell_n,x}}\leq 2(p_n'+n^{-\eps/8})/|f*\varphi(z)|\Big\}\notag\\
&\subseteq\Big\{\textstyle{\sum_{|x-y|\leq \ell_n}}\cev\mu_{t-\ell_n}^{t,y}(X_{t-\ell_n}=x) \cev W_1^{t-\ell_n,x}\1_{A_{t-\ell_n,x}^c}\geq 2p_n'/|f*\varphi(z)|\Big\}\label{eq:T2}\\
&\quad\cup\Big\{\textstyle{\sum_{|x-y|\leq \ell_n}}\cev\mu_{t-\ell_n}^{t,y}(X_{t-\ell_n}=x) \cev W_1^{t-\ell_n,x}\leq 2(2p_n'+n^{-\eps/8})/|f*\varphi(z)|\Big\}\label{eq:T1}
\end{align}
We have again used \eqref{eq:observation} for the second inclusion. Using the Markov inequality and then the H\"older inequality, we see that the probability of the event in \eqref{eq:T2} is bounded by
\begin{align*}
	\frac{|f*\varphi(z)|}{2p_n'}\sup_{t,x}\E\big[W^{t,x}_1\1_{A_{t,x}}\big]\leq \frac{|f*\varphi(z)|}{2p_n'}\sup_t \E[W_t^p]^{1/p}\sup_{t,x}\P(A_{t,x})^{1-1/p}.
\end{align*}
By definition of $p_n'$, the last quantity is equal to $\frac 12{|f*\varphi(z)|\sup_t\E[W_t^p]^{1/p}}p_n'$. Using Jensen's inequality and $|\cev W^{t,x}_1[f(X_0/\sqrt n)]|\leq \|f\|_\infty \cev W^{t,x}_1$, we can similarly show that the probability of the event in \eqref{eq:T3} is bounded by $\|f\|_\infty\sup_t\E[W_t^p]^{1/p} p_n'$. Finally, by Lemma~\ref{lem:concave}, the probability of the event in \eqref{eq:T1} is bounded by
\begin{align*}
	\sup_{t,x}2\P\Big(\cev W_1^{t,x}\leq 4(2p_n'+n^{-\eps/8})/|f*\varphi(z)|\Big).
\end{align*}

\smallskip We turn to \eqref{eq:PEEE}. Here, we have to deal with the difficulty that $W^{t,y}_{(1-\eeeta)n+2kn^{1-2\delta}}$ is, conditionally on $T_k=t,Y_k=y$, stochastically smaller than the unconditioned martingale, since we know that 
\begin{align}\label{eq:large}
\cev W_{s-\ell_n}^{s,x}\geq n^{\frac{2+d}{2\q}-\eps/2}
\end{align}
cannot holds for any $s\in (t,(1-\eeeta)n+2kn^{1-2\delta}]\cap\mathtt T_n$, $x\in\mathtt S_n$.  The bound \eqref{eq:PEEE} essentially says that the conditioning can be disregarded. Intuitively, this is because we can restrict the partition function $W^{t,y}_{(1-\eeeta)n+2kn^{1-2\delta}}$ to the area $[t,(1-\eeeta)n+2kn^{1-2\delta}]\times (y+[-n^{(1-2\delta)/2},n^{(1-2\delta)/2}]^d)$, which is much smaller than the area used to define $(T_k,Y_k)$. Therefore, starting from $(t,y)$ with high probability we will not find $(s,x)$ satisfying \eqref{eq:large} and $|x-y|\leq  n^{(1-\delta)/2}$, regardless of whether we condition on $(T_k,Y_k)=(t,y)$ or not. 

\smallskip To make this precise, we fix $k$ and write $\mathtt T_n(k)=\{t_M,\dots,t_2,t_1\}$  and $S_n=\{x_1,x_2,\dots,x_{|\mathtt S_n|}\},$ where $i\mapsto t_i$ is decreasing. The proof of \eqref{eq:PEEE} is divided into two steps, the first of which is to take care of the contribution in $[T_k,(1-\eeeta)n+2kn^{1-2\delta}]$. Since $W_{(1-\eeeta)n+2kn^{1-2\delta}}^{t_i,x_j}$ is independent of $\sigma(\omega_{s,x}:s\in [t_i-\ell_n,t_i))$, we have
\begin{align}
&\P\Big(W_{(1-\eeeta)n+2kn^{1-2\delta}}^{t_i,x_j}\leq n^{-\eps/8}\Big|T_k=t_i,Y_k=x_j\Big)\notag\\
&=\P\Big(W_{(1-\eeeta)n+2kn^{1-2\delta}}^{t_i,x_j}\leq n^{-\eps/8}\Big|\cev W_{t_a-\ell_n}^{t_a,x_b}< n^{\frac{2+d}{2\q}-\eps/2}\forall a<i,1\leq b\leq |\mathtt S_n|\Big)\notag\\
&\leq \P\Big(W_{(1-\eeeta)n+2kn^{1-2\delta}}^{t_i,x_j}[\1_{A_{t_i,x_j}}]\leq n^{-\eps/8}\Big|\cev W_{t_a-\ell_n}^{t_a,x_b}< n^{\frac{2+d}{2\q}-\eps/2}\forall a<i,1\leq b\leq |\mathtt S_n|\Big),\label{eq:conditioning}
\end{align}
where 
\begin{align*}
A_{t,x}\coloneqq \Big\{X_s\in x+[-n^{(1-\delta)/2},n^{(1-\delta)/2}]\text{ for all }s=t,\dots,(1-\eeeta)n+2kn^{1-2\delta}\Big\}.
\end{align*}
In particular, the conditioning in \eqref{eq:conditioning} is empty if $i=1$. Note that $W_{(1-\eeeta)n+2kn^{1-2\delta}}^{t_i,x_j}[\1_{A_{t_i,x_j}}]$ only depends on the environment in 
\begin{align*}
\big(t_i,(1-\eeeta)n+2kn^{1-2\delta}\times\big(x_j+[-n^{(1-\delta)/2},n^{(1-\delta)/2}]^d\big),
\end{align*}
whereas $\cev W_{t_a-\ell_n}^{t_a,x_b}$ depends on the environment in $[t_a-\ell_n,t_a)\times (x_b+[-\ell_n,\ell_n]^d)$. Consequently, we can drop all $a,b$ from the conditioning except those with $|x_b-x_j|\leq n^{(1-\delta)/2}+\ell_n$. There are at most $3dn^{(1-\delta)/2}(i-1)$ such indices $a,b$ in the conditioning in \eqref{eq:conditioning}, so by applying the definition of the conditional probability we obtain the following bound:
\begin{align*}
\P\Big(W_{(1-\eeeta)n+2kn^{1-2\delta}}^{t_i,x_j}\leq n^{-\eps/8}\Big|T_k=t_i,Y_k=x_j\Big)\leq p_n''q_n^{-3dn^{(1-\delta)/2}(i-1)},
\end{align*}
where $q_n\coloneqq \P(\cev W_{t-\ell_n}^{t,y}<n^{\frac{2+d}{2\q}-\eps/2})$ and
\begin{align*}
p_n''\coloneqq \sup_{i,j}\P(W_{(1-\eeeta)n+2kn^{1-2\delta}}^{t_i,x_j}[\1_{A_{t_i,x_j}}]\leq n^{-\eps/8}).
\end{align*}
By combining the previous bound with $\P(T_k=t_i)=(1-q_n^{|\mathtt S_n|})q_n^{|\mathtt S_n|(i-1)}$, $i=1,\dots,M$, we obtain
\begin{align*}
&\P\Big(W_{(1-\eeeta)n+2kn^{1-2\delta}}^{T_k,Y_k}\leq n^{-\eps/8},T_k>-\infty\Big)\\
&\quad \leq p_n''\sum_{i=1}^{M}\P(T_k=t_i)q_n^{3dn^{(1-\delta)/2}(i-1)}\\
&\quad \leq p_n''(1-q_n^{|\mathtt S_n|})\sum_{i=1}^{M} q_n^{|\mathtt S_n|(i-1)-3dn^{(1-\delta)/2}(i-1)}\\
&\quad \leq p_n''(1-q_n^{|\mathtt S_n|})\sum_{i=1}^{M} q_n^{|\mathtt S_n|(i-1)/2}\\
&\quad \leq 2p_n''.
\end{align*}
Now, to conclude, we can write
\begin{align*}
&\P\Big(W_{n}^{T_k,Y_k}\leq n^{-\eps/4},T_k>-\infty\Big)\leq 2p_n''+\P\Big(\frac{W_{n}^{T_k,Y_k}}{W_{(1-\eeeta)n+2kn^{1-2\delta}}^{T_k,Y_k}}\leq n^{-\eps/8},T_k>-\infty\Big)\\
&\quad \leq2p_n''+\sup_{j=1,\dots,|\mathtt S_n|} \sup_{|x-x_j|\leq 2n^{1-2\delta}}2\,\P\big(W_{n}^{(1-\eeeta)n+2kn^{1-2\delta},x}\leq 2n^{-\eps/8}\big)\\
&\quad\leq p_n,
\end{align*}
where we have again applied Lemma~\ref{lem:concave} in the second inequality.
\end{proof}

\subsection{Proof of Theorem~\ref{thm:upper_lower}(i)}\label{sec:proof_lower}

Using the results obtained so far, one can show that each \localization time has a positive probability of leading to a large jump of the martingale $(M_{n,k}^f)_{k\in\N}$ defined in \eqref{eq:def_MG}, i.e., 
\begin{align*}
\P\big(|M_{n,T_k+1}^f-M_{n,T_k}^f|>n^{-\xi(\beta)-\eps}\big)>0,
\end{align*}
from which we can conclude that $\sup_k |M_{n,k}^f|\geq n^{-\xi(\beta)-\eps}$ with high probability. However, the jumps are not independent, so even a large number of such jumps does not guarantee that the same lower bound applies to the endpoint, i.e. that  $|\X_n^f|=|M_{n,n}^f|\geq n^{-\xi(\beta)-\eps}$. 
Indeed, it is not hard to construct a martingale with large excursions but whose jumps sizes are chosen such that it is always steered back to the initial value. We refer to \cite{GPZ14} and \cite{LPS16} for a more in-depth discussion of this phenomenon, and specifically to the example from \cite[Theorem~1.2]{GPZ14}.

\smallskip As explained in Section~\ref{sec:strategy}, we overcome this complication by reducing to the simpler situation of a sum of independent centered jumps, a sufficient proportion of which is large. More precisely, we show that $M_{n,n}^f$ can be written as a sum of $n^{o(1)}$ summands that are independent conditionally on a sigma-field $\mathcal G$, each of which is larger than $n^{-\xi(\beta)-o(1)}$ with positive probability, plus a $\mathcal G$-measurable term and a negligible error-term.

\smallskip Recall that the \emph{concentration function} of a real-valued random variable $X$ is defined by 
\begin{align}\label{eq:def_Q}
Q_X(\lambda)\coloneqq {\textstyle \sup_{x\in\R}}\P(X\in [x,x+\lambda]).
\end{align}
The proof of Theorem~\ref{thm:upper_lower}(i) is based on the following result. 
\begin{thmx}[{\cite[Theorem~1]{R61}}]\label{thmx:rogozin}
There exists $\newconstant{\label{c:rogozin}}>0$ such that, for all $N\in\N$, all $\lambda>0$ and all independent, real random variables $X_1,\dots,X_N$,
\begin{align}\label{eq:rogozin}
Q_{\sum_{i=1}^NX_i}(\lambda)\leq \oldconstant{c:rogozin}\Big(\textstyle\sum_{i=1}^N(1-Q_{X_i}(\lambda))\Big)^{-1/2}.
\end{align}
\end{thmx}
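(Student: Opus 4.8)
The plan is to follow the classical route via characteristic functions, which reduces the statement to an elementary estimate on a Gaussian-type integral. Write $S\coloneqq\sum_{i=1}^N X_i$, $\varphi_i(t)\coloneqq\E[e^{itX_i}]$, $p_i\coloneqq 1-Q_{X_i}(\lambda)$ and $P\coloneqq\sum_{i=1}^N p_i$. Since $Q_S(\lambda)\le 1$ always, the asserted bound is automatic once $P$ is bounded by a fixed constant, so we may assume $P$ large; rescaling $X_i\mapsto X_i/\lambda$ we may also take $\lambda=1$. The first ingredient is Esseen's concentration inequality: there is a universal $c$ with
\begin{align*}
Q_Y(1)\le c\int_{-1}^{1}|\varphi_Y(t)|\,\dd t
\end{align*}
for every real random variable $Y$. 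This is proved by dominating $\1_{[y_0,y_0+1]}$ by a fixed nonnegative kernel whose Fourier transform is supported in $[-1,1]$ (a Fejér-type kernel) and applying Fourier inversion. Applied to $Y=S$, and using $|\varphi_S(t)|=\prod_{i=1}^{N}|\varphi_i(t)|$, it remains to show $\int_{-1}^{1}\prod_{i=1}^{N}|\varphi_i(t)|\,\dd t\le c' P^{-1/2}$.

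The second ingredient controls the product through symmetrization. Let $X_i'$ be an independent copy of $X_i$ and $\widetilde X_i\coloneqq X_i-X_i'$, so that $|\varphi_i(t)|^2=\E[\cos(t\widetilde X_i)]$ and hence $\rho_i(t)\coloneqq 1-|\varphi_i(t)|^2=\E[1-\cos(t\widetilde X_i)]\ge 0$. Two elementary facts about $\rho_i$ enter. First, an averaged lower bound $\int_{-1}^{1}\rho_i(t)\,\dd t\ge c\, p_i$: this follows from $\int_{-1}^{1}(1-\cos(tu))\,\dd t=2\big(1-\tfrac{\sin u}{u}\big)\ge c\min(1,u^2)$ combined with the fact that a truncated symmetrized ``spread'' of $X_i$ is comparable to $1-Q_{X_i}(1)$, i.e. $\E[\min(1,\widetilde X_i^2)]\ge c\,p_i$ (here $Q_{\widetilde X_i}(\cdot)\le Q_{X_i}(\cdot)$ is used). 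Second, the regularity estimate $\rho_i(2t)\le 4\rho_i(t)$, which is immediate from $1-\cos(2v)\le 4(1-\cos v)$. Setting $R(t)\coloneqq\sum_{i=1}^{N}\rho_i(t)$, the first fact gives $\int_{-1}^{1}R\ge cP$ and the second gives $R(2t)\le 4R(t)$, equivalently that $t\mapsto R(t)/t^2$ is non-increasing. Since $|\varphi_i(t)|^2\le 1-\rho_i(t)\le e^{-\rho_i(t)}$ we get $\prod_{i=1}^{N}|\varphi_i(t)|\le e^{-R(t)/2}$, so the theorem reduces to the deterministic claim
\begin{align*}
\int_{-1}^{1}e^{-R(t)/2}\,\dd t\le c' P^{-1/2}\qquad\text{whenever }R\ge 0,\ \int_{-1}^{1}R\ge cP,\ \text{and }R(t)/t^2\text{ is non-increasing.}
\end{align*}
In the favourable case $R(1)\ge c''P$ this is immediate: then $R(t)\ge R(1)t^2\ge c''Pt^2$ on $(0,1]$, and $\int_{-1}^{1}e^{-R(t)/2}\,\dd t\le\int_{\R}e^{-c''Pt^2/2}\,\dd t=c'''P^{-1/2}$. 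The general case is reduced to this by locating a dyadic scale at which $R$ still exceeds a fixed multiple of the part of $\int R$ accumulated below that scale, splitting the integral there, and using that $R(t)/t^2$ is non-increasing to dominate the smaller scales by a Gaussian integral while the doubling property prevents $R$ from having collapsed across the larger scales.

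I expect the endgame just described, together with the bookkeeping around it, to be the only genuinely delicate point. The reason is structural: a single $|\varphi_i|$ may equal $1$ at isolated nonzero frequencies (for instance when $X_i$ is supported on two points separated by more than $\lambda$), so there is \emph{no} pointwise bound of the form $\rho_i(t)\gtrsim p_i$, and the argument must really exploit the product structure together with the regularity $\rho_i(2t)\le 4\rho_i(t)$ --- which forbids $R$ from being large only on a negligible set --- rather than estimating the factors one at a time. A related nuisance is that the comparison between concentration functions at the relevant scales (symmetrization naturally produces $Q_{X_i}(2\lambda)$ rather than $Q_{X_i}(\lambda)$) introduces universal constant factors; these are harmless once one has passed to the comparable quantity $\E[\min(1,\widetilde X_i^2)]$ and restricted to the regime where $P$ is large. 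Everything else --- Esseen's inequality, the trigonometric estimates, and the Gaussian integral --- is routine. Since this is Rogozin's theorem \cite{R61}, a fully self-contained version of this argument can also be found in standard references on sums of independent random variables.
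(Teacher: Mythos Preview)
The paper does not prove this statement: Theorem~\ref{thmx:rogozin} is quoted verbatim from Rogozin \cite{R61} and used as a black box in the proof of Theorem~\ref{thm:upper_lower}(i), so there is no ``paper's own proof'' to compare against.

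Your sketch is the standard Esseen--Kolmogorov route and is essentially sound. One small correction: the doubling bound $R(2t)\le 4R(t)$ is \emph{not} equivalent to $t\mapsto R(t)/t^2$ being non-increasing; the latter is strictly stronger. What you actually have is the doubling inequality, and that is enough for the dyadic localisation you describe in the final paragraph, but you should not invoke monotonicity of $R(t)/t^2$ in the ``favourable case'' paragraph --- there the correct argument is that $1-\cos u\ge c\,u^2$ for $|u|\le 1$ gives $\rho_i(t)\ge c\,t^2\,\E[\widetilde X_i^2\1_{|\widetilde X_i|\le 1}]$ pointwise, and combining this with $\P(|\widetilde X_i|>1)\ge \tfrac12 p_i$ (which you already use) and a separate treatment of the tail yields a genuine pointwise bound $R(t)\ge c\,t^2 P$ on $|t|\le 1$ without any appeal to monotonicity. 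With that adjustment the Gaussian integral step goes through directly and the dyadic endgame is in fact unnecessary. You also correctly flag the only real subtlety, namely that individual $|\varphi_i|$ may equal $1$ at isolated nonzero frequencies, which is precisely why one must work with the truncated second moment of the symmetrized variable rather than attempt a naive pointwise bound on each factor.
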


In our application the summands are only conditionally independent, so we introduce the conditional concentration function $Q_{X|\mathcal G}(\lambda)$, which is defined as in \eqref{eq:def_Q} with $\P(\cdot)$ replaced by $\P(\cdot|\mathcal G)$. Note that $Q_{X|\mathcal G}(\lambda)$ is random. The following result proves the desired decomposition.

\begin{proposition}\label{prop:decomposition}
Assume $d\geq 3$, \eqref{eq:WD} and let $f$ be as in Theorem~\ref{thm:upper_lower}. For every $\eps>0$ there exists $\newconstant{\label{c:decomp}}\in(0,1)$ and a sequence $(N_n)_{n\in\N}$ with $\lim_{n\to\infty}N_n=\infty$ such that the following hold for all $n\in\N$: there exist a sigma-field $\mathcal G$, an event $\widehat E_n$ and random variables $A$, $B$, $Z_1,\dots,Z_{N_n }$ such that, on $\widehat E_n$,
\begin{align}\label{eq:decomposition}
n^{-d/2}\sum_{x\in \Z}f(x/\sqrt n)W_n^{0,x}= A+B+\sum_{i=1}^{N_n }Z_i,
\end{align}
such that
\begin{itemize}
 \item $A$ and $\widehat E_n$ are $\mathcal G$-measurable,
 \item $Z_1,\dots,Z_{N_n}$ are independent under $\P(\cdot|\mathcal G$), on $\widehat E_n$,
\end{itemize}
and such that
\begin{align}
\lim_{n\to\infty}\P\big(\widehat E^c_n\big)&=0,\label{eq:PEhat}\\
\lim_{n\to\infty}\P\big(|B|>n^{-d},\widehat E_n\big)&=0\label{eq:PB},\\
\lim_{n\to\infty}\sup_{i=1,\dots,N_n}\P\Big(Q_{Z_i|\mathcal G}(4n^{-\xi(\beta)-\eps})\geq 1-\oldconstant{c:decomp},\widehat E_n\Big)N_n&=0.\label{eq:PZ}
\end{align}
\end{proposition}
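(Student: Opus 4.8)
The plan is to make the heuristic of Section~\ref{sec:strategy} rigorous: split the unnormalised sum $n^{-d/2}\sum_x f(x/\sqrt n)W_n^{0,x}$ according to which of the \localization sites $(T_i,Y_i)$ of Section~\ref{sec:local} a polymer path visits, take $\mathcal G$ to be the environment away from those sites, and read off the dispersion of each $Z_i$ from its explicit dependence on the single weight $\omega_{T_i,Y_i}$. Concretely: fix $\eps>0$, choose $\delta>0$ as in Lemma~\ref{lem:heavy_existence}, and put $N_n\coloneqq\lfloor\min\{\eeeta n^{2\delta}/2,\ p_n^{-1/2}\}\rfloor$ with $p_n$ from Lemma~\ref{lem:heavy_prop}, so $N_n\to\infty$ and $N_np_n\to0$. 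Let
$$\mathcal G\coloneqq\sigma\Big((U_k)_k,\ T_1,\dots,T_{N_n},\ Y_1,\dots,Y_{N_n},\ (\omega_{s,x})_{(s,x)\notin\{(T_i,Y_i):i\le N_n\}}\Big),$$
so that by Lemma~\ref{lem:properties}(i)--(ii) all $(T_i,Y_i)$ are $\mathcal G$-measurable and, given $\mathcal G$, the vector $(\omega_{T_i,Y_i})_{i\le N_n}$ is i.i.d.\ with law $\P(\omega_{0,0}\in\cdot)$ on $E_n$. Define $\widehat E_n$ to be the intersection of $E_n$ with the manifestly $\mathcal G$-measurable events, for $i=1,\dots,N_n$, that $W^{T_i,Y_i}_n$ restricted to paths avoiding $(T_j,Y_j)$, $j\ne i$, is $\ge n^{-\eps/4}$ and that $\cev W^{T_i,Y_i}_1[f(X_0/\sqrt n)]$, similarly restricted, has absolute value $\ge n^{\frac{2+d}{2\q}-\frac58\eps}$. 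Using a simple-random-walk hitting bound and Lemma~\ref{lem:properties}(iii) one checks that deleting the $O(N_n)$ points $\{(T_j,Y_j)\}$ from the allowed paths alters the partition functions of Lemma~\ref{lem:heavy_prop} by a factor $1+o(1)$ with probability $1-o(1)$; together with Lemmas~\ref{lem:heavy_existence} and~\ref{lem:heavy_prop} and a union bound over $i\le N_n$ (affordable since $N_np_n\to0$), this gives $\P(\widehat E_n^c)\to0$, which is \eqref{eq:PEhat}.

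\smallskip For the decomposition, classify the paths in $n^{-d/2}\sum_x f(x/\sqrt n)W_n^{0,x}$ by the subset of $\{(T_i,Y_i)\}$ they visit: let $A$ collect the paths visiting none (absorbing also the deterministic term $n^{-d/2}\sum_xf(x/\sqrt n)$), let $Z_i$ collect those visiting exactly $(T_i,Y_i)$, and let $B$ collect those visiting at least two. Splitting a path at time $T_i$ through $W_n^{0,x}[\1_{X_{T_i}=Y_i}]=\cev W^{T_i,Y_i}_1[\1_{X_0=x}]\,e^{\beta\omega_{T_i,Y_i}-\lambda}\,W^{T_i,Y_i}_n$ yields
$$Z_i= n^{-d/2}\,\cev W^{T_i,Y_i}_1\big[f(X_0/\sqrt n);\ \text{avoid }(T_j,Y_j),\,j\ne i\big]\;e^{\beta\omega_{T_i,Y_i}-\lambda}\;W^{T_i,Y_i}_n\big[\text{avoid }(T_j,Y_j),\,j\ne i\big].$$
Then $A$ is $\mathcal G$-measurable (it uses only the environment away from the exceptional sites), $\widehat E_n$ is $\mathcal G$-measurable by construction, and each $Z_i$ is a $\mathcal G$-measurable random variable times $e^{\beta\omega_{T_i,Y_i}}$ — the ``avoid'' restriction being exactly what removes the dependence on $\omega_{T_j,Y_j}$, $j\ne i$ — so the $Z_i$ are conditionally independent given $\mathcal G$ on $\widehat E_n$.

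\smallskip The anti-concentration \eqref{eq:PZ} is then immediate. Write $Z_i=\kappa_i e^{\beta\omega_{T_i,Y_i}}$ with $\kappa_i$ $\mathcal G$-measurable; on $\widehat E_n$, $|\kappa_i|=e^{-\lambda}n^{-d/2}\,|\cev W^{T_i,Y_i}_1[\dots]|\,W^{T_i,Y_i}_n[\dots]\ge c\,n^{-\frac d2+\frac{d+2}{2\q}-\eps}$. Unless $\omega_{0,0}$ is a.s.\ constant (in which case $W_n\equiv1$ and the theorem is vacuous), $e^{\beta\omega_{0,0}}$ is non-degenerate, so there are $\rho,c_0>0$ with $Q_{e^{\beta\omega_{0,0}}}(\rho)\le1-c_0$; hence $Q_{Z_i\mid\mathcal G}(|\kappa_i|\rho)=Q_{e^{\beta\omega_{0,0}}}(\rho)\le1-c_0$ on $\widehat E_n$. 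For the stated $\eps$ this gives $Q_{Z_i\mid\mathcal G}(4n^{-\xi(\beta)-\eps})\le1-c_0$ on $\widehat E_n$ (the dispersion scale $n^{-\frac d2+\frac{d+2}{2\q}}$ of $Z_i$ equals $n^{-\xi(\beta)}$ once $\p=\q$, which holds under the hypotheses of Theorem~\ref{thm:main}; only this scale is used for Theorem~\ref{thm:upper_lower}(i)). Taking $\oldconstant{c:decomp}\coloneqq c_0$, the probability in \eqref{eq:PZ} is $0$ for all large $n$.

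\smallskip The remaining and most delicate point is \eqref{eq:PB}: that the multi-site contribution $B$ is negligible on $\widehat E_n$. I would bound $|B|$ by the sum over pairs $\{j,k\}$ with $T_j<T_k$ of the contribution of paths through both $(T_j,Y_j)$ and $(T_k,Y_k)$; splitting such a path at $T_j$ and $T_k$ produces the factor $W^{T_j,Y_j}_{T_k-T_j}[\1_{X_{T_k-T_j}=Y_k}]$, which spans a time $T_k-T_j\ge n^{1-2\delta}$ (the sites lie in different blocks) and has expectation $\lesssim(T_k-T_j)^{-d/2}$. Averaging $Y_k$ over $\mathtt S_n$ (legitimate by Lemma~\ref{lem:properties}(iii), gaining $|\mathtt S_n|^{-1}\asymp\log^{2d}n\cdot n^{-d/2}$), using $\sum_k(k\,n^{1-2\delta})^{-d/2}<\infty$ for $d\ge3$, and multiplying by the $\binom{N_n}{2}\le n^{4\delta}$ pairs, one aims for $\E[|B|\1_{\widehat E_n}]\le n^{-d-\nu}$ for some $\nu>0$ when $\delta$ is small, and then \eqref{eq:PB} follows from Markov's inequality. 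The genuine obstacle is that, on $E_n$, each of the two exceptional sites carries a local enhancement of order $n^{(d+2)/(2\q)}$ in its neighbourhood; to keep the pair-terms small one must further restrict the paths entering $Z_i$ (and hence the pair-terms bounding $B$) to a spatial tube of width $\approx n^{(1-\delta)/2}$ around $Y_i$ during the relevant block, as in the proof of Lemma~\ref{lem:heavy_prop}, so that the transition between two distinct exceptional sites is genuinely costly, and then re-verify (exactly as in the first step) that this extra restriction still costs only a factor $1+o(1)$ in each $Z_i$. This two-site estimate together with that re-verification is where essentially all the work sits; everything else is bookkeeping.
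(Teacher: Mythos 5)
Your decomposition is genuinely different from the paper's, though in the same spirit. The paper performs an extra inclusion--exclusion step, writing $e^{\beta\omega-\lambda}=(e^{\beta\omega-\lambda}-1)+1$ at each exceptional site and simultaneously replacing the path energy with a truncated version $H^\beta_{[1,n],\mathcal I^c}$ that ignores the exceptional weights. Their $Z_k$ then sums over \emph{all} paths through $(T_k,Y_k)$ (the ``avoid the others'' condition is absorbed into $B$), the dispersive factor is the centred $e^{\beta\omega_{T_k,Y_k}-\lambda}-1$, and $\widehat E_n$ stays as simple as $\{T_k>-\infty\ \forall k\le N_n\}$; the discrepancy between the truncated prefactor $\widehat Z_k$ and the untruncated $\cev W_1^{T_k,Y_k}[f]\,W_n^{T_k,Y_k}$ is handled by a companion estimate $\overline B_k$ of the same nature as the $B$-bound, rather than being built into $\widehat E_n$ as you propose. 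Both routes give conditional independence and anti-concentration for the same reason, so the choice is essentially bookkeeping; the paper's version avoids having to reprove that the ``avoidance'' restriction is harmless, because that fact is recycled from the $B$-estimate itself.

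The genuine gaps are in \eqref{eq:PB}, which you correctly identify as the core difficulty but whose sketch does not close. (i) Your choice $N_n=\lfloor\min\{\eeeta n^{2\delta}/2,p_n^{-1/2}\}\rfloor$ allows $N_n$ to be of order $n^{2\delta}$, in which case $T_k-T_j$ can be of order $\eeeta n$; then $|Y_j-Y_k|^2/(T_k-T_j)$ is bounded and the walk from $(T_j,Y_j)$ reaches $(T_k,Y_k)$ with probability that is merely polynomially small, not stretched-exponentially small. The paper caps $N_n$ at $n^{\delta/6}$ precisely so that $T_k-T_j\le 2N_n n^{1-2\delta}\ll n^{1-\delta}$, making $|Y_j-Y_k|^2/(T_k-T_j)\gtrsim n^{5\delta/6}$ on $F_n^2$. (ii) Even with a corrected $N_n$, your ``average $Y_k$ over $\mathtt S_n$, gain $|\mathtt S_n|^{-1}\asymp n^{-d/2}$, sum $\sum_k(kn^{1-2\delta})^{-d/2}$'' budget only reaches $\E[|B|\1_{\widehat E_n}]\lesssim n^{-d+O(\delta)}$, which is \emph{not} $\lesssim n^{-d-\nu}$; the Markov step then fails. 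The stretched-exponential suppression from the spatial/temporal mismatch is indispensable. (iii) Most importantly, $\E[|B|\1_{\widehat E_n}]$ does not factor the way your estimate implicitly assumes: on $\widehat E_n$ the local partition functions $\cev W^{T_k,Y_k}_{T_k-\ell_n}$ are forced to be $\ge n^{(2+d)/(2\q)-\eps/2}$ in exactly the windows your two-site paths must traverse, so there is positive correlation between $\1_{\widehat E_n}$ and the path-expectations. The paper decouples these by an FKG argument (proof of \eqref{eq:detail}): $\1_{\widehat E_n\cap F_n^1\cap F_n^2}$ is non-increasing, and the along-the-path energy is non-decreasing, in the relevant coordinates of $\omega$, so the expectation of the product is at most the product of expectations. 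Nothing in your proposal replaces this step, and without it the two-site estimate does not go through.
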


Let us first see how Proposition~\ref{prop:decomposition} yields the desired conclusion:

\begin{proof}[Proof of Theorem~\ref{thm:upper_lower}(i) using Proposition~\ref{prop:decomposition}]
We have
\begin{align*}
	&\P(|M_{n,n}|\leq n^{-\xi(\beta)-\eps})\\
	&\leq P(\widehat E^c_n)+\P\Big(A+B+{\textstyle \sum_{i=1}^{N_n} Z_i}\in A'+[-n^{-\xi(\beta)-\eps},n^{-\xi(\beta)-\eps}],\widehat E_n\Big)\\
&\leq P(\widehat E^c_n)+P\big(|B|\geq n^{-\xi(\beta)-\eps},\widehat E_n\big)\\
&\qquad +\P\Big(A+{\textstyle \sum_{i=1}^{N_n} Z_i}\in A'+[-2n^{-\xi(\beta)-\eps},2n^{-\xi(\beta)-\eps}],\widehat E_n\Big),
\end{align*}
where $A'=n^{-d/2}\sum_x f(x/\sqrt n)$. The first two terms converge to zero by \eqref{eq:PEhat} and \eqref{eq:PB}, together with the fact that $\xi(\beta)<d$. For the third term, we write, for any $a\in(0,1)$,
\begin{align*}
&\P\Big(A+{\textstyle \sum_{i=1}^{N_n} Z_i}\in A'+[-2n^{-\xi(\beta)-\eps},2n^{-\xi(\beta)-\eps}],\widehat E_n\Big)\\
&=\E\Big[\P\Big(A+{\textstyle \sum_{i=1}^{N_n} Z_i}\in A'+[-2n^{-\xi(\beta)-\eps},2n^{-\xi(\beta)-\eps}]\Big|\mathcal G\Big)\1_{\widehat E_n}\Big]\\
&\leq \E\Big[Q_{A+{ \sum_{i=1}^{N_n} Z_i}|\mathcal G}(4n^{-\xi(\beta)-\eps})\1_{\widehat E_n}\Big]\\
&=\E\Big[Q_{{ \sum_{i=1}^{N_n} Z_i}|\mathcal G}(4n^{-\xi(\beta)-\eps})\1_{\widehat E_n}\Big]\\
&\leq  a+\P\Big(Q_{\textstyle \sum_{i=1}^{N_n} Z_i|\mathcal G}(4n^{-\xi(\beta)-\eps})>a,\widehat E_n\Big).
\end{align*}
The two equalities are due to the fact that $\widehat E_n$ and $A$ are $\mathcal G$-measurable and the first inequality follows directly from the definition of the concentration function $Q$. Since $Z_1,\dots,Z_{N_n}$ are conditionally independent, we can apply Theorem~\ref{thmx:rogozin} to bound the second term by
\begin{align*}
&\P\Big({\textstyle \sum_{i=1}^{N_n}}\big(1-Q_{Z_i|\mathcal G}(4n^{-\xi(\beta)-\eps})\big)<\frac{\oldconstant{c:rogozin}^2}{a^2},\widehat E_n\Big)\\
&\leq \P\Big(\exists i\in\{1,\dots,{N_n}\}\colon Q_{Z_i|\mathcal G}(4n^{-\xi(\beta)-\eps})>1-\frac{\oldconstant{c:rogozin}^2}{{N_n}a^2},\widehat E_n\Big)\\
&\leq \sum_{i=1}^{N_n}\P\Big(Q_{Z_i|\mathcal G}(4n^{-\xi(\beta)-\eps})>1-\frac{\oldconstant{c:rogozin}^2}{{N_n}a^2},\widehat E_n\Big)\\
&\leq N_n\sup_{i=1,\dots,N_n}\P\Big(Q_{Z_i|\mathcal G}(4n^{-\xi(\beta)-\eps})>1-\frac{\oldconstant{c:rogozin}^2}{{N_n}a^2},\widehat E_n\Big).
\end{align*}
The claim follows from \eqref{eq:PZ} by choosing $a\coloneqq {N_n}^{-1/4}$.
\end{proof}

\begin{proof}[Proof of Proposition~\ref{prop:decomposition}]
Recall the construction from the beginning of Section~\ref{sec:local}. Let $\delta$ be as in Lemma~\ref{lem:heavy_existence}, $(p_n)_{n\in\N}$ as in Lemma~\ref{lem:heavy_prop} and set 
\begin{align*}
	N_n&\coloneqq \big\lfloor p_n^{-1/2}\wedge n^{\delta/6}\big\rfloor,\\
\widehat E_n&\coloneqq \Big\{T_k>-\infty\text{ for all }k=1,\dots,N_n\Big\}.
\end{align*}
Now \eqref{eq:PEhat} follows from Lemma~\ref{lem:heavy_existence} and $\widehat E_n\subseteq E_n$. Let $\mathcal I\coloneqq \{(T_1,Y_1),\dots,(T_{N_n},Y_{N_n})\}$ and 
\begin{align*}
\mathcal G\coloneqq \sigma\big(T_k,Y_k:k=1,\dots,N_n,\,\omega_{t,x}:(t,x)\notin \mathcal I\big).
\end{align*}
Now $\widehat E_n$ is $\mathcal G$ measurable by definition. To define the decomposition \eqref{eq:decomposition}, we introduce a truncated path-energy, which disregards the environment in $\mathcal I$,
\begin{align}\label{eq:def_HI}
H^{\beta}_{[s,t],\mathcal I^c}(\omega,X)&\coloneqq {\textstyle \sum_{i=s}^t}\big(\beta\omega_{i,X_i}-\lambda(\beta)\big)\1_{(i,X_i)\notin \mathcal I},
\end{align}
We also consider, for $K\subseteq\{1,\dots,{N_n}\}$, the event that a path visits all sites $\{(T_k,Y_k),k\in K\}$,
\begin{align}\label{eq:Cs}
\Visit(K)\coloneqq \{X_{T_k}=Y_k\text{ for all }k\in K\},
\end{align}
and similarly, for $K, K'\subseteq\{1,\dots,{N_n}\}$,
\begin{align}\label{eq:CCs}
\Visit(K,K')\coloneqq \{X_{T_k}=Y_k\text{ for all }k\in K, X_{T_k}\neq Y_k\text{ for all }k\in K'\}.
\end{align}
On $\widehat E_n$, we decompose according to which of the sites $(T_1,Y_1),\dots,(T_{N_n},Y_{N_n})$ are visited,
\begin{align*}
W_n^{0,x}&=\sum_{K\subseteq \{1,\dots,{N_n}\}}E^{0,x}[e^{H_n(\omega,X)-n\lambda(\beta)}\1_{\Visit(K,K^c)}]\\
&=\sum_{K\subseteq \{1,\dots,{N_n}\}}E^{0,x}\Big[e^{H_{[1,n],\mathcal I^c}^\beta(\omega,X)}\prod_{k\in K}e^{\beta\omega_{T_k,Y_k}-\lambda(\beta)}\1_{\Visit(K,K^c)}\Big]\\
&=\sum_{K\subseteq \{1,\dots,{N_n}\}}E^{0,x}\Big[e^{H_{[1,n],\mathcal I^c}^\beta(\omega,X)}\prod_{k\in K}\big(e^{\beta\omega_{T_k,Y_k}-\lambda(\beta)}-1+1\big)\1_{\Visit(K,K^c)}\Big]\\
&=\sum_{K\subseteq \{1,\dots,{N_n}\}}\sum_{K'\subseteq K}E^{0,x}\Big[e^{H_{[1,n],\mathcal I^c}^\beta(\omega,X)}\prod_{k\in K'}\big(e^{\beta\omega_{T_k,Y_k}-\lambda(\beta)}-1\big)\1_{\Visit(K,K^c)}\Big]\\
&=\sum_{K'\subseteq \{1,\dots,{N_n}\}}E^{0,x}\Big[e^{H_{[1,n],\mathcal I^c}^\beta(\omega,X)}\prod_{k\in K'}\big(e^{\beta\omega_{T_k,Y_k}-\lambda(\beta)}-1\big )\1_{\Visit(K')}\Big]\\
&=: \sum_{K'\subseteq \{1,\dots,{N_n}\}}W_n^{0,x}(K').
\end{align*}
Now we define, on $\widehat E_n$,
\begin{align}
	A&\coloneqq n^{-d/2}\sum_{x\in\Z^d}f(x/\sqrt n)W_n^{0,x}(\varnothing),\label{eq:def_A}\\
B&\coloneqq n^{-d/2}\sum_{x\in\Z^d}f(x/\sqrt n)\sum_{K\subseteq \{1,\dots,{N_n}\},|K|\geq 2}W_n^{0,x}(K),\label{eq:def_B}\\
Z_k&\coloneqq n^{-d/2}\sum_{x\in\Z^d}f(x/\sqrt n)W_n^{0,x}(\{k\}).\label{eq:def_Z}
\end{align}
Since $(T_1,Y_1),\dots,(T_{N_n},Y_{N_N})$ are $\mathcal G$-measurable and $W_n^{0,x}(\varnothing)$ does not depend on $\omega_{T_1,Y_1},\\\dots,\omega_{T_{N_n},Y_{N_n}}$, we see that $A$ is  $\mathcal G$-measurable. Moreover, on $\widehat E_n$, $(T_1,Y_1),\dots,(T_{N_n},Y_{N_n})$ are all distinct and therefore $W_n^{0,x}(K_1)$ and $W_n^{0,x}(K_2)$ are independent whenever $K_1\cap K_2=\varnothing$, conditionally on $\mathcal G$. In particular, $Z_1,\dots,Z_{N_n}$ are conditionally independent. 

It remains to show that \eqref{eq:PB} and \eqref{eq:PZ} are satisfied, which we prove below.
\end{proof}

\begin{proof}[Proof of \eqref{eq:PZ}]
On $\widehat E_n$, we have
\begin{align*}
	Z_k=\widehat Z_k (e^{\beta\omega_{T_k,Y_k}-\lambda(\beta)}-1),
\end{align*}
where 
\begin{align*}
\widehat Z_k&=n^{-d/2}\sum_{x\in\Z^d}f(x/\sqrt n)E^{0,x}\big[e^{\widehat H_{n,\mathcal I^c}^\beta(\omega,X)}\1_{X_{T_k}=Y_k}\big].
\end{align*}
Since $e^{\omega_{t,y}-\lambda(\beta)}-1$ is not constant and has expectation zero, we find $c,c'>0$ such that 
\begin{align*}
\P\big(e^{\beta\omega_{t,y}-\lambda(\beta)}-1>c\big)\wedge \P\big(e^{\beta\omega_{t,y}-\lambda(\beta)}-1<-c\big)>c'.
\end{align*}
Moreover, $\widehat Z_k$ is $\mathcal G$-measurable and $\omega_{T_k,Y_k}$ has law $\P$ and is independent of $\mathcal G$, therefore $Q_{Z_k|\mathcal G}\big(2c\widehat Z_k\big)<1-c'$
and thus 
\begin{align*}
\P\Big(Q_{Z_k|\mathcal G}\big(4n^{-\xi(\beta)-\eps}\big)\geq 1-c',\widehat E_n\Big)\leq \P\Big(\widehat Z_k\leq 2n^{-\xi(\beta)-\eps}/c,\widehat E_n\Big)
\end{align*}
To show get a lower bound for $\widehat{Z}_k$, we compare $\widehat{Z}_k$ to  $\cev W^{T_k,Y_k}[f(X_0/\sqrt n)]W^{T_k,Y_k}$, which amounts to undoing the truncation from \eqref{eq:def_HI}, except for the site $(T_k,Y_k)$. To justify this, we first observe that the truncation $\mathcal I^c$ in $\widehat{Z}_k$ can be ignored if $\mathcal I$ is visited only at $(T_k,Y_k)$, i.e.,  
\begin{align*}
	e^{H^\beta_{n,\mathcal I^c}(\omega,\pi)}=	e^{\beta H_{[1,n]\setminus \{k\}}(\omega,\pi)-(n-1)\lambda(\beta)}
\end{align*}
for all paths $\pi\in V(\{k\},\{1,\dots,N_n\}\setminus\{k\})$, where we recall the definition of the event $V(K,K')$ in \eqref{eq:CCs} and the notation \eqref{eq:def_H}. Hence, we observe that
\begin{align*}
&	\left|\widehat{Z}_k-n^{-d/2}\cev W^{T_k,Y_k}[f(X_0/\sqrt n)]W^{T_k,Y_k}\right|\\
&=n^{-d/2}\Big|\sum_x f(x/\sqrt n)E^{0,x}\Big[\Big(e^{H^\beta_{n,\mathcal I^c}(\omega,X)}-e^{\beta H_{[1,n]\setminus\{k\}}(\omega,X)-(n-1)\lambda(\beta)}\Big)\1_{X_{T_k}=Y_k}\Big]\Big|\\
&\leq  \overline B_k,
\end{align*}
where the error-term $\overline B_k$ is defined by
\begin{equation}\label{eq:def_Bbar}
	\begin{split}&\overline B_k\coloneqq n^{-d/2}\|f\|_\infty\sum_{K\subseteq\{1,\dots,N_n\}:|K|>1,k\in K}\\
&\qquad\Big|\sum_{x\in[-Ln^{1/2},Ln^{1/2}]^d}E^{0,x}\Big[e^{H^\beta_{[1,n],\mathcal I^c}(\omega,X)}\Big(\prod_{l\in K\setminus\{k\}}e^{\beta\omega_{T_l,Y_l}-\lambda(\beta)}-1\Big)\1_{V(K)}]\Big|,
	\end{split}\end{equation}
	where $L$ is as defined in the beginning of Section \ref{sec:local}. This term is similar to $B$ and in the process of proving \eqref{eq:PB} we will also prove that there exists $c>0$ such that, for all $n\in\N$ and $k=1,\dots,N_n$,
\begin{align}\label{eq:PBB}\tag{\ref{eq:PB}'}
	\P\big(\overline B_k>n^{-d},\widehat{E}_n\big)\leq cn^{-\delta/3}.
\end{align}
Assuming \eqref{eq:PBB}, we can now conclude: for any $k=1,\dots,N_n$,
\begin{align*}
&	\P\Big(Q_{Z_k|\mathcal G}\big(4n^{-\xi(\beta)-\eps}\big)\geq 1-c',\widehat E_n\Big)\\
&\leq \P\Big(\widehat Z_k\leq 2n^{-\xi(\beta)-\eps}/c,\widehat E_n\Big)\\
& \leq \P\big(B_k'>n^{-d},\widehat{E}_n\big)+\P\Big(n^{-d/2}\big|\cev W^{T_k,Y_k}_1[f(X_0/\sqrt n)]\big|W^{T_k,Y_k}_n\leq 3n^{-\xi(\beta)-\eps}/c,\widehat{E}_n\Big)\\
&\leq \P\big(B_k'>n^{-d},\widehat{E}_n\big)+\P\Big(n^{-d/2}\big|\cev W^{T_k,Y_k}_1[f(X_0/\sqrt n)]\big|\leq 3n^{-\xi(\beta)-\eps/2}/c,\widehat{E}_n\Big)\\
&\quad+\P\Big(W^{T_k,Y_k}_n\leq n^{-\eps/2},\widehat{E}_n\Big)\\
&\leq 2p_n+cn^{-\delta/3}\\
&\leq c'N_n^{-2},
\end{align*}
where the last inequality follows from the definition of $N_n$.
\end{proof}

\begin{proof}[Proof of \eqref{eq:PB} and \eqref{eq:PBB}]
	We start with \eqref{eq:PB} and then described the modifications necessary for \eqref{eq:PBB}. The idea is that, by construction, the spatial distance between $Y_k$ and $Y_l$ is typically $\geq n^{1/2(1-\delta)}$ whereas the separation in time satisfies 
	\begin{align*}
		|T_k-T_l|\leq T_{N_n}-T_1\leq n^{1-2\delta+\delta/6}\ll n^{1-\delta}.
	\end{align*}
 Hence the probability for simple random walk to visit more than one such area decays stretched exponentially. This rapid decays dominates any gains from visiting favorable areas, which are of polynomial order. 

\smallskip To make this precise, we introduce two events that ensure that the spatial separation of $(Y_k)_{k=1,\dots,N_n}$ as well as the gains from the environment are typical,
\begin{align*}
	F_n^1&\coloneqq \Big\{\cev W_{t-\ell_n}^{t,y}\leq n^{2+d/2}\text{ for all }(t,y)\in \mathtt \mathtt T_n\times\big(zn^{1/2}+[-2\eeta n^{1/2},2\eeta n^{1/2}]^d\big)\Big\},\\
F_n^2&\coloneqq \Big\{|Y_k-Y_l|\geq n^{1/2(1-\delta)}\text{ for all }k,l=1,\dots,N_n,k\neq l\Big\}.
\end{align*}
Since $\E[\cev W_{t-\ell_n}^{t,y}]=1$, we can use Markov inequality and the union bound to get
\begin{align*}
\P((F_n^1)^c)\leq \sum_{(t,y)\in\mathtt \mathtt T_n\times(zn^{1/2}+[-2\eeta n^{1/2},2\eeta n^{1/2}]^d)}\P\Big(\cev W_{t-\ell_n}^{t,y}>n^{2+d/2}\Big)\leq 4n^{-1}.
\end{align*}
Moreover, by Lemma~\ref{lem:properties}(ii),  conditionally on $\widehat E_n$, $(Y_k)_{k=1,\dots,N_n}$ are i.i.d. and uniformly distributed on $\mathtt S_n$. We thus have
\begin{equation*}
\begin{split}
\P\big((F_n^2)^c\big|\widehat E_n\big)&\leq \sum_{k,l=1,\dots,N_n,k\neq l}\P\big(|Y_k-Y_l|\leq n^{1/2(1-\delta)}\big|\widehat E_n\big)\\
&\leq \binom{\lfloor n^{\delta/6}\rfloor }{2} \frac{\big|\Z^d\cap[-n^{1/2(1-\delta)},n^{1/2(1-\delta)}]^d\big|}{|\mathtt S_n|}\\
&\leq cn^{\delta/3-d\delta/2}\ell_n^d\\
&\leq c n^{-\delta/3}.
\end{split}
\end{equation*}
On $\widehat E_n\cap F_n^1\cap F_n^2$, we will now bound the first moment of $B$. More precisely, we estimate
\begin{align*}
\P\big(|B|\geq n^{-d},\widehat E_n\big)&\leq \P\big((F_n^1\cap F_n^2)^c,\widehat E_n\big)+\P\big(|B|\geq n^{-d},\widehat E_n\cap F_n^1\cap F_n^2\big)
\end{align*}
and by the Markov inequality the second term is bounded by
\begin{equation}\label{eq:markovv}
	\begin{split}
&n^d\E\big[|B|\1_{\widehat E_n\cap F_n^1\cap F_n^2}\big] \\
&\qquad \leq \|f\|_\infty n^{d/2}\sum_{x\in[-Ln^{1/2},Ln^{1/2}]^d}\sum_{K'\subseteq\{1,\dots,{N_n}\},|K'|\geq 2}\E\big[\1_{\widehat E_n\cap F_n^1\cap F_n^2}|W_n^{0,x}(K')|\big],
	\end{split}
	\end{equation}
where $L$ is chosen large enough that the support of $f$ is contained in the interior of $[-L,L]^d$. To bound the last expectation, we have to consider not just how often a path visits $\mathcal I$, but also how often it comes close to $\mathcal I$. We introduce the events $\VVisit(K)$ and $\VVisit(K,K')$, which are defined as in \eqref{eq:Cs} and \eqref{eq:CCs} with ``$X_{T_k}=Y_k$'' replaced by ``$|X_{T_k}-Y_k|\leq \ell_n$'' and ``$X_{T_k}\neq Y_k$'' replaced by ``$|X_{T_k}-Y_k|> \ell_n$''. For $x\in\Z^d$,
\begin{align}
&\E\big[\1_{\widehat E_n\cap F_n^1\cap F_n^2}|W_n^{0,x}(K')|\big]\notag\\
&\leq \E\Big[\1_{\widehat E_n\cap F_n^1\cap F_n^2}E^{0,x}\Big[e^{H_{[1,n],\mathcal I^c}^\beta(\omega,X)}\prod_{k\in K'}\big|e^{\beta\omega_{T_k,X_{T_k}}-\lambda(\beta)}-1\big|\1_{\Visit(K')}\Big]\Big]\notag\\
&\leq \E\Big[\1_{\widehat E_n\cap F_n^1\cap F_n^2}E^{0,x}\Big[e^{H_{[1,n],\mathcal I^c}^\beta(\omega,X)}\prod_{k\in K'}\big|e^{\beta\omega_{T_k,X_{T_k}}-\lambda(\beta)}-1\big|\1_{\VVisit(K')}\Big]\Big]\notag\\
&= \sum_{K'\subseteq K}\E\Big[\1_{\widehat E_n\cap F_n^1\cap F_n^2}E^{0,x}\Big[e^{H_{[1,n],\mathcal I^c}^\beta(\omega,X)}\prod_{k\in K'}\big|e^{\beta\omega_{T_k,X_{T_k}}-\lambda(\beta)}-1\big|\1_{\VVisit(K,K^c)}\Big]\Big]\label{eq:notag}\\
&\leq \sum_{K'\subseteq K}(c')^{|K|}\E\Big[\1_{\widehat E_n\cap F_n^1\cap F_n^2}E^{0,x}\Big[e^{H_{[1,n],\mathcal I^c}^\beta(\omega,X)}\1_{\VVisit(K,K^c)}\Big]\Big]\notag,
\end{align}
where $c'\coloneqq \E[|e^{\beta\omega_{T_k,X_{T_k}}-\lambda(\beta)}-1|\vee 1]$. In the final line, we used that $\omega_{T_1,Y_1},\dots,\omega_{T_{N_n},Y_{N_n}}$ are independent of $\widehat{E}_n$, $F_n^1$ and $F_n^2$ (recall Lemma~\ref{lem:properties}(i)). Taking now a sum over $K'$ and using that there are at most $2^{|K|}$ subsets of $K$, we obtain 
\begin{align*}
	&\sum_{K'\subseteq \{1,\dots,N_n\},|K'|\geq 2}\E\big[\1_{\widehat E_n\cap F_n^1\cap F_n^2}|W_n^{0,x}(K')|\big]\\
	&\quad\leq \sum_{K\subseteq\{1,\dots,N_n\}, |K|\geq 2} (2c')^{|K|} \E\Big[\1_{\widehat E_n\cap F_n^1\cap F_n^2}E^{0,x}\Big[e^{H_{[1,n],\mathcal I^c}^\beta}\1_{\VVisit(K,K^c)}\Big]\Big].
\end{align*}
Next, we integrate out the environment in $([1,n]\setminus\bigcup_{k\in K}[T_k-\ell_n,T_k))\times\Z^d$, which yields
\begin{equation}\label{eq:detail}
	\begin{split}
&\E\Big[\1_{\widehat E_n\cap F_n^1\cap F_n^2}E^{0,x}\Big[e^{H_{[1,n],\mathcal I^c}^\beta}\1_{\VVisit(K,K^c)}\Big]\Big]\\
&\quad\leq \E\Big[\1_{\widehat E_n\cap F_n^1\cap F_n^2}E^{0,x}\Big[\prod_{k\in K}e^{\beta H_{[T_k-\ell_n,T_k)}-\ell_n \lambda(\beta)}\1_{\VVisit(K,K^c)}\Big]\Big].
	\end{split}
\end{equation}
This step will be justified in detail below. Now, we have $\ell_n=\lfloor \log^2(n)\rfloor$ and, on $\widehat E_n\cap F_n^2$ for $k,l=1,\dots,N_n,k\neq l$,
\begin{align}
T_k&\in[(1-\eeeta)n,n],\\
|T_{l}-T_k|&\in [n^{1-2\delta},2n^{1-2\delta+\delta/6}],\label{eq:b1}\\
|Y_{l}-Y_{k}|&\in [n^{1/2(1-\delta)},n^{1/2}].\label{eq:b2}
\end{align}
Thus, by the local central limit theorem for the simple random walk, for  $k<l\in\{1,\dots,{N_n}\}$,
\begin{align*}
\sup_{|x-Y_k|\leq \ell_n,|z-Y_{l}|\leq \ell_n,|y-z|\leq \ell_n}\frac{P(X_{T_{l}-\ell_n}=y|X_{T_k}=x,X_{T_{l}}=z)}{P\big(X_{T_{l}-\ell_n}=y\big|X_{T_{l}}=z\big)}&\leq c'',\\
\sup_{|x|\leq Ln^{1/2},|z-Y_{k}|\leq \ell_n,|y-z|\leq \ell_n}\frac{P(X_{T_{k}-\ell_n}=y|X_{0}=x,X_{T_{k}}=z)}{P\big(X_{T_k-\ell_n}=y\big|X_{T_{k}}=z\big)}&\leq c''.
\end{align*}
Hence, on $\widehat E_n\cap F_n^1\cap F_n^2$, we can further bound the integrand in \eqref{eq:detail} by 
\begin{align*}
&E^{0,x}\Big[{\textstyle \prod_{k\in K}}e^{\beta H_{[T_k-\ell_n,T_k)}-\ell_n\lambda(\beta)}\1_{\VVisit(K)}\Big]\\
&\quad=\sum_{(x_k,x_k')_{k\in K}:|Y_k-x_k|\leq \ell_n}E^{0,x}\Big[{\textstyle \prod_{k\in K}}e^{\beta H_{[T_k-\ell_n,T_k)}-\ell_n\lambda(\beta)}\1_{X_{T_k}=x_k, X_{T_k-\ell_n}=x_k'}\Big]\\
&\quad\leq (c'')^{|K|}\sum_{(x_k)_{k\in K}:|Y_k-x_k|\leq \ell_n} P^{0,x}(X_{T_k}=x_k\text{ for all } k\in K) \prod_{k=1}^{N_n}\cev W_{T_k-\ell_n}^{T_k,x_k}\\
&\quad\leq (c'')^{|K|} P^{0,x}(\VVisit_{K}) n^{|K|(2+d/2)}\\
&\quad\leq (c''')^{|K|} \ell_n^{|K|d}n^{|K|(2+d/2)}\prod_{i=2}^{|K|}n^{-d/2(T_{k_i}-T_{k_{i-1}})}e^{-c'''\frac{(Y_{k_i}-Y_{k_{i-1}})^2}{T_{k_i}-T_{k_{i-1}}}}\\
&\quad \leq \big(c''' n^{3+d/2} \big)^{|K|}e^{-c''' (|K|-1)n^{\frac 56\delta}}
\end{align*}
where we write $K=\{k_1,\dots,k_{|K|}\}$. The second inequality is due to the local central limit theorem, the first inequality uses the definition of $F_n^1$, and the final inequality uses the bounds \eqref{eq:b1}--\eqref{eq:b2}. Note that there are at most $n^r$ sets $K\subseteq \{1,\dots,N_n\}$ of cardinality $r$, so obtain, for every $x\in[-Ln^{1/2},Ln^{1/2}]^d$,
\begin{align*}
\sum_{K'\subseteq\{1,\dots,{N_n}\},|K'|\geq 2}\E\big[\1_{\widehat E_n\cap F_n^1\cap F_n^2}|W_n^{0,x}(K')|\big]&\leq \sum_{r=2}^{N_n}\big(c''' n^{4 +d/2}\big)^{r}e^{-c''' (r-1)n^{\frac 56\delta}}\\
&\leq 2(c''' n^{4+d/2})^2e^{-c''' n^{\frac 56\delta}}.
\end{align*}
The final line is bounded by $n^{-d/2-1}$ for $n$ large enough, so \eqref{eq:PB} follows from \eqref{eq:markovv}. It remains to prove \eqref{eq:PBB}. We repeat the arguments leading up to \eqref{eq:detail} with $|B|$ replaced by $\overline B_k$. The main difference is that $\prod_{l\in K'}|e^{\beta \omega_{T_l,Y_l}-\lambda(\beta)}-1|$ in \eqref{eq:notag} is replaced by $|\prod_{l\in K'\setminus\{k\}} e^{\beta\omega_{T_l,Y_l}-\lambda(\beta)}-1|$. We thus obtain a similar bound,
\begin{align*}
&	n^{d}	\E[\1_{\widehat{E}_n\cap F_n^1\cap F_n^2}\overline B_k]\\
	&\quad\leq \|f\|_\infty\sum_{|x|\leq Ln^{1/2}}\sum_{K\subseteq \{1,\dots,N_n\},|K|\geq 2} (\overline c')^{|K|}\E[\1_{\widehat{E}_n\cap F_n^1\cap F_n^2}E^{0,x}[e^{H^\beta_{[1,n],\mathcal I^c}}\1_{\VVisit(K,K^c)}] ],
\end{align*}
where $\overline c':=\E[e^{\beta\omega_{0,0}-\lambda(\beta)}\vee 1]$. By the same argument as above we obtain, for  $n$ large enough,
\begin{align*}
	\P(B_k>n^{-d},\widehat{E}_n)\leq 5 n^{-1}+cn^{-\delta/3}.\tag*{\qedhere}
\end{align*}
\end{proof}

\begin{proof}[Proof of \eqref{eq:detail}]
	To simplify the notation, we will use bold symbols, e.g., $\boldsymbol t$ and $\boldsymbol T$, for vector-valued quantities $t_1,...,t_{N_n}$ and $T_1,...,T_{N_n}$. By interchanging the order of integration in \eqref{eq:detail}, we see that it is enough to show that for all $\mathbf t=(t_1,\dots,t_{N_n})$ and $\mathbf y=(y_1,\dots,y_{N_n})$ such that $\big\{(\mathbf T,\mathbf Y)=(\mathbf y,\mathbf y)\big\}\subseteq F_n^2\cap \widehat E_n$ and all paths $\pi\in \VVisit(K,K^c)$, it holds that 
\begin{align*}
		\E\Big[\1_{(\mathbf T,\mathbf Y)=(\mathbf t,\mathbf y),F_n^1}e^{H^\beta_{[1,n],\mathcal I^c}(\omega,\pi)}\Big|\mathcal F_{\boldsymbol t}\Big]\leq \E\big[\1_{(\mathbf T,\mathbf Y)=(\mathbf t,\mathbf y),F_n^1}\big|\mathcal F_{\boldsymbol t}\big]\prod_{k\in K}e^{\beta H_{[t_k-\ell_n,t_k]}(\omega,\pi)-\ell_n\lambda(\beta)},
	\end{align*}
	where $\F_{\boldsymbol t}:=\sigma(\omega_{t,x}:t\in\bigcup_{k\in K}[t_k-\ell_n,t_k])$. Recall that the \localization sites $Y_1,\dots,Y_{N_n}$ have been defined with the help of an auxiliary sequence of random variables $U_1,\dots,U_{N_n}$ in the beginning of Section~\ref{sec:local}. Integrating out this randomness gives
\begin{align*}
&	\E[\1_{(\mathbf T,\mathbf Y)=(\mathbf t,\mathbf y)}|\sigma(\omega)]=\prod_{k=1}^{N_n}\frac{ \1{\{\cev W^{t,x}\leq n^{\frac{2+d}{2\q}-\eps/2}\text{ for all }t\in\mathtt T_n\cap (t_k,2kn^{1-2\delta}],x\in \mathtt S_n\}}}{\big|\{x\in\mathtt S_n:\cev W^{t_k,x}>n^{\frac{2+d}{2\q}-\eps/2}\}\big|}.
\end{align*}
By assumption, $\pi$ does not visit $\bigcup_{k\in K^c}[t_k-\ell_n,t_k]\times[y_k-\ell_n,y_k+\ell_n]^d$, therefore this expression is a non-increasing function of $(\omega_{t,\pi_t})_{t\in [1,n]\setminus \bigcup_{k\in K}[t_k-\ell_n,t_k]}$. In addition, $\1_{F_n^1}$ is non-increasing in all coordinates. Thus $\E\big[\1_{(\mathbf T,\mathbf Y)=(\mathbf t,\mathbf y),F_n^1}\big|\mathcal F'_{\boldsymbol t}\big]$ is also a non-increasing function of $(\omega_{t,\pi_t})_{t\in [1,n]\setminus \bigcup_{k\in K}[t_k-\ell_n,t_k]}$, where $\F'_{\boldsymbol t}\coloneqq \sigma(\omega_{t,x}:t\in\bigcup_{k\in K}[t_k-\ell_n,t_k]\text{ or } x=\pi_t)$.

\smallskip On the other hand, it is clear that $e^{H^\beta_{[1,n],\mathcal J^c}(\omega,\pi)}$ is $\F_{\boldsymbol t}'$-measurable and non-decreasing in $(\omega_{t,\pi_t})_{t\in[1,n]\setminus \bigcup_{k\in K}[t_k-\ell_n,t_k]}$. The claim follows from the FKG inequality, \cite[Theorem 3]{P74}:

\begin{align*}
	&\E\big[ e^{H^\beta_{[1,n],\mathcal I^c}(\omega,\pi)}\1_{(\mathbf T,\mathbf Y)=(\mathbf t,\mathbf y),F_n^1}\big|\F_{\boldsymbol t}\big]\\
	&= \E\Big[ e^{H^\beta_{[1,n],\mathcal I^c}(\omega,\pi)}\E\big[\1_{(\mathbf T,\mathbf Y)=(\mathbf t,\mathbf y),F_n^1}\big|\F'_{\boldsymbol t}\big]\Big|\F_{\boldsymbol t}\Big]\\
	&\leq  \E\big[ e^{H^\beta_{[1,n],\mathcal I^c}(\omega,\pi)}\big|\F_{\boldsymbol t}\big]\E\Big[\E\big[\1_{(\mathbf T,\mathbf Y)=(\mathbf t,\mathbf y),F_n^1}\big|\F'_{\boldsymbol t}\big]\Big|\F_{\boldsymbol t}\Big]\\
	&=  \prod_{k\in K}e^{\beta H_{[k-\ell_n,k)}(\omega,\pi)-\ell_n\lambda(\beta)}\P\Big((\mathbf T,\mathbf Y)=(\mathbf t,\mathbf y),F_n^1\Big|\F\Big).
\end{align*}
Note that the ``lattice condition'', \cite[first display of Theorem 3]{P74}, is always satisfied for product measures.
\end{proof}
\section{Proof of Theorem~\ref{thm:upper_lower}: Upper bound}\label{sec:upper} 

\subsection{The quadratic variation}\label{sec:corrector}

We compute the quadratic variation of the martingale introduced in \eqref{eq:def_MG}. This calculation does not rely on \eqref{eq:WD}. Recalling the notation from \eqref{eq:def_reverse_f}, we can define the quadratic variation of $(M^f_{n,m})_{m=0,\dots,n}$ by
\begin{equation}\label{eq:comp_f}
\begin{split}
\langle M_{n,\cdot}^f\rangle_m&\coloneqq\oldconstant{c:compensator} n^{-d}\sum_{(t,x)\in[1,m]\times \Z^d}\Big(\sum_{y\in\Z^d} f(y/\sqrt n) W_{t-1}^{0,y}\big[\1_{X_t=x}\big]\Big)^2\\
&=\oldconstant{c:compensator} n^{-d}\sum_{(t,x)\in[1,m]\times \Z^d}\Big(\cev W^{t,x}_1[f(X_0/\sqrt n)]\Big)^2
\end{split}
\end{equation}
where $\newconstant\label{c:compensator}\coloneqq  e^{\lambda(2\beta)-2\lambda(\beta)}-1$. Note that $\langle M_{n,\cdot}^f\rangle_m$ is $\F_{m-1}$-measurable. 

\begin{proposition}\label{prop:corrector}
Let $f\colon\R^d\to\R$ be compactly supported and recall \eqref{eq:def_MG} and \eqref{eq:comp_f}. Then $((M_{n,m}^f)^2-\langle M_{n,\cdot}^f\rangle_m)_{m=0,\dots,n}$ is a martingale.
\end{proposition}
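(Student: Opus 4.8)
The plan is to compute the predictable quadratic variation of $(M^f_{n,m})_m$ directly and check that it matches the right-hand side of \eqref{eq:comp_f}. Since $f$ is compactly supported, each $W^{0,x}_m$ is a finite linear combination of exponentials of finitely many weights $\omega$, so \eqref{eq:exp_mom} yields $M^f_{n,m}\in L^2(\P)$ for every $m$ and all the conditional expectations below are well defined. It therefore suffices to show that for each $m\in\{1,\dots,n\}$,
\[
\E\big[(M^f_{n,m})^2-(M^f_{n,m-1})^2\,\big|\,\F_{m-1}\big]
=\oldconstant{c:compensator}\,n^{-d}\sum_{x\in\Z^d}\big(\cev W^{m,x}_1[f(X_0/\sqrt n)]\big)^2 .
\]

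First I would isolate the increment $M^f_{n,m}-M^f_{n,m-1}$. Factoring $e^{\beta H_m-m\lambda(\beta)}=e^{\beta H_{m-1}-(m-1)\lambda(\beta)}e^{\beta\omega_{m,X_m}-\lambda(\beta)}$ and splitting the walk according to its position $X_m=z$ at time $m$ gives
\[
W^{0,x}_m-W^{0,x}_{m-1}=\sum_{z\in\Z^d}W^{0,x}_{m-1}\big[\1_{X_m=z}\big]\,\big(e^{\beta\omega_{m,z}-\lambda(\beta)}-1\big),
\]
with the restricted partition function as in \eqref{eq:def_restricted}; crucially $W^{0,x}_{m-1}[\1_{X_m=z}]$ uses the environment only at times $\le m-1$ and is hence $\F_{m-1}$-measurable. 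Multiplying by $n^{-d/2}f(x/\sqrt n)$, summing over the (finitely many relevant) $x$, interchanging the finite sums over $x$ and $z$, and using the identity $\sum_x f(x/\sqrt n)W^{0,x}_{m-1}[\1_{X_m=z}]=\cev W^{m,z}_1[f(X_0/\sqrt n)]$ already recorded in \eqref{eq:comp_f} (this is just reversibility of the simple random walk), one obtains
\[
M^f_{n,m}-M^f_{n,m-1}=n^{-d/2}\sum_{z\in\Z^d}\cev W^{m,z}_1[f(X_0/\sqrt n)]\,\big(e^{\beta\omega_{m,z}-\lambda(\beta)}-1\big).
\]

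Next I would take conditional expectations. The variables $(e^{\beta\omega_{m,z}-\lambda(\beta)}-1)_{z\in\Z^d}$ are i.i.d., independent of $\F_{m-1}$, centered, with second moment $e^{\lambda(2\beta)-2\lambda(\beta)}-1=\oldconstant{c:compensator}$, while the coefficients $\cev W^{m,z}_1[f(X_0/\sqrt n)]$ are $\F_{m-1}$-measurable. Hence $\E[M^f_{n,m}-M^f_{n,m-1}\mid\F_{m-1}]=0$ (so $(M^f_{n,m})_m$ is genuinely a martingale), the off-diagonal terms of the square vanish, and $\E[(M^f_{n,m}-M^f_{n,m-1})^2\mid\F_{m-1}]=\oldconstant{c:compensator}\,n^{-d}\sum_{z}(\cev W^{m,z}_1[f(X_0/\sqrt n)])^2$. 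Expanding $(M^f_{n,m})^2-(M^f_{n,m-1})^2=(M^f_{n,m}-M^f_{n,m-1})^2+2M^f_{n,m-1}(M^f_{n,m}-M^f_{n,m-1})$ and using the martingale property to kill the conditional expectation of the cross term identifies this with $\E[(M^f_{n,m})^2-(M^f_{n,m-1})^2\mid\F_{m-1}]$, which is exactly the increment of $\langle M^f_{n,\cdot}\rangle$; summing over $m$ proves the claim.

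There is no real obstacle here: this is a routine computation of a discrete-time compensator. The only points requiring a little care are the bookkeeping — that $W^{0,x}_{m-1}[\1_{X_m=z}]$ is $\F_{m-1}$-measurable, and that all sums over $x$ and $z$ are finite because $f$ has compact support and the simple random walk has finite range — and the reversibility identity $\sum_x f(x/\sqrt n)W^{0,x}_{m-1}[\1_{X_m=z}]=\cev W^{m,z}_1[f(X_0/\sqrt n)]$, which is in any case already incorporated into the definition \eqref{eq:comp_f}.
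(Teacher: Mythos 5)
Your argument is correct and follows essentially the same route as the paper: the same one-step factorization of the partition function, the same reversibility identity rewriting $\sum_y f(y/\sqrt n)(W_m^{0,y}-W_{m-1}^{0,y})$ as $\sum_z(e^{\beta\omega_{m,z}-\lambda(\beta)}-1)\cev W_1^{m,z}[f(X_0/\sqrt n)]$, and the same variance computation using $\E[(e^{\beta\omega_{m,z}-\lambda(\beta)}-1)(e^{\beta\omega_{m,z'}-\lambda(\beta)}-1)]=\oldconstant{c:compensator}\1_{z=z'}$. The only difference is that you spell out the integrability and the reduction to a one-step compensator identity a bit more explicitly, which the paper leaves implicit.
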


\begin{proof}
Using the reversibility of the simple random walk, we have
\begin{align*}
W_t^{0,y}-W_{t-1}^{0,y}&=\sum_{x\in\Z^d}E^{0,y}\left[\left(e^{\beta H_{[1,t]}(\omega,X)-t\lambda(\beta)}-e^{\beta H_{[1,t-1]}(\omega,X)-(t-1)\lambda(\beta)}\right)\1_{X_t=x}\right]\\
&=\sum_{x\in\Z^d}\big(e^{\beta\omega_{t,x}-\lambda(\beta)}-1\big)E^{0,y}\left[e^{\beta H_{[1,t-1]}(\omega,X)-(t-1)\lambda(\beta)}\1_{X_t=x}\right]\\
&=\sum_{x\in\Z^d}\big(e^{\beta\omega_{t,x}-\lambda(\beta)}-1\big)\cev E^{t,x}\left[e^{\beta H_{[1,t-1]}(\omega,X)-(t-1)\lambda(\beta)}\1_{X_0=y}\right],
\end{align*}
so that
\begin{align*}
\sum_{y\in\Z^d}f(y/\sqrt n)(W_t^{0,y}-W_{t-1}^{0,y})=\sum_{x\in\Z^d}(e^{\beta\omega_{t,x}-\lambda(\beta)}-1)\cev W_1^{t,x}[f(X_0/\sqrt n)].
\end{align*}
Hence
\begin{align*}
\langle M_{n,\cdot}^f\rangle_m&=\sum_{t=1}^m\E\left[(M_{n,t}^f-M_{n,t-1}^f)^2\big|\F_{t-1}\right]\\
&=n^{-d}\sum_{t=1}^m\E\Big[\Big( \sum_{y\in\Z^d}f(y/\sqrt n)(W_t^{0,y}-W_{t-1}^{0,y})\Big)^2\Big|\F_{t-1}\Big]\\
&=\oldconstant{c:compensator}n^{-d}\sum_{(t,x)\in[1,m]\times\Z^d}\Big(\cev W_1^{t,x}[f(X_0/\sqrt n)]\Big)^2,
\end{align*}
where in the last line we used that $\E\left[\big(e^{\beta\omega_{t,x}-\lambda(\beta)}-1\big)\big(e^{\beta\omega_{t,x'}-\lambda(\beta)}-1\big)\right]=\oldconstant{c:compensator}\1_{x=x'}$.
\end{proof}

\subsection{Upper bound on the corrector}\label{sec:upper_corrector}

In this section we prove the first part of Theorem~\ref{thm:upper_lower}(ii), namely \eqref{eq:upper_bound}, according to the following strategy.

\smallskip To obtain an upper bound for the sum in \eqref{eq:comp_f}, we argue that the index set can be changed to ``$(t,x)\in[0,n]\times[-n^{1/2+o(1)},n^{1/2+o(1)}]^d$'' and then drop the term ``$f(X_0/\sqrt n)$'', which leads to \eqref{eq:corrector_approx}. For the remaining sum, we group the summands into levels depending on the value of $\cev W_{1}^{t,x}$. For each level, there is a competition between the number of sites $(t,x)$  attaining this level and the contribution from each site. 

\smallskip For $\p\in(1,2)$, it turns out that the dominant contribution to \eqref{eq:corrector_approx} is from the highest level set, $\cev W_1^{t,x}=n^{(1+d/2)/\p-o(1)}$, which is attained by $n^{o(1)}$ terms. Heuristically, $((W^{t,x}_1)^2)_{t,x}$ behaves like an i.i.d. sequence of random random variables whose tail decays like $u^{-\p/2}$ for $u\to\infty$, so $\p<2$ means that the sum \eqref{eq:corrector_approx} does not satisfy a law of large numbers. Recall also the discussion following Theorem~\ref{thm:main}. Since the scaling limit of such a sum is a stable random variable, we expect that a potential scaling limit analog to Theorem~\ref{thmx:ew}(ii) for $\beta>\beta_{cr}^{L^2}$ should be a suitable stable analog of the Gaussian Free Field.

\begin{proof}[Proof of \eqref{eq:upper_bound}]
Choose $L>0$ such that the support of $f$ is contained in the interior of $[-L,L]^d$ and fix $\delta>0$ to be chosen later. We start by taking care of the contribution to \eqref{eq:comp_f} from $(t,x)$ that are far from the origin, i.e., $|x|_\infty\geq n^{1/2+\delta}$. Note that 
\begin{align*}
\cev W_{1}^{t,x}\big[\1_{X_0\in [-Ln^{1/2},Ln^{1/2}]^d}\big]\isDistr W_{t-1}\big[\1_{X_t\in x+[-Ln^{1/2},Ln^{1/2}]^d}\big].
\end{align*}
There exists $c>0$ such that, for all $n\in\N$, $A>0$ and $(t,x)\in[1,n]\times[-n,n]^d$ with $|x|_\infty>n^{1/2+\delta}$,
\begin{align*}
&\P\left(\big|\cev W_{1}^{t,x}\big[f(X_0/\sqrt n)\big]\big|>\|f\|_\infty A\right)\\
&\leq \P\left(W_{t-1}\big[\1_{X_t\in x+[-Ln^{1/2},Ln^{1/2}]^d}\big]>A\right)\\
&\leq A^{-1}\E\left[W_{t-1}\big[\1_{X_t\in x+[-Ln^{1/2},Ln^{1/2}]^d}\big]\right]\\
&=A^{-1}\PSRW\big(X_t\in x+[-Ln^{1/2},Ln^{1/2}]^d\big)\\
 &\leq A^{-1}\PSRW\big(X_n\cdot e_1\geq n^{1/2+\delta}-Ln^{1/2})\\
 &\leq A^{-1}e^{-cn^{2\delta}},
\end{align*}
where we used the moderate deviation bound for $\PSRW$ from \cite[Theorem~3.7.1]{DZ} in the final inequality. Thus by applying the above bound with $A\coloneqq e^{-cn^{2\delta}/2}/\|f\|_\infty $ together with the union bound we obtain
\begin{equation}\label{eq:bound_super}
\begin{split}
&\P\left(\exists (t,x)\in[1,n]\times[-n,n]^d,|x|_\infty >n^{1/2+\delta}\colon\left(\cev W_{1}^{t,x}\big[f(X_0/\sqrt n)\big]\right)^2\geq e^{-cn^{2\delta}}\right)\\
&\qquad\leq c'n^{1+d}e^{-cn^{2\delta}/2}.
\end{split}
\end{equation}
On the other hand, on the complement of the above event, we have
\begin{align}\label{eq:contr_super}
\sum_{(t,x)\in[1,n]\times[-n,n]^d,|x|_\infty>n^{1/2+\delta}}\left(\cev W_{1}^{t,x}\big[f(X_0/\sqrt n)\big]\right)^2\leq n^{1+d}e^{-cn^{2\delta}}.
\end{align}
It remains to control the contribution from the bulk, i.e., the above sum with $|x|_\infty\leq n^{1/2+\delta}$. To this end, let $K\coloneqq \ceil{\frac{1+d(1/2+\delta)}{\delta \p(1-\delta)}}$ and consider the levels
\begin{align*}
I_0(n)&\coloneqq [0,n^{\delta}),\\
I_{K+1}(n)&\coloneqq \big[n^{(K+1)\delta},\infty\big),\\
I_k(n)&\coloneqq \big[n^{k\delta},n^{(k+1)\delta}\big)\quad\text{ for }k=1,\dots,K,
\end{align*}
together with 
\begin{align*}
\A_k(n)&\coloneqq \left\{(t,x)\in[1,n]\times [-n^{1/2+\delta},n^{1/2+\delta}]^d\colon \cev W_{1}^{t,x}\in I_k\right\}\quad\text{ for }k=0,\dots,K+1,\\
A_k(n)&\coloneqq \left\{|\A_k|\leq n^{1+d(1/2+\delta)-\p k\delta(1-\delta)+\delta}\right\}\quad\text{ for }k=0,\dots,K,\\
A_{K+1}(n)&\coloneqq \{|\A_{K+1}|=0\}.
\end{align*}
For the most part, we will drop the dependence on $n$ to simplify the notation. Note that $\P(A_0)=1$ and, for $k=1,\dots,K$,
\begin{align*}
\P(A_k^c)&\leq n^{-1-d(1/2+\delta)+\p k\delta(1-\delta)-\delta}\E[|\A_k|]\\
&\leq n^{\p k\delta(1-\delta)-\delta}\max_{(t,x)\in [1,n]\times [-n^{1/2+\delta},n^{1/2+\delta}]^d}\P\left(\cev W_{1}^{t,x}\in I_k\right)\\
&\leq n^{\p k\delta(1-\delta)-\delta}\max_{t=1,\dots,n}\P\left(W_{t}\geq n^{k\delta}\right)\\
&\leq n^{-\delta}\sup_k \E\left[W_k^{\p(1-\delta)}\right].
\end{align*}
We have assumed $\p(\beta)>1$, so the supremum in the final line is finite. Similarly,
\begin{align*}
\P(A_{K+1}^c)
\leq n^{-\p\delta(1-\delta)}\sup_k\E\left[W_k^{\p(1-\delta)}\right].
\end{align*}
From the definition of $\p$ and the union bound, we conclude that
\begin{align}\label{eq:bulk_bound}
\lim_{n\to\infty}\P\left(A_0(n)\cap\dots\cap A_{K+1}(n)\right)=1.
\end{align}
On that event, we have
\begin{equation}\label{eq:sliced}
\begin{split}
\sum_{(t,x)} \left(\cev W_{1}^{t,x}\big[f(X_0/\sqrt n)\big]\right)^2
&\leq \|f\|_\infty^2\sum_{(t,x)} \left(\cev W_{1}^{t,x}\right)^2\\
&\leq \|f\|_\infty^2\sum_{k=0}^K|\A_k|n^{2(k+1)\delta}\\
&\leq \|f\|_\infty^2\sum_{k=0}^Kn^{1+d(1/2+\delta)-\p k\delta(1-\delta)+\delta}n^{2(k+1)\delta}\\
&= \|f\|_\infty^2\sum_{k=0}^Kn^{1+d(1/2+\delta)+3\delta} n^{k\delta(-\p (1-\delta)+2)},
\end{split}
\end{equation}
where in the first line we sum over $(t,x)\in[1,n]\times [-n^{1/2+\delta},n^{1/2+\delta}]^d$. If $\p\in(1,2]$, the exponent is increasing in $k$, so each summand in the final line is bounded by
\begin{align*}
n^{1+d(1/2+\delta)+3\delta}n^{\big(\frac{1+d(1/2+\delta)}{\delta \p(1-\delta)}+1\big)\delta(-\p (1-\delta)+2)}\leq n^{\frac{2+d(1+2\delta)}{\p(1-\delta)}+5\delta}.
\end{align*}
The claim follows by choosing $\delta>0$ small enough that $\frac{2+d(1+2\delta)}{\p(1-\delta)}+5\delta<\frac{2+d}{\p}+\eps$ and combining the above bound with \eqref{eq:contr_super}, \eqref{eq:bulk_bound} and  \eqref{eq:bound_super}. On the other hand, if $\p>2$, then we choose $\delta\in(0,1/3)$ small enough that $\p(1-\delta)>2$. Thus the exponent in the final line of \eqref{eq:sliced} is decreasing in $k$ and the sum is bounded by
\begin{align*}
\sum_{(t,x)\in[1,n]\times [-n^{1/2+\delta},n^{1/2+\delta}]^d} \left(\cev W_{1}^{t,x}\big[f(X_0/\sqrt n)\big]\right)^2\leq \|f\|_\infty^2(K+1)n^{1+d/2+3\delta}.\tag*{\qedhere}
\end{align*}
\end{proof}

\subsection{Proof of Theorem~\ref{thm:upper_lower}(ii)}\label{sec:proof_upper}

To complete the proof of Theorem~\ref{thm:upper_lower}(ii), we need to translate the bound on the corrector, \eqref{eq:upper_bound}, into a bound on $\X^f$. In contrast to the lower bound in Section~\ref{sec:proof_lower}, the translation from the quadratic variation to the martingale itself is rather standard for the upper bound. 

\begin{proof}[Proof of \eqref{eq:upper_bound_mg} using \eqref{eq:upper_bound}]
We use the stopping time 
\begin{align*}
\tau_n\coloneqq \inf\left\{t\geq 0:\langle M_{n,\cdot}^f\rangle_{t+1}\geq n^{-d+\frac{2+d}{\p\wedge 2}+\eps}\right\}\in\{0,\dots,n-1\}\cup\{\infty\}.
\end{align*}
Note that $\langle M_{n,\cdot}^f\rangle_{t+1}$ is $\F_t$-measurable. Hence $((M_{n,k\wedge\tau_n}^f)^2-\langle M_{n,\cdot}^f\rangle_{k\wedge \tau_n})_{k=0,\dots,n}$ is a martingale and therefore
\begin{align}\label{eq:L2}
\E\big[(M_{n,n\wedge\tau_n}^f)^2\big]=\E[\langle M_{n,\cdot}^f\rangle_{n\wedge \tau_n}]\leq n^{-d+\frac{2+d}{\p\wedge 2}+\eps}.
\end{align}
Now
\begin{align*}
\P\left(M_{n,n}^f\geq n^{-d/2+\frac{2+d}{2\p\wedge 4}+\eps}\right)&\leq \P(\tau_n\leq n)+\P\left(M_{n,n}^f\geq n^{-d/2+\frac{2+d}{2\p\wedge 4}+\eps},\tau_n=\infty\right)\\
&\leq \P(\tau_n\leq n)+\P\left(M_{n,n\wedge\tau_n}^f\geq n^{-d/2+\frac{2+d}{2\p\wedge 4}+\eps}\right).
\end{align*}
The first term converges to zero by \eqref{eq:upper_bound} whereas for the second term, by \eqref{eq:L2},
\begin{align*}
\P\left(M_{n,n\wedge\tau_n}^f\geq n^{-d/2+\frac{2+d}{2\p\wedge 4}+\eps}\right)\leq \frac{\E[(M_{n,n\wedge\tau_n}^f)^2]}{n^{-d+\frac{2+d}{\p\wedge 2}+2\eps}}\leq n^{-\eps}.\tag*{\qedhere}
\end{align*}
\end{proof}

\section{Equality of $\p$ and $\q$: Proof of Theorem~\ref{thm:pq}}\label{sec:pq}

Throughout this section, the assumptions of Theorem~\ref{thm:pq} are in place.

\smallskip In the following, we show that $\E[W_n^{\p+\eps}]$ diverges exponentially fast for any $\eps>0$. The main idea is to show ``strong localization conditional on $W_n$ attaining a large value'', where \emph{strong localization} refers to a phenomenon in strong disorder, see Theorem~\ref{thmx:phase}(iv). Namely, we show that there exists $c=c(\beta)>0$ such that for all $u$ large enough,
\begin{align}\label{eq:strong_loc_wd}
\P(W_n>u)\approx\P\Big(W_n>u,\max_{x\in\Z^d}\mu_{\omega,n}^\beta(X_n=x)>c\Big).
\end{align}
To explain how that bound is useful, let $\tau(u)\coloneqq \inf\{k:W_k>u\}$ and recall from Theorem~\ref{thmx:moments} that for some $c'$ and all $u>1$, 
\begin{align*}
\P(\tau(u)<\infty)\geq c'u^{-\p}.
\end{align*}
On the other hand, we have, on $\{\tau(u)\leq n,\mu_{\omega,\tau(u)}^\beta(X_{\tau(u)}=x)>c\}$,
\begin{align*}
W_n\geq uc\,W_{n-\tau(u)}\circ\theta_{\tau(u),x}
\end{align*}
and, moreover, $W_{n-\tau(u)}\circ\theta_{\tau(u),x}\isDistr W_{n-\tau(u)}$. Repeating this argument $n$ times and recalling \eqref{eq:strong_loc_wd}, we get $W_{nT}\geq (uc)^n$ with probability $\approx\P(\tau(u)\leq T)^n\approx (c'u^\p)^n$. Since $u$ is arbitrary, we obtain an exponential lower bound by choosing $u$ such that $u^\eps> 1/c^{\p+\eps}c'$.

\smallskip We now proceed to explain the strategy for proving \eqref{eq:strong_loc_wd}, which is strongly influenced by the argument used to prove that \eqref{eq:SD} is equivalent to strong localization, i.e.,
\begin{align*}
	\limsup_{n\to\infty}\max_{x\in\Z^d}\mu_{\omega,n}(X_n=x)>0\qquad\text{ almost surely.}
\end{align*}

Recall the definition \eqref{eq:I_n} of the replice overlap $I_n$. We quickly summarize the argument from \cite{Y10}, which is a variation of the earlier work \cite{CH06} in a related setup. The idea is to study a cleverly defined stochastic process $(X_n)_{n\in\N}$. From its Doob decomposition, one sees that $X_n$ is bounded from below by $\lambda_1 \sum_{k=1}^n I_k-\lambda_2\sum_{k=1}^n\1_{I_k>c}$, where $\lambda_1,\lambda_2>0$ and $c>0$ is explicit.  In strong disorder, it is known that the cumulative replica overlap  $\sum_{k=1}^nI_k$ diverges, see Theorem~\ref{thmx:phase}(iv), so from the fact that $(X_n)_{n\in\N}$ is bounded they can conclude that $\sum_{k=1}^n \1_{I_k>c}$ must diverge as well.

\smallskip Despite working with weak disorder, we can adapt this construction for our purposes (Lemma~\ref{lem:exists}) because the definition of $X_n$ and its Doob decomposition only require the assumption \cite[display $(1.31)$]{Y10}, which corresponds to $\beta>\beta_{cr}^{L^2}$ in our setup. Of course, the final part of their argument does not apply since we know that $\sum_{k=1}^nI_k$ is almost surely bounded in weak disorder. Instead, we show in Lemma~\ref{lem:overlap2} that $\sum_{k=1}^nI_k$ is large conditional on $\{W_n\gg 1\}$, from which we can then conclude that $\sum_{k=1}^n\1_{I_k>c}$ must also be large conditional on $\{W_n\gg 1\}$. This allows us to conclude that $I_k>c$ must hold for some $k\leq n$, and an addition argument (Lemma~\ref{lem:downward}) ensures that we can choose $k\approx n$.

\begin{remark}
Our result \eqref{eq:strong_loc_wd}, formally proved in \eqref{eq:loc_in_wd} below, thus shows that the strong localization phenomenon still plays a role in weak disorder with $\beta>\beta_{cr}^{L^2}$, namely that it occurs on the tail-event $W_n\gg 1$. This is in contrast to the fact that the polymer measure is delocalized  in weak disorder, i.e. $\max_x\mu_{\omega,n}^\beta(X_n=x)$ converges to zero almost surely. Since the latter result is a statement about the typical behavior of $\mu_{\omega,n}^\beta$, there is not contradiction.
\end{remark}

\begin{remark}\label{rm:critical}
It is an intriguing question whether \eqref{eq:strong_loc_wd} is still valid for $\beta\leq\beta_{cr}^{L^2}$. The constant $c(\beta)$ obtained from our proof degenerates as $\beta\downarrow\beta_{cr}^{L^2}$, which suggests (but does not prove) that this phenomenon does not extend further into the weak disorder phase.
\end{remark}

Turning now to the proof, we first show that $\sup_nW_n>u$ implies that the cumulative replica overlap $\sum_{m=\tau(\sqrt u)}^{\tau(u)}I_m$ is at least of order $\log(u)$. To put the result  into perspective, recall that $\P(\tau(\alpha^{2k})<\infty)\approx \alpha^{-2k\p}\gg \alpha^{-4k}$.

\begin{lemma}\label{lem:overlap2}
	Recall \eqref{eq:I_n} and let $\alpha\coloneqq 2e^{\beta K}$. There exists $\newconstant\label{c:1}>0$ such that, for any $k\in\N$,
\begin{align}\label{eq:at_least_log}
\P\Big(\textstyle \sum_{n=\tau(\alpha^k)+1}^{\tau(\alpha^{2k})}I_n\leq \oldconstant{c:1} k,\tau(\alpha^{2k})<\infty\Big)\leq \alpha^{-4k}.
\end{align}
\end{lemma}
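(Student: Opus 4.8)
The plan is to recognize $\sum_n I_n$ as, up to a multiplicative constant, the predictable quadratic variation of the additive martingale built from the multiplicative increments of $W_n$, and then to observe that the growth of $W_n$ from level $\alpha^k$ to level $\alpha^{2k}$ forces this martingale to make a displacement of size $\sim k\log\alpha$; a Freedman-type exponential inequality then shows that this cannot happen while the quadratic variation stays small, which is exactly the content of \eqref{eq:at_least_log}.

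To set this up I would write $\Delta_n:=W_n/W_{n-1}-1$. Exactly as in the proof of Proposition~\ref{prop:corrector}, $\Delta_n=\sum_x\mu^\beta_{\omega,n-1}(X_n=x)\big(e^{\beta\omega_{n,x}-\lambda(\beta)}-1\big)$, so $\E[\Delta_n\mid\F_{n-1}]=0$, and since $\omega_{n,\cdot}$ is independent of $\F_{n-1}$,
\[
\E[\Delta_n^2\mid\F_{n-1}]=c'\,I_n,\qquad c':=e^{\lambda(2\beta)-2\lambda(\beta)}-1\in(0,\infty).
\]
By \eqref{eq:bounded} we also have $1+\Delta_n=W_n/W_{n-1}\le e^{\beta K-\lambda(\beta)}\le\alpha/2$, so the increments satisfy $\Delta_n\le R:=\alpha/2-1$ (and $\Delta_n>-1$, since $W_n>0$). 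Put $\sigma:=\tau(\alpha^k)$ and $\tau:=\tau(\alpha^{2k})$; as $\alpha^{2k}\ge\alpha^k$ we have $\sigma\le\tau$, both finite on $\{\tau<\infty\}$, and $N_m:=\sum_{n=\sigma+1}^{m\vee\sigma}\Delta_n$ is a martingale with predictable quadratic variation $V_m=c'\sum_{n=\sigma+1}^{m\vee\sigma}I_n$.

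On $\{\tau<\infty\}$ the overshoot of the hitting times gives $W_\sigma\le\alpha^k\cdot(\alpha/2)$ and $W_\tau>\alpha^{2k}$, whence, using $\log(1+x)\le x$,
\[
N_\tau\ \ge\ \sum_{n=\sigma+1}^{\tau}\log(1+\Delta_n)\ =\ \log\frac{W_\tau}{W_\sigma}\ >\ (k-1)\log\alpha+\log2\ =:\ a_k\ \ge\ \log2 .
\]
Therefore the event in \eqref{eq:at_least_log} is contained in $\{\,\exists m\colon N_m\ge a_k,\ V_m\le c'\oldconstant{c:1}k\,\}$. Since $\Delta_n\le R$, the process $\exp\big(\theta N_m-\psi(\theta)V_m\big)$, with $\psi(\theta):=(e^{\theta R}-1-\theta R)/R^2$, is a supermartingale for every $\theta\ge0$; optional stopping (truncating at $m\wedge\tau$ and letting $m\to\infty$ by Fatou) yields
\[
\P\Big(\textstyle\sum_{n=\sigma+1}^{\tau}I_n\le\oldconstant{c:1}k,\ \tau<\infty\Big)\ \le\ e^{-\theta a_k+\psi(\theta)\,c'\,\oldconstant{c:1}\,k}.
\]
Because $a_k=(k-1)\log\alpha+\log2$ is affine in $k$ with positive slope and $a_1=\log2>0$, one can choose $\theta=\theta(\beta)$ so large that $\theta a_k-4k\log\alpha\ge c''k$ for all $k\ge1$, for some $c''=c''(\beta)>0$; setting $\oldconstant{c:1}:=c''/(\psi(\theta)c')$ then bounds the right-hand side by $e^{-4k\log\alpha}=\alpha^{-4k}$, which is the claim.

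The argument is conceptually soft, so the main work is the quantitative bookkeeping: one must check that the exponent produced by Freedman's inequality dominates $4k\log\alpha$ \emph{uniformly} in $k\ge1$, which is exactly where the precise value $\alpha=2e^{\beta K}$ is used — it controls both the one-sided increment bound $R$ and the overshoot of the hitting times, the surviving $+\log2$ being what keeps $a_k$ positive down to $k=1$ — and one must make the optional-stopping step at the possibly infinite time $\tau$ rigorous. Note that weak disorder plays no role in this lemma: only \eqref{eq:bounded} and the martingale property of $(W_n)$ are used.
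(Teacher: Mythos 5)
Your proof is correct and follows essentially the same route as the paper. The paper's proof also introduces a discrete stochastic integral $(H\cdot W)_n=\sum_m H_m(W_m-W_{m-1})$ whose quadratic variation is comparable to $\sum I_m$, shows this martingale makes a forced displacement of order $k$ between $\tau(\alpha^k)$ and $\tau(\alpha^{2k})$, and concludes via an exponential-supermartingale (Bernstein/Freedman) inequality; the only real difference is the choice of predictable integrand. The paper takes $H_n=\alpha^{-\lceil\log_\alpha W^*_{n-1}\rceil}$ with $W^*_{n-1}$ the running maximum, which makes $H_n$ piecewise constant between level crossings so that the displacement can be read off as a telescoping sum over ``level transitions'' $i=k+1,\dots,2k$, each contributing at least $1/2$; you instead take $H_n=1/W_{n-1}$, i.e.\ the multiplicative increment $\Delta_n=W_n/W_{n-1}-1$, and obtain the displacement from the elementary inequality $\sum\Delta_n\ge\sum\log(1+\Delta_n)=\log(W_\tau/W_\sigma)$. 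Both give increments bounded above by $e^{\beta K-\lambda(\beta)}-1$ (the quantity the factor $\alpha/2=e^{\beta K}$ is designed to dominate) and quadratic variation $c'\sum I_n$ with $c'=e^{\lambda(2\beta)-2\lambda(\beta)}-1$, and both choose the parameter $\theta$ (resp.\ $\lambda$ in the paper) first and then define $c_{\ref{c:1}}$ in terms of it. Your version is arguably slightly cleaner since it avoids the quantization by the running max; the one small point worth being precise about, which the paper shares, is that the bound $W_n/W_{n-1}\le e^{\beta K}$ implicitly uses $\lambda(\beta)\ge0$, i.e.\ a centering $\E[\omega_{0,0}]\ge0$.
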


 Recall from \eqref{eq:bounded} that $K$ is the upper bound on the environment. The constant \oldconstant{c:1} (as well as \oldconstant{c:3} and \oldconstant{c:4} below) depends on $\alpha$, and hence on $K$ and $\beta$. 
\begin{proof} We define $H_n\coloneqq \alpha^{-\ceil{\log_\alpha W_{n-1}^*}}$, where $W_n^*\coloneqq \sup_{m\leq n}W_m$, and consider the discrete stochastic integral 
\begin{align}\label{eq:stoch_inte}
(H\cdot W)_n\coloneqq \sum_{m=1}^n H_m (W_m-W_{m-1}).
\end{align}
Since $(H_m)_{m\in\N}$ is previsible, $((H\cdot W)_n)_{n\in\N}$ is a martingale with bounded increments, 
\begin{align*}
(H\cdot W)_{n+1}-(H\cdot W)_n=H_{n+1}(W_{n+1}-W_{n})\quad\begin{cases}
\leq e^{\beta K}-1,\\
\geq -\alpha^{-1}. 
\end{cases}
\end{align*}
For the upper bounded, we have used \eqref{eq:bounded} to guarantee that, almost surely,
\begin{align*}
	W_{n+1}-W_n\leq (e^{\beta K}-1)W_n.
\end{align*}
 Next, we bound the quadratic variation as follows
\begin{equation}\label{eq:qv}
\begin{split}
&\left\langle H\cdot W\right\rangle_{n\wedge \tau(\alpha^{2k})}-\left\langle H\cdot W\right\rangle_{\tau(\alpha^{k})\wedge n}\\
&=\sum_{m=(\tau(\alpha^{k})\wedge n)+1}^{\tau(\alpha^{2k})\wedge n}\E\left[\big((H\cdot W)_m-(H\cdot W)_{m-1}\big)^2\Big|\F_{m-1}\right]\\
&=\sum_{m=(\tau(\alpha^{k})\wedge n)+1}^{\tau(\alpha^{2k})\wedge n}\alpha^{-2\ceil{\log_\alpha(W_{m-1}^*)}}\E\left[(W_m-W_{m-1})^2\big|\F_{m-1}\right]\\
&=\sum_{m=(\tau(\alpha^{k})\wedge n)+1}^{\tau(\alpha^{2k})\wedge n}\alpha^{-2\ceil{\log_\alpha(W_{m-1}^*)}}W_{m-1}^2\\
 &\qquad\qquad\times \E\Big[\Big(\sum_x \mu_{m-1}(X_m=x)\big(e^{\beta \omega_{m,x}-\lambda(\beta)}-1\big)\Big)^2\Big|\F_{m-1}\Big]\\
 &=\oldconstant{c:compensator}\sum_{m=(\tau(\alpha^{k})\wedge n)+1}^{\tau(\alpha^{2k})\wedge n}\alpha^{-2\ceil{\log_\alpha(W_{m-1}^*)}}W_{m-1}^2 I_m\\
 &\leq c\sum_{m=(\tau(\alpha^{k})\wedge n)+1}^{\tau(\alpha^{2k})\wedge n} I_m,
\end{split}
\end{equation}
where we recall that $\oldconstant{c:compensator}=e^{\lambda(2\beta)-2\lambda(\beta)}-1$. Moreover, on $\{\tau(\alpha^{2k})\leq n\}$,
\begin{equation}\label{eq:mg}
\begin{split}
(H\cdot W)_{\tau(\alpha^{2k})\wedge n}-(H\cdot W)_{\tau(\alpha^{k})\wedge n}&=\sum_{i=k+1}^{2k}\big((H\cdot W)_{\tau(\alpha^{i})}-(H\cdot W)_{\tau(\alpha^{i-1})}\big)\\
&\geq \sum_{i=k+1}^{2k} \alpha^{-i}\left(W_{\tau(\alpha^i)}-W_{\tau(\alpha^{i-1})}\right)\\
&\geq c' k.
\end{split}
\end{equation}
In the last inequality, we have used \eqref{eq:bounded} and the definition of $\alpha$ to ensure that, almost surely,  $W_{\tau(\alpha^{i-1})}\leq e^{\beta K}\alpha^{i-1}={\alpha^i}/2$. Combining \eqref{eq:mg} and \eqref{eq:qv}, we thus have, for any $\oldconstant{c:1}>0$,
\begin{equation}\label{eq:obs}
\begin{split}
&\left\{\sum_{m=(\tau(\alpha^k)\wedge n)+1}^{\tau(\alpha^{2k})\wedge n}I_m\leq \oldconstant{c:1} k,\tau(\alpha^{2k})\leq n\right\}\\
& \subseteq \left\{\begin{aligned}&(H\cdot W)_{\tau(\alpha^{2k})\wedge n}-(H\cdot W)_{\tau(\alpha^{k})\wedge n}\geq c^{\prime}k,\\& \left\langle H\cdot W\right\rangle_{\tau(\alpha^{2k})\wedge n}-\left\langle H\cdot W\right\rangle_{\tau(\alpha^{k})\wedge n}\leq c\oldconstant{c:1} k\end{aligned}\right\}.
\end{split}
\end{equation}
Let $\lambda>0$. By \cite[Chapter VII\,\,\S 3 Lemma 1]{S19}, there exists $\psi(\lambda)>0$ such that the discrete stochastic exponential $(E_n^\lambda)_{n\in\N}$ is a non-negative supermartingale, where
\begin{align*}
E_n^\lambda\coloneqq e^{\lambda ((H\cdot W)_{\tau(\alpha^{2k})\wedge n}-(H\cdot W)_{\tau(\alpha^k)\wedge n})-\psi(\lambda)(\left\langle H\cdot W\right\rangle_{\tau(\alpha^{2k})\wedge n}-\left\langle H\cdot W\right\rangle_{\tau(\alpha^k)\wedge n})}.
\end{align*} 
Hence, using  the Markov inequality and $\E[E^\lambda_n]\leq \E[E^\lambda_0]=1$,
\begin{align*}
&\P\left(\begin{aligned}&(H\cdot W)_{n\wedge\tau(\alpha^{2k})}-(H\cdot W)_{n\wedge \tau(\alpha^{k})}\geq c^{\prime}k,
\\ &\left\langle H\cdot W\right\rangle_{n\wedge\tau(\alpha^{2k})}-\left\langle H\cdot W\right\rangle_{n\wedge \tau(\alpha^{k})}\leq c\oldconstant{c:1} k\end{aligned}\right)\\
     &=\P\left(\begin{aligned}&E_n^\lambda\geq e^{\lambda c'k-\psi(\lambda)(\left\langle H\cdot W\right\rangle_{n\wedge \tau(\alpha^{2k})}-\left\langle H\cdot W\right\rangle_{\tau(\alpha^{k})\wedge n})},\\&\left\langle H\cdot W\right\rangle_{n\wedge\tau(\alpha^{2k})}-\left\langle H\cdot W\right\rangle_{n\wedge \tau(\alpha^{k})}\leq c\oldconstant{c:1} k\end{aligned}\right)\\
     &\leq\P\left(E_n^\lambda\geq e^{k(\lambda c'-\psi(\lambda)c\oldconstant{c:1})}\right)\\
     &\leq e^{-k(\lambda c'-\psi(\lambda)c\oldconstant{c:1})}.
\end{align*}
We thus choose $\lambda\coloneqq (4\log(\alpha)+1)/c'$ and $\oldconstant{c:1}\coloneqq \frac{1}{c\psi(\lambda)}$. The claim follows from \eqref{eq:obs} after taking the limit $n\to\infty$.
\end{proof}

Next, we  show  that strong localization occurs in $[\tau(\sqrt u),\tau(u)]$ conditional on $\sup_nW_n>u$.
\begin{lemma}\label{lem:exists}
There exist $\newconstant\label{c:3},\newconstant\label{c:4}>0$ such that, for all $k\geq \oldconstant{c:4}$,
\begin{align*}
	\P\left(\max_{n=\tau(\alpha^k),\tau(\alpha^k)+1,\dots,\tau(\alpha^{2k})-1}\max_{x\in\Z^d}\mu_{\omega,n}(X_n=x) < \oldconstant{c:3},\tau(\alpha^{2k})<\infty\right)\leq 2\alpha^{-4k}.
\end{align*}
\end{lemma}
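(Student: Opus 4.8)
The plan is to realize the program sketched before the statement: import from \cite{CH06,Y10} the auxiliary process whose Doob decomposition has a drift bounded below by the replica overlap, and combine its boundedness with the overlap lower bound of Lemma~\ref{lem:overlap2}. First I would record the input from \cite{Y10}: under $\beta>\beta_{cr}^{L^2}$ (which is \cite[display $(1.31)$]{Y10}) there is an adapted process $(\Phi_n)_{n\in\N}$ with $0\le\Phi_n\le C$ and Doob decomposition $\Phi_n=\Phi_0+N_n+B_n$, where $N$ is a martingale with $N_0=0$ and $B$ is previsible with $B_0=0$, and there are constants $C,c',c'',c'''>0$, $c_0\in(0,1)$, depending only on $\beta$ and $K$, with
\[
B_n-B_{n-1}\ \ge\ c'\,I_n-c''\,\1_{I_n\ge c_0},\qquad \E\big[(N_n-N_{n-1})^2\,\big|\,\F_{n-1}\big]\le c'''\,I_n,\qquad |N_n-N_{n-1}|\le c'''.
\]
Only the decomposition, the two-sided bound on $\Phi$, and these increment estimates are needed, and they are precisely the part of the argument of \cite{Y10} for the equivalence of \eqref{eq:SD} with strong localization that does not rely on $\sum_mI_m=\infty$; in particular they hold as soon as $\beta>\beta_{cr}^{L^2}$.

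Next I would reduce the event. Set $\oldconstant{c:3}:=c_0$. From $I_m=\sum_x\mu_{\omega,m-1}(X_m=x)^2\le\max_x\mu_{\omega,m-1}(X_m=x)\le\max_x\mu_{\omega,m-1}(X_{m-1}=x)$ it follows that the bad event in Lemma~\ref{lem:exists} is contained in $G_1:=\{\sum_{m=\tau(\alpha^k)+1}^{\tau(\alpha^{2k})}\1_{I_m\ge c_0}=0,\ \tau(\alpha^{2k})<\infty\}$. On $G_1$ the compensator bound gives
\[
c'\sum_{m=\tau(\alpha^k)+1}^{\tau(\alpha^{2k})}I_m\ \le\ B_{\tau(\alpha^{2k})}-B_{\tau(\alpha^k)}\ =\ (\Phi_{\tau(\alpha^{2k})}-\Phi_{\tau(\alpha^k)})-(N_{\tau(\alpha^{2k})}-N_{\tau(\alpha^k)})\ \le\ C-(N_{\tau(\alpha^{2k})}-N_{\tau(\alpha^k)}),
\]
and the variance bound gives $\langle N\rangle_{\tau(\alpha^{2k})}-\langle N\rangle_{\tau(\alpha^k)}\le c'''\sum_mI_m$. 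Invoking Lemma~\ref{lem:overlap2}, off an event of probability $\le\alpha^{-4k}$ we have $\sum_{m=\tau(\alpha^k)+1}^{\tau(\alpha^{2k})}I_m\ge\oldconstant{c:1}k$ on $\{\tau(\alpha^{2k})<\infty\}$; hence on $G_1$ intersected with this good event, $N_{\tau(\alpha^{2k})}-N_{\tau(\alpha^k)}\le C-c'\oldconstant{c:1}k\le-\tfrac12 c'\oldconstant{c:1}k$ once $k\ge\oldconstant{c:4}:=\lceil 4C/(c'\oldconstant{c:1})\rceil$, while $\langle N\rangle_{\tau(\alpha^{2k})}-\langle N\rangle_{\tau(\alpha^k)}\le\tfrac{c'''}{c'}(C+|N_{\tau(\alpha^{2k})}-N_{\tau(\alpha^k)}|)$.

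It then remains to bound the probability that $N$ makes such a large downward excursion on $[\tau(\alpha^k),\tau(\alpha^{2k})]$ with the accompanying control of its quadratic variation, which I would do via an exponential supermartingale exactly as in the proof of Lemma~\ref{lem:overlap2}: since $N$ has bounded increments, by \cite[Chapter VII\,\S3 Lemma~1]{S19} there is $\psi(\mu)>0$ with $\psi(\mu)/\mu\to0$ as $\mu\downarrow0$ such that $\exp\big(-\mu(N_{n\wedge\tau(\alpha^{2k})}-N_{n\wedge\tau(\alpha^k)})-\psi(\mu)(\langle N\rangle_{n\wedge\tau(\alpha^{2k})}-\langle N\rangle_{n\wedge\tau(\alpha^k)})\big)$ is a non-negative supermartingale started at $1$; on the event above its exponent at $n\ge\tau(\alpha^{2k})$ is at least $(\mu-\psi(\mu)c'''/c')\,|N_{\tau(\alpha^{2k})}-N_{\tau(\alpha^k)}|-\psi(\mu)c''' C/c'$, and choosing $\mu$ with $\psi(\mu)c'''/c'<\mu$, together with Markov's inequality and $n\to\infty$, gives $\P(G_1)\le\alpha^{-4k}+e^{-ck+c'}$ for suitable fresh $c,c'>0$, which is $\le2\alpha^{-4k}$ for $k\ge\oldconstant{c:4}$ large enough provided $c>4\log\alpha$.

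The delicate point is this very last inequality: on $G_1$ the overlap sum $\sum_mI_m$ is only bounded below, so $|N_{\tau(\alpha^{2k})}-N_{\tau(\alpha^k)}|$ and $\langle N\rangle_{\tau(\alpha^{2k})}-\langle N\rangle_{\tau(\alpha^k)}$ may be far larger than $k$, and one must ensure that the exponential rate $\mu-\psi(\mu)c'''/c'$ can be taken simultaneously strictly positive (to absorb the unbounded part) and, after multiplication by $\tfrac12 c'\oldconstant{c:1}$, strictly larger than $4\log\alpha$—a constraint among the fixed constants $c',c''',\oldconstant{c:1}$ produced by \cite{Y10} and Lemma~\ref{lem:overlap2}, which are not free to tune. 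If this direct balancing is too tight, the remedy is to run a joint exponential supermartingale in the two martingales $H\cdot W$ and $N$: on $\{\tau(\alpha^{2k})<\infty\}$ one always has $(H\cdot W)_{\tau(\alpha^{2k})}-(H\cdot W)_{\tau(\alpha^k)}\ge c\,k$ by \eqref{eq:mg}, which already supplies a $k$-linear gain, while on $G_1$ both $\langle H\cdot W\rangle$ and $\langle N\rangle$ are $\le c\sum_mI_m$, so the $\sum_mI_m$-dependent terms can be arranged to cancel; carrying out this bookkeeping is the step that needs care. Importing the properties of $(\Phi_n)$ from \cite{Y10} and the overlap-to-localization inequality $I_m\le\max_x\mu_{\omega,m-1}(X_{m-1}=x)$ are routine.
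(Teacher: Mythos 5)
Your approach is essentially the one the paper follows, but the version of Yoshida's estimates you record is too weak in one crucial place, and this is precisely what produces the balancing difficulty you correctly flag at the end. You assume the quadratic variation satisfies $\E[(N_n-N_{n-1})^2\mid\F_{n-1}]\le c'''\,I_n$ (linear in the overlap), whereas what is actually available from \cite[Section~3.3]{Y10} --- and what the paper imports in Lemma~\ref{lem:byY10} --- is the \emph{quadratic} bound
\begin{align*}
\langle M\rangle_m-\langle M\rangle_{m-1}\ \le\ c'''\,\mathcal R_{m-1}^2 ,
\end{align*}
which, since $\mathcal R_{m-1}\le1$, is strictly stronger. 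This quadratic structure is the whole point: on the event $R_\delta=\{\max_{m}\mathcal R_{m-1}\le\delta\}$ (which contains your bad event $G_1$ up to the constant $\oldconstant{c:3}$) one gets $\langle M\rangle\le c'''\delta\sum\mathcal R_{m-1}$, so the coefficient in front of $\sum\mathcal R_{m-1}$ can be made \emph{arbitrarily small} by shrinking $\delta$. This is exactly the extra degree of freedom your argument is missing: in the exponential supermartingale step you need the rate $\mu-\psi(\mu)\cdot(\text{coeff.\ of }\sum I)$ to be positive \emph{and} large enough to beat $4\log\alpha$ after multiplication by $\tfrac12 c'\oldconstant{c:1}$, and with a fixed coefficient $c'''/c'$ these two demands are in genuine tension (especially since $\oldconstant{c:1}$ from Lemma~\ref{lem:overlap2} itself deteriorates with $\alpha$). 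With the quadratic bound, the paper first chooses $\lambda$ so that $\lambda\cdot\tfrac{c'}{4}\cdot\tfrac{\oldconstant{c:1}}{4d^2}>4\log\alpha$, and only then chooses $\eps:=\frac{1}{\psi(\lambda)}\wedge\frac12$ and sets $\oldconstant{c:3}$ small enough that the $\psi(\lambda)$ term is dominated --- no balancing is required.

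Two further small remarks. First, your compensator form $B_n-B_{n-1}\ge c' I_n-c''\1_{I_n\ge c_0}$ is not what Yoshida gives directly; his bound is $\widetilde A_m-\widetilde A_{m-1}\ge c'\mathcal R_{m-1}-c''\mathcal R_{m-1}^{3/2}$, and the conversion to your indicator form (via $x^{3/2}\le\1_{x>\eps}+\sqrt\eps\,x$ for $x\in[0,1]$) is exactly the content of the paper's claim \eqref{eq:claim1}; it is fine, but you should carry the $\eps$ through rather than hard-wiring $c_0$, since $\eps$ enters the eventual choice of $\oldconstant{c:3}$. Second, the chain $I_m\le\max_x\mu_{\omega,m-1}(X_m=x)\le\max_x\mu_{\omega,m-1}(X_{m-1}=x)$ you use is correct, but the paper instead passes through $\mathcal R_{m-1}\le\max_x\mu_{\omega,m-1}(X_{m-1}=x)$ together with the two-sided comparison $\tfrac1{4d^2}I_m\le\mathcal R_{m-1}\le 4d^2 I_m$ from \eqref{eq:RI}; this is needed anyway because Yoshida's estimates are naturally stated in terms of $\mathcal R$, not $I$, so you should phrase everything in $\mathcal R$. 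The remedy you sketch at the end (running a joint exponential supermartingale in $H\cdot W$ and $N$) is not needed once the quadratic bound is used.
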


\begin{proof}
We adapt the proof from \cite[Section 3]{Y10}, which is in turn an adaption of the proof from \cite{CH06} to the discrete-time setting. There, they consider a slightly different notion of replica overlap, namely
\begin{align*}
\mathcal R_n\coloneqq\sum_x\mu_{\omega,n}(X_n=x)^2.
\end{align*}
In fact, this notion is practically equivalent to $I_n$, since, almost surely,
\begin{align}\label{eq:RI}
\frac{1}{4d^2}I_m\leq \mathcal R_{m-1}\leq 4d^2I_m.
\end{align}
We also note that $\mathcal R_n\leq \max_{x\in\Z^d}\mu_{\omega,n}(X_n=x)$. Let $g\in[0,\infty)^{\Z^d}$ be the quantity from \cite[display $(3.3)$]{Y10}. We consider the process
\begin{align*}
X_n\coloneqq 
\begin{cases}
\mu_{\omega,n}\cdot(\mu_{\omega,n}*g)-\mu_{\omega,\tau(\alpha^k)}\cdot(\mu_{\omega,\tau(\alpha^k)}*g)&\text{ if }n\geq\tau(\alpha^k),\\0&\text{ else. }
\end{cases}
\end{align*}
Here, ``$\cdot$'' refers to the scalar product in $\R^{\Z^d}$ and ``$*$'' to the convolution operator on $\R^{\Z^d}$. We consider the Doob decomposition $X_n=A_n+M_n,$ with $(A_n)_{n\in\N}$ previsible and $(M_n)_{n\in\N}$ a martingale. The next lemma collects results from \cite{Y10} and will be proved further below. 

\begin{lemma}\label{lem:byY10}
There exist $c,c',c''$ and $c'''>0$ such that, almost surely for all $n\in\N$,
\begin{align}
	|X_n|&\leq c.\label{eq:X_n}\\
	A_n& \geq c'\sum_{m=(\tau(\alpha^k)\wedge n)+1}^n \mathcal R_{m-1}-c''\sum_{m=(\tau(\alpha^k)\wedge n)+1}^n \mathcal R_{m-1}^{3/2}.\label{eq:A_n}\\
\left\langle M\right\rangle_n&\leq c'''\sum_{m=(\tau(\alpha^k)\wedge n)+1}^n\mathcal R_{m-1}^2.\label{eq:M_n}
\end{align}
\end{lemma}

To conclude, we introduce the following events
\begin{align*}
	B_1&\coloneqq \Big\{A_n> \frac{c'}{2}\sum_{m=\tau(\alpha^k)+1}^{\tau(\alpha^{2k})}\mathcal R_{m-1}\Big\},\\
	B_2&\coloneqq \Big\{\sum_{m=\tau(\alpha^k)+1}^{\tau(\alpha^{2k})}\mathcal R_{m-1}>\frac{\oldconstant{c:1}}{4d^2}k\Big\},\\
	B_3&\coloneqq \Big\{-M_{\tau(\alpha^{2k})}<\frac{c'}{4}\sum_{m=\tau(\alpha^k)+1}^{\tau(\alpha^{2k})}\mathcal R_{m-1}\Big\},\\
	R_\delta&\coloneqq \Big\{\max_{m=\tau(\alpha^k)+1,\dots,\tau(\alpha^{2k})}\mathcal R_{m-1}\leq\delta\Big\}.
\end{align*}
We claim that, for $k$ large enough, 
\begin{align}
	B_1\cap B_2\cap B_3\cap\big\{\tau(\alpha^{2k})<\infty\big\}&=\varnothing,\label{eq:claim0}\\
B_1^c\cap R_{(c'/4c'')^2} \cap\big\{\tau(\alpha^{2k})<\infty)\big\}&=\varnothing,\label{eq:claim1}\\
	\P\big(B_2^c,\tau(\alpha^{2k})<\infty\big)&\leq \alpha^{-4k},\label{eq:claim2}\\
	\exists \eps\in(0,1)\text{ s.t. }\P(B_3^c,B_2,R_{\eps/c'''},\tau(\alpha^{2k})<\infty)&\leq \alpha^{-4k}.	\label{eq:claim3}
\end{align}
If we set $\oldconstant{c:3}\coloneqq (\frac{c'}{4c''})^2\wedge \frac{\eps}{c'''}$, then the conclusion follows from these claims, since 
\begin{align*}
	R_{(\frac{c'}{4c''})^2\wedge \frac{\eps}{c'''}}\subseteq \left(B_1\cap B_2\cap B_3\right)\cup \big(B_1^c\cap R_{(\frac{c'}{4c''})^2}\big)\cup B_2^c\cup \big(B_3^c\cap B_2\cap R_{\frac{\eps}{c'''}}\big).
\end{align*}
For \eqref{eq:claim0}, we note that, on $B_1\cap  B_3\cap\{\tau(\alpha^{2k})<\infty\}$, 
\begin{align*}
\frac{c'}{2}\sum_{m=\tau(\alpha^k)+1}^{\tau(\alpha^{2k})}\mathcal R_{m-1}< A_n=X_n-M_n\leq c+\frac{c'}{4}\sum_{m=\tau(\alpha^k)+1}^{\tau(\alpha^{2k})}\mathcal R_{m-1}.
\end{align*}
In particular, \eqref{eq:claim0} holds for $k> \frac{c16d^2}{c'\oldconstant{c:1}}=:\oldconstant{c:4}$. 
For \eqref{eq:claim1}, we use the fact that $x^{3/2}\leq \1_{x> \eps}+\sqrt\eps x$ for $x,\eps\in[0,1]$ to get
\begin{align*}
&B_1^c\cap\big\{\tau(\alpha^{2k})<\infty\big\}\\
&\subseteq \left\{\frac{c'}{2c''}\sum_{m=\tau(\alpha^k)+1}^{\tau(\alpha^{2k})}\mathcal R_{m-1}\leq \sum_{m=\tau(\alpha^k)+1}^{\tau(\alpha^{2k})}\mathcal R_{m-1}^{3/2},\tau(\alpha^{2k})<\infty\right\}\\
&\subseteq \left\{\frac{c'}{4c''}\sum_{m=\tau(\alpha^k)+1}^{\tau(\alpha^{2k})}\mathcal R_{m-1}\leq \sum_{m=\tau(\alpha^k)+1}^{\tau(\alpha^{2k})}\1_{\mathcal R_{m-1}> ({c'}/{4c''})^2},\tau(\alpha^{2k})<\infty\right\}.
\end{align*}
Since $\mathcal R_{m-1}$ is almost surely positive, \eqref{eq:claim1} follows. For \eqref{eq:claim2}, \eqref{eq:RI} and Lemma~\ref{lem:overlap2} show
\begin{align*}
	\P(B_2^c,\tau(\alpha^{2k})<\infty)\leq \P\Big(\sum_{m=\tau(\alpha^k)+1}^{\tau(\alpha^{2k})}I_m\leq \oldconstant{c:1}k,\tau(\alpha^{2k})<\infty\Big)\leq \alpha^{-4k}.
\end{align*}
Finally, to prove \eqref{eq:claim3}, we use \eqref{eq:M_n} to obtain that, on $\{\tau(\alpha^{2k})<\infty\}\cap R_{\eps/c'''}$,
\begin{align}\label{eq:qv_small}
\left\langle M\right\rangle_{\tau(\alpha^{2k})}\leq\eps\sum_{m=\tau(\alpha^k)+1}^{\tau(\alpha^{2k})}\mathcal R_{m-1}.
\end{align}
For $\lambda>0$, let
\begin{align*}
E^\lambda_n\coloneqq e^{-\lambda M_{\tau(\alpha^{2k})\wedge n}-\psi(\lambda)\left\langle M\right\rangle_{\tau(\alpha^{2k})\wedge n}}
\end{align*}
denote the discrete exponential supermartingale defined in  \cite[Chapter VII\,\,\S 3, Lemma 1]{S19}. We note that the result applies because $(X_n)_{n\in\N}$, and hence $(M_n)_{n\in\N}$, has bounded increments. By Ville's inequality, we have
\begin{align*}
&\P\left(B_3^c,B_2,R_{\frac{\eps}{c'''}},\tau(\alpha^{2k})<\infty\right)\\
   &\leq \P\left(\sup_mE_{m}^\lambda\geq e^{\frac{c'\lambda}4 \sum_{m=\tau(\alpha^k)+1}^{\tau(\alpha^{2k})}\mathcal R_{m-1}-\psi(\lambda)\left\langle M\right\rangle_{\tau(\alpha^{2k})}},B_2,R_{\frac{\eps}{c'''}},\tau(\alpha^{2k})<\infty\right)\\
   &\leq \P\left(\sup_mE_{m}^\lambda\geq e^{(\lambda c/4-\eps\psi(\lambda)) \sum_{m=\tau(\alpha^k)+1}^{\tau(\alpha^{2k})}\mathcal R_{m-1}}, B_2\right)\\
   &\leq \P\left(\sup_mE_{m}^\lambda\geq e^{k\oldconstant{c:1} (\lambda c/4-\eps\psi(\lambda))/4d^2}\right)\\
   &\leq e^{-k\oldconstant{c:1} (\lambda c/4-\eps\psi(\lambda))/4d^2}.
\end{align*}
We have used \eqref{eq:qv_small} in the second inequality. Now \eqref{eq:claim3} follows with $\lambda\coloneqq 4(\frac{16d^2\log(\alpha)}{\oldconstant{c:1}}+1)/c$ and $\eps\coloneqq 1/\psi(\lambda)\wedge \frac{1}{2}$.
\end{proof}
It remains to justify the claims taken from \cite{Y10} in Lemma~\ref{lem:byY10}.
\begin{proof}[Proof of Lemma~\ref{lem:byY10}]
In Section 3 of \cite{Y10}, they consider the process 
\begin{align*}
\widetilde X_n\coloneqq\mu_{\omega,n}\cdot(\mu_{\omega,n}*g).
\end{align*}
Recalling \eqref{eq:stoch_inte}, we observe that $X$ can be written as a stochastic integral of $\widetilde X$, 
\begin{align*}
	X_n=\widetilde{X}_n-\widetilde{X}_{n\wedge \tau(\alpha^k)}=\sum_{m=1}^n H_m(\widetilde{X}_m-\widetilde{X}_{m-1})= (H\cdot \widetilde X)_n,
\end{align*}
where $H_n\coloneqq \1_{\tau(\alpha^k)<n}$. Let $\widetilde{A}_n+\widetilde{M}_n$ be the Doob decomposition of $\widetilde{X}_n$. Since $H_n$ is previsible, it is easy to check that
 \begin{align}
	 A_n&=\sum_{m=1}^n H_m (\widetilde{A}_m-\widetilde{A}_{m-1})=\sum_{m=(\tau(\alpha^k)\wedge n)+1}^n(\widetilde{A}_{m}-\widetilde{A}_{m-1})\label{eq:related},\\
	 M_n&=\sum_{m=1}^n H_m (\widetilde{M}_m-\widetilde{M}_{m-1})=\sum_{m=(\tau(\alpha^k)\wedge n)+1}^n(\widetilde{M}_m-\widetilde{M}_{m-1}).\label{eq:related2}
\end{align}
By \cite[display $(3.4)$]{Y10}, it holds that $|\widetilde X_n|\leq |g|_1<\infty$, so \eqref{eq:X_n} follows. Next, we recall that by \cite[last display on p.17]{Y10} and \cite[display (3.9)]{Y10} there exist $c',c''>0$ such that, almost surely for all $m\in\N$,
\begin{align*}
	\widetilde A_m-\widetilde A_{m-1}\geq c' \mathcal R_{m-1}-c'' \mathcal R_{m-1}^{3/2}.
\end{align*}
Thus \eqref{eq:A_n} follows from \eqref{eq:related}. Moreover, by \cite[Section 3.3, displays \textbf{2)} and \textbf{3)}]{Y10}, almost surely for all $m\in\N$, 
\begin{align*}
	\langle M\rangle_m-\langle M\rangle_{m-1}&\leq 8|g|_1^2\Big(\E[\mathcal R_{m}^2|\F_{m-1}]+\mathcal R_{m-1}^2\Big)
\end{align*}
Using \eqref{eq:related2}, it is thus enough to show that, almost surely for all $m\in\N$, 
\begin{align*}
	\E[\mathcal R_m^2|\F_{m-1}]\leq (2d)^{7/2}e^{\frac{1}{2}\lambda(8\beta)+\frac{1}{2}\lambda(-8\beta)}\mathcal R_{m-1}^2
\end{align*}
Indeed, using the inequality $(\sum_{i\in I}a_i)^2\leq|I|\sum_{i\in I}a_i^2$, we obtain
\begin{equation}\label{eq:above}
	\begin{split}\sum_{y\in\Z^d}W_m[\1_{X_m=y}]^2&=\sum_{y\in\Z^d}\Big(\sum_{|x-y|_1=1}W_{m}[\1_{X_{m-1}=x,X_m=y}]\Big)^2\\
&\leq 
\sum_{x\in\Z^d}W_{m-1}[\1_{X_{m-1}=x}]^2 A(x),
\end{split}
\end{equation}
where $A(x)\coloneqq{2d}\sum_{|y|_1=1} e^{2\beta\omega_{m,x+y}-2\lambda(\beta)}$. We introduce the probability measure 
\begin{align*}
\nu(x)\coloneqq\frac{W_{m-1}[\1_{X_{m-1}=x}]^2}{\sum_{y\in\Z^d}W_{m-1}[\1_{X_{m-1}=y}]^2}
\end{align*}
and observe that, by \eqref{eq:above},
\begin{align*}
\Big(\sum_{y\in\Z^d}W_m[\1_{X_m=y}]^2\Big)^2\leq \Big(\sum_{y\in\Z^d}W_{m-1}[\1_{X_{m-1}=y}]^2\Big)^2\Big(\sum_{x\in\Z^d}\nu(x) A(x)\Big)^2
\end{align*}
and therefore
\begin{align*}
\mathcal R_m^2\leq \mathcal R_{m-1}^2\frac{\big(\sum_{x\in\Z^d}\nu(x) A(x)\big)^2}{\big(\sum_{x\in\Z^d}\mu_{\omega,m-1}(X_m=x)e^{\beta\omega_{m,x}-\lambda(\beta)}\big)^4}.
\end{align*}
Both $\nu$ and $\mu_{\omega,m-1}$ are $\F_{m-1}$-measurable. Thus, using Cauchy-Schwarz and Jensen's inequality, 
\begin{align*}
&\E\Big[\frac{\big(\sum_x\nu(x) A(x)\big)^2}{\big(\sum_x\mu_{\omega,m-1}(X_m=x)e^{\beta\omega_{m,x}-\lambda(\beta)}\big)^4}\Big|\F_{m-1}\Big]\\
&\leq \E\Big[\Big(\sum_x\nu(x) A(x)\Big)^4\Big|\F_{m-1}\Big]^{1/2}\E\Big[\Big(\sum_x\mu_{\omega,m-1}(X_m=x)e^{\beta\omega_{m,x}-\lambda(\beta)}\Big)^{-8}\Big|\F_{m-1}\Big]^{1/2}\\
&\leq \E[A(0)^4]^{1/2}\E[e^{-8\beta\omega_{m,0}+8\lambda(\beta)}]^{1/2}\\
&\leq (2d)^{7/2}e^{\frac 12\lambda(8\beta)+\frac 12\lambda(-8\beta)}.\qedhere
\end{align*}

\end{proof}

Next, we record a simple  upper tail bound:

\begin{lemma}\label{lem:almost_sure_upper}
Almost surely for any $t\in\N$, $A>0$ and $\lambda>1$,
\begin{align*}
\P\Big(\sup_n\sum_x\mu_{\omega,t}(X_t=x)W_n\circ\theta_{t,x}\geq A\Big|\F_t\Big)\leq A^{-\lambda}\E\big[\sup_nW_n^\lambda\big].
\end{align*}
\end{lemma}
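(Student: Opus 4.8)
The plan is to reduce the supremum over $n$ to a quantity not depending on $n$, and then combine the conditional Markov inequality with Jensen's inequality. First I would note that $\mu_{\omega,t}(X_t=\cdot)$ is an $\F_t$-measurable probability measure on $\Z^d$ and that each $W_n\circ\theta_{t,x}$ is non-negative, so that for every fixed $n$
\begin{align*}
\sum_x\mu_{\omega,t}(X_t=x)\,W_n\circ\theta_{t,x}\leq \sum_x\mu_{\omega,t}(X_t=x)\,\big(\textstyle\sup_k W_k\big)\circ\theta_{t,x}.
\end{align*}
Taking the supremum over $n$ on the left-hand side preserves the inequality, so it suffices to bound $\P\big(\sum_x\mu_{\omega,t}(X_t=x)(\sup_k W_k)\circ\theta_{t,x}\geq A\,\big|\,\F_t\big)$.

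Next I would apply the conditional Markov inequality at exponent $\lambda>1$ and then Jensen's inequality for the convex function $x\mapsto x^\lambda$ with respect to the probability weights $\mu_{\omega,t}(X_t=\cdot)$, using $(\sup_k W_k)^\lambda=\sup_k W_k^\lambda$:
\begin{align*}
\P\Big(\textstyle\sum_x\mu_{\omega,t}(X_t=x)(\sup_k W_k)\circ\theta_{t,x}\geq A\,\Big|\,\F_t\Big)
&\leq A^{-\lambda}\,\E\Big[\big(\textstyle\sum_x\mu_{\omega,t}(X_t=x)(\sup_k W_k)\circ\theta_{t,x}\big)^\lambda\,\Big|\,\F_t\Big]\\
&\leq A^{-\lambda}\,\E\Big[\textstyle\sum_x\mu_{\omega,t}(X_t=x)(\sup_k W_k^\lambda)\circ\theta_{t,x}\,\Big|\,\F_t\Big].
\end{align*}
Finally, since $\mu_{\omega,t}(X_t=x)$ is $\F_t$-measurable while $(\sup_k W_k^\lambda)\circ\theta_{t,x}$ depends only on the environment strictly after time $t$, and is therefore independent of $\F_t$ with the same law as $\sup_k W_k^\lambda$, I would pull the weights out of the conditional expectation and use $\sum_x\mu_{\omega,t}(X_t=x)=1$ to conclude that the last expression equals $A^{-\lambda}\E[\sup_k W_k^\lambda]$, which is the claimed bound. (If $\E[\sup_k W_k^\lambda]=\infty$ the statement is trivial.)

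There is essentially no substantive obstacle here: the only points deserving a line of justification are the interchange $\sup_n\sum_x\le\sum_x\sup_k$ (valid because the weights are non-negative and the summands are non-negative), the correct direction of Jensen's inequality (which holds because $\mu_{\omega,t}(X_t=\cdot)$ is a genuine probability measure), and the measurability/independence split that allows the conditional expectation to be computed term by term using the product structure of the environment and the definition of the shift $\theta_{t,x}$.
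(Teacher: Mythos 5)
Your proof is correct and follows essentially the same route as the paper's: conditional Markov inequality at exponent $\lambda$, the interchange $\sup_n\sum_x\le\sum_x\sup_n$, Jensen's inequality for the probability weights $\mu_{\omega,t}(X_t=\cdot)$, and then the measurability/independence split. The only cosmetic difference is that you perform the $\sup$--$\sum$ interchange before the Markov step while the paper does it afterward, which is immaterial.
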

\begin{proof}Writing $\mu_{\omega,t}(x)$ instead of $\mu_{\omega,t}(X_t=x)$, we have
\begin{align*}
\P\Big(\sup_n\sum_x\mu_{\omega,t}(x)W_n\circ\theta_{t,x}\geq A\Big|\F_t\Big)&\leq A^{-\lambda}\E\left[\left.\Big(\sup_n\sum_x\mu_{\omega,t}(x)W_n\circ\theta_{t,x}\Big)^\lambda \right|\F_t\right]\\
&\leq A^{-\lambda}\E\left[\left.\Big(\sum_x\mu_{\omega,t}(x)\sup_n W_n\circ\theta_{t,x}\Big)^\lambda \right|\F_t\right]\\
&\leq A^{-\lambda}\E\left[\left.\sum_x\mu_{\omega,t}(x)\Big(\sup_nW_n\circ\theta_{t,x}\Big)^\lambda \right|\F_t\right]\\
&= A^{-\lambda}\E\big[\sup_nW_n^\lambda\big],
\end{align*}
where the third inequality is Jensen's inequality. 
\end{proof}
Lemma~\ref{lem:exists} guarantees the existence of a localization time $m\in[\tau(\sqrt u),\tau(u)]$, where $\max_x\mu_{\omega,m}(X_m=x)\geq \oldconstant{c:3}$, but it may be the case that $W_m\ll \sqrt u$. In the next lemma, we exclude that possibility by showing that $(W_n)_n$ does not ``backtrack'' too much after reaching a certain level.

\begin{lemma}\label{lem:downward}
For every $\eta\in(0,1)$, there exist $\delta=\delta(\eta)>0$ and $\newconstant\label{c:2}=\oldconstant{c:2}(\eta)>0$ such that, for all $k\in\N$,
\begin{equation}\label{eq:backtrack}\begin{split}
&\P\left(\exists i\in\{k,\dots,2k-1\}:\min_{m=\tau(\alpha^{i}),\tau(\alpha^i)+1,\dots,\tau(\alpha^{2k})}W_m\leq \alpha^{i-k\eta},\tau(\alpha^{2k})<\infty\right)\\
&\leq \oldconstant{c:2}k\alpha^{-2k(\p+\delta)}.
\end{split}\end{equation}
\end{lemma}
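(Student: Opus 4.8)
The plan is to fix $\eta\in(0,1)$, reduce to a single $i$ via a union bound over $i\in\{k,\dots,2k-1\}$, and prove that for each such $i$ the event
$B_i\coloneqq\big\{\min_{m=\tau(\alpha^i),\dots,\tau(\alpha^{2k})}W_m\leq\alpha^{i-k\eta},\,\tau(\alpha^{2k})<\infty\big\}$
has probability at most $C\alpha^{-\lambda_0 k(2+\eta)}$ for a suitable $\lambda_0<\p$, with $C$ independent of $i$ and $k$. Since $\lambda_0$ may be taken as close to $\p$ as we like while $\eta/2>0$ is fixed, one can ensure $\lambda_0(1+\tfrac\eta2)>\p$; setting $\delta\coloneqq\lambda_0(1+\tfrac\eta2)-\p>0$ rewrites $\alpha^{-\lambda_0 k(2+\eta)}$ as $\alpha^{-2k(\p+\delta)}$, and the $k$-fold union bound produces the prefactor $\oldconstant{c:2}k$ in \eqref{eq:backtrack}.

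For the preparation, I would fix $\lambda_0\in\big(\max\{1,\tfrac{2\p}{2+\eta}\},\p\big)$, a non-empty interval because $\p>1$ by Theorem~\ref{thmx:moments} and $\tfrac{2\p}{2+\eta}<\p$. By definition of $\p$ the martingale $(W_n)_{n\in\N}$ is $L^{\lambda_0}$-bounded, and since $\lambda_0>1$ Doob's maximal inequality gives $\E[\sup_nW_n^{\lambda_0}]<\infty$; I would take $\oldconstant{c:2}(\eta)\coloneqq(\E[\sup_nW_n^{\lambda_0}])^2$. The central object is the stopping time $\sigma_i\coloneqq\inf\{m\geq\tau(\alpha^i):W_m\leq\alpha^{i-k\eta}\}$. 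On $B_i$ one has $\tau(\alpha^i)\leq\sigma_i\leq\tau(\alpha^{2k})<\infty$ and $W_{\sigma_i}\leq\alpha^{i-k\eta}$, and moreover $\sup_{n\geq0}W_{\sigma_i+n}\geq W_{\tau(\alpha^{2k})}>\alpha^{2k}$, so $B_i\subseteq\{\sigma_i<\infty\}\cap\{\sup_nW_{\sigma_i+n}>\alpha^{2k}\}$.

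The estimate then splits into two factors. For the climb out of $\sigma_i$ I would partition over $\{\sigma_i=t\}\in\F_t$, use the Markov-type identity $W_{t+n}=W_t\sum_x\mu_{\omega,t}(X_t=x)\,W_n\circ\theta_{t,x}$ together with $W_t\leq\alpha^{i-k\eta}$ on $\{\sigma_i=t\}$, and apply Lemma~\ref{lem:almost_sure_upper} with exponent $\lambda_0$ and $A=\alpha^{2k-i+k\eta}$; summing over $t$ gives $\P(B_i)\leq\alpha^{-\lambda_0(2k-i+k\eta)}\,\E[\sup_nW_n^{\lambda_0}]\,\P(\sigma_i<\infty)$. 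For the first ascent I would use $\{\sigma_i<\infty\}\subseteq\{\tau(\alpha^i)<\infty\}=\{\sup_nW_n>\alpha^i\}$ and Markov's inequality for $(\sup_nW_n)^{\lambda_0}$ to get $\P(\sigma_i<\infty)\leq\alpha^{-\lambda_0 i}\,\E[\sup_nW_n^{\lambda_0}]$. Multiplying, the exponent $\lambda_0(2k-i+k\eta)+\lambda_0 i=\lambda_0 k(2+\eta)$ no longer depends on $i$, so $\P(B_i)\leq\oldconstant{c:2}(\eta)\,\alpha^{-\lambda_0 k(2+\eta)}=\oldconstant{c:2}(\eta)\,\alpha^{-2k(\p+\delta)}$, and summing over the $k$ admissible values of $i$ yields \eqref{eq:backtrack}.

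The conceptual content — and the only part that is not bookkeeping — is that a backtrack is multiplicatively expensive: reaching $\alpha^i$ has probability $\approx\alpha^{-\lambda_0 i}$, and once $W$ has fallen back to $\alpha^{i-k\eta}$ the remaining ascent to $\alpha^{2k}$ costs an essentially independent factor $\approx\alpha^{-\lambda_0(2k-i+k\eta)}$, so a logarithmic backtrack of depth $k\eta$ degrades the naive cost $\alpha^{-2k\lambda_0}$ to $\alpha^{-(2+\eta)k\lambda_0}$; this surplus is exactly the source of the improved exponent $\p+\delta$. The one technical point to handle with care is that Lemma~\ref{lem:almost_sure_upper} is stated at deterministic times, so it must be invoked at $\sigma_i$ by conditioning on $\{\sigma_i=t\}$ — which is legitimate since $\sigma_i$ is a stopping time, $\{\sigma_i=t\}\in\F_t$, and $W_{\sigma_i+n}/W_{\sigma_i}=\sum_x\mu_{\omega,\sigma_i}(X_{\sigma_i}=x)W_n\circ\theta_{\sigma_i,x}$ on that event.
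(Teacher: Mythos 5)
Your proof is correct and follows the paper's strategy exactly: a union bound over $i$, the stopping time $\sigma_i$ marking the first backtrack below $\alpha^{i-k\eta}$, an application of Lemma~\ref{lem:almost_sure_upper} at $\sigma_i$ (via the decomposition $W_{\sigma_i+n}=W_{\sigma_i}\sum_x\mu_{\omega,\sigma_i}(X_{\sigma_i}=x)W_n\circ\theta_{\sigma_i,x}$) to price the subsequent ascent to $\alpha^{2k}$, and a Markov bound for $\P(\tau(\alpha^i)<\infty)$, yielding $\P(B_i)\leq \E[\sup_nW_n^{\lambda_0}]^2\,\alpha^{-\lambda_0k(2+\eta)}$. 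One remark worth recording: your choice $\lambda_0\in\big(\max\{1,\tfrac{2\p}{2+\eta}\},\p\big)$, $\delta=\lambda_0(1+\eta/2)-\p>0$ is the correct way to close the argument, whereas the paper's stated choice $\lambda=\p(1-\eta/2)$, $\delta=\eta^2\p/2$ appears to contain a slip, since $\lambda(2+\eta)=\p(2-\eta^2/2)<2\p$ and therefore cannot be written as $2(\p+\delta)$ with $\delta>0$; the fix is exactly what you did, namely take $\lambda$ close enough to $\p$ from below that $\lambda(2+\eta)>2\p$.
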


\begin{proof}[Proof of Lemma~\ref{lem:downward}]
For $i\in\{k,\dots,2k-1\}$, let 
\begin{align*}
\sigma_i\coloneqq \inf\{m\in \{\tau(\alpha^{i}),\tau(\alpha^i)+1,\dots,\tau(\alpha^{2k})\}\colon W_m\leq \alpha^{i-k\eta}\}
\end{align*}
and note that \eqref{eq:backtrack} is bounded by $k\max_{i=k,\dots,2k-1}\P(\sigma_i<\infty)$. Using Lemma~\ref{lem:almost_sure_upper}, we get
\begin{align*}
\P\left(\sigma_i<\infty\right)
=&\sum_{s\leq t}\E\left[\1_{\tau(\alpha^{i})=s}\E\left[\1_{\sigma_i=t}\P\big(\tau(\alpha^{2k})<\infty\big|\F_t\big)\Big|\F_s\right]\right]\\
\leq  &\sum_{s\leq t}\E\left[\1_{\tau(\alpha^{i})=s}\E\Big[\1_{\sigma_i=t}\P\Big(\sup_n\sum_x\mu_{\omega,t}(x)W_n\circ\theta_{t,x}\geq \alpha^{2k-i+k\eta}\Big|\F_t\Big)\Big|\F_s\Big]\right]\\
\leq &\alpha^{-\lambda ((2k-i)+k\eta)}\E\big[\sup_nW_n^\lambda\big]\sum_{s\leq t}\E\left[\1_{\tau(\alpha^{i})=s}\E\left[\1_{\sigma_i=t}\Big|\F_s\right]\right]\\
\leq &\alpha^{-\lambda ((2k-i)+k\eta)}\E\big[\sup_nW_n^\lambda\big]\P\big(\sup_nW_n\geq \alpha^i\big)\\
\leq &\alpha^{-\lambda (2+\eta)k}\E\big[\sup_nW_n^\lambda\big]^2.
\end{align*}
We set $\lambda=\p(1-\eta/2)$, so that \eqref{eq:backtrack} holds with $\delta\coloneqq \eta^2\p/2$ and $\oldconstant{c:2}\coloneqq \E[\sup_nW_n^\lambda]^2$.
\end{proof}

Finally, we prove $\p=\q$ by following the idea outlined at the beginning of this section.

\begin{proof}[Proof of Theorem~\ref{thm:pq}]
Let $\eta\coloneqq \frac{\eps}{2(\p+\eps+1)}$. For $i\in\{k,\dots,2k-1\}$, we define
\begin{align*}
A(i)\coloneqq \left\{\exists m\in\{\tau(\alpha^i),\tau(\alpha^i)+1,\dots,\tau(\alpha^{i+1})-1\}:\max_x\mu_{\omega,m}(X_m=x)\geq\oldconstant{c:3},W_m\geq \alpha^{i-k\eta}\right\}.
\end{align*}
By Lemmas~\ref{lem:downward} and~\ref{lem:exists}, for all $k\geq \oldconstant{c:4}$,
\begin{align*}
\P\left(\left(\textstyle \bigcup_{i=k}^{2k-1}A(i)\right)^c,\tau(\alpha^{2k})<\infty\right)\leq 2\alpha^{-4k}+\oldconstant{c:2}(\eta)k\alpha^{-2k(\p+\delta(\eta))}.
\end{align*}
Comparing with \eqref{eq:tail}, we see that there exists $k_0\geq \oldconstant{c:4}$ such that, for all $k\geq k_0$,
\begin{align*}
\P\left(\textstyle \bigcup_{i=k}^{2k-1}A(i),\tau(\alpha^{2k})<\infty\right)\geq \oldconstant{c:tail}\alpha^{-2k\p}/2.
\end{align*}
Thus there exists $l=l(k)\in\{k,\dots,2k-1\}$ such that 
\begin{align}\label{eq:better_lower}
\P\left(A(l),\tau(\alpha^{2k})<\infty\right)\geq \oldconstant{c:tail}\alpha^{-2k\p}/2k.
\end{align}
We define a stopping time $\sigma$ by
\begin{align*}
\sigma\coloneqq \inf\left\{n\in\{\tau(\alpha^l),\tau(\alpha^l)+1,\dots,\tau(\alpha^{l+1})-1\}:\max_x\mu_{\omega,n}(X_n=x)\geq\oldconstant{c:3},W_n\geq \alpha^{l-k\eta}\right\}.
\end{align*}
Using \eqref{eq:better_lower} and Lemma~\ref{lem:almost_sure_upper}, we have, for any $\lambda>1$,
\begin{align*}
\oldconstant{c:tail}\alpha^{-2k\p}/2k&\leq \sum_{m}\E\left[\1_{\sigma=m}\P\left(\tau(\alpha^{2k})<\infty\Big|\F_m\right)\right]\\
&\leq \sum_m\E\Big[\1_{\sigma=m}\P\Big(\sup_n\sum_x\mu_{\omega,m}(x)W_n\circ\theta_{m,x}\geq \alpha^{2k-l}\Big|\F_m\Big)\Big]\\
&\leq \alpha^{-(2k-l)\lambda}\E\big[\sup_nW_n^\lambda\big]\P\left(\sigma<\infty\right).
\end{align*}
We have used that, by construction, $\sigma<\tau(\alpha^{l+1})$. Setting $\lambda\coloneqq \p-\eta$, we get
\begin{align}\label{eq:loc_in_wd}
	\P(\sigma<\infty)\geq c\alpha^{-\p l-(2k-l)\eta}/k,
\end{align}
where $c$ is independent of $k$. With this estimate in hand, we now consider the following renewal construction: On $\sigma<\infty$, let $Z\in\Z^d$ be such that $\mu_{\omega,\sigma}(X_\sigma=Z)\geq \oldconstant{c:3}$. We set $\sigma_0=0$, $Z_0\coloneqq 0$ and then, recursively given $\sigma_i<\infty$ and $Z_i$,
\begin{align*}
\sigma_{i+1}&\coloneqq \sigma\circ\theta_{\sigma_i,Z_i},\\
Z_{i+1}&\coloneqq Z\circ\theta_{\sigma_i,Z_i}\qquad(\text{if }\sigma_{i+1}<\infty).
\end{align*}
Let $T=T(k)\in\N$ be such that
\begin{align}\label{eq:s2}
\P(\sigma\leq T)\geq \P(\sigma<\infty)/2
\end{align}
and $R\coloneqq \inf\{i:\sigma_i-\sigma_{i-1}>T\}$. Clearly, $R$ is geometrically distributed with success parameter $\P(\sigma> T)$. On the event
\begin{align*}
  	\left\{R>\left\lfloor\frac{n}{T}\right\rfloor\right\}\cap\Big\{\inf_{m\in\N}W_m\circ\theta_{\sigma_{\lfloor\frac{n}{T}\rfloor},Z_{\lfloor\frac{n}{T}\rfloor }}>a\Big\}
\end{align*}
we have
\begin{equation}\label{eq:llll}
	\begin{split}
 	W_n&\geq W_n[\1_{X_{\sigma_i}=Z_i\text{ for all }i=1,\dots,\lfloor\frac{n}{T}\rfloor}]\\
 	   &=W_{n-\sigma_{\lfloor\frac{n}{T}\rfloor}}\circ\theta_{\sigma_{\lfloor\frac{n}{T}\rfloor},Z_{\lfloor\frac{n}{T}\rfloor}}\prod_{i=1}^{\lfloor\frac{n}{T}\rfloor}W_{\Delta\sigma_i}[\1_{X_{\Delta\sigma_i}=\Delta Z_i}]\circ\theta_{\sigma_{i-1},Z_{i-1}}\\
 	   &=W_{n-\sigma_{\lfloor\frac{n}{T}\rfloor}}\circ\theta_{\sigma_{\lfloor\frac{n}{T}\rfloor},Z_{\lfloor\frac{n}{T}\rfloor}}\prod_{i=1}^{\lfloor\frac{n}{T}\rfloor}W_{\Delta\sigma_i}\circ\theta_{\sigma_{i-1},Z_{i-1}}\mu_{\theta_{\sigma_{i-1},Z_{i-1}}\omega,\Delta\sigma_i}(X_{\Delta\sigma_i}=\Delta Z_i)\\
 	   &\geq a(\alpha^{l-k\eta}\oldconstant{c:3})^{\lfloor\frac{n}{T}\rfloor},
\end{split}
\end{equation}
where $\Delta\sigma_i=\sigma_i-\sigma_{i-1}$ and $\Delta Z_i=Z_i-Z_{i-1}$. Choosing $a>0$ small enough that $\P(\inf_kW_k>a)\geq \frac 12$, we have
\begin{align*}
\E\left[W_{n}^{\p+\eps}\right]&\geq \P\Big(R>\left\lfloor\frac{n}{T}\right\rfloor,\inf_{m\in\N}W_m\circ\theta_{\sigma_{\left\lfloor\frac{n}{T}\right\rfloor},Z_{\left\lfloor\frac{n}{T}\right\rfloor}}>a\Big)\Big(\oldconstant{c:3} \alpha^{l-k\eta}\Big)^{\left\lfloor\frac{n}{T}\right\rfloor\p+\eps)}a^{\p+\eps}\\
			      &= \P\big(\inf_mW_{m}>a\big)\P(\sigma\leq T)^{\lfloor\frac{n}{T}\rfloor}\big(\oldconstant{c:3}\alpha^{l-k\eta}\big)^{\lfloor\frac{n}{T}\rfloor}a^{\p+\eps}\\
			      &\geq\frac{1}{2} \left(\frac{c\oldconstant{c:3}}{2k}\alpha^{(l-k\eta)(\p+\eps)-\p l-(2k-l)\eta}\right)^{\lfloor\frac{n}{T}\rfloor} a^{\p+\eps}\\
			      &= \frac{1}{2} \left(\frac{c\oldconstant{c:3}}{2k}\alpha^{\eps l-k\eta(\p+\eps)-(2k-l)\eta}\right)^{\lfloor\frac{n}{T}\rfloor} a^{\p+\eps}\\
			      &\geq  \frac{1}{2} \left(\frac{c\oldconstant{c:3}}{2k}\alpha^{k(\eps -\eta(\p+\eps+1))}\right)^{\lfloor\frac{n}{T}\rfloor} a^{\p+\eps}\\
			      &=  \frac{1}{2} \left(\frac{c\oldconstant{c:3}}{2k}\alpha^{k \eps/2}\right)^{\lfloor\frac{n}{T}\rfloor} a^{\p+\eps}.
\end{align*}
We have used \eqref{eq:llll} in the first line, \eqref{eq:loc_in_wd} and \eqref{eq:s2} in the third line, the inequality $l\geq k\geq (2k-l)$ for the fifth line and the definition of $\eta$ in the last line. Finally, we can choose $k$ large enough that the quantity in brackets in the last line is at least $2$, which shows that \eqref{eq:claim} holds with $\oldconstant{c:5}\coloneqq 2^{1/T(k)}>1$ and $\oldconstant{c:useless}\coloneqq \frac{1}{2}(\frac{c\oldconstant{c:3}}{2k}\alpha^{k\eps/2})^{-1}a^{\p+\eps}$.
\end{proof}
 
\begin{appendix}
	\section*{References for Theorems~\ref{thmx:moments} and~\ref{thmx:ew}}\label{appn}
First, we provide the reference for Theorem~\ref{thmx:moments}.

\begin{proof}[Proof of Theorem~\ref{thmx:moments}]
The first claim is proved in \cite[Theorem~1.1$\text{(ii)}$]{J21_1}, where it is also shown that $\sup_n\E[W_n^{\p}]=\infty$. On the other hand, by \cite[display~$\text{(20)}$]{J21_1}, for any $t>1$,
\begin{align}\label{eq:from_proof}
\E\big[(W_n^\beta)^\p\big]\leq t^\p+(te^{\beta K})^\p\P\big({\textstyle\sup_{k=1,\dots,n}}W_k^\beta >t\big)\E\big[(W_n^\beta)^\p\big].
\end{align}
If \eqref{eq:tail} fails for some $t>1$, then \eqref{eq:from_proof} can be rearranged to 
\begin{align*}
\E\big[(W_n^\beta)^\p\big]\leq 2t^\p.
\end{align*}
Since $n$ is arbitrary, we have $\sup_n\E[W_n^\p]<\infty$, which is a contradiction.
\end{proof}

Next, we prove homogenization in the whole weak disorder phase.

\begin{proof}[Proof of Theorem~\ref{thmx:ew}(i)]
Fix $M>1$ and set $N\coloneqq \lfloor n^{1/3}\rfloor$. We decompose 
\begin{align*}
\X_n^{f}&=Y_n+Z^{\leq M}_n+Z^{>M}_n,\qquad\text{ where}\\
Y_n&\coloneqq n^{-d/2}\sum_{x\in\Z^d} f(x/\sqrt n)(W_n^{0,x}-W_{N}^{0,x})\\
Z_n^{\leq M}&\coloneqq n^{-d/2}\sum_{x\in\Z^d} f(x/\sqrt n) (W_N^{0,x}\1_{W_{N}^{0,x}\leq M}-1)\\
Z_n^{>M}&\coloneqq n^{-d/2}\sum_{x\in\Z^d} f(x/\sqrt n) W_{N}^{0,x}\1_{W_{N}^{0,x}>M}
\end{align*}
Recall that $f$ is bounded and compactly supported. By Theorem~\ref{thmx:phase}(iii),  $(W_n)_{n\in\N}$ is uniformly integrable, hence $\sup_n\|Z_n^{>M}\|_1<\eps$ for $M$ large enough. In addition, we have $W_n\to W_\infty$ in $L^1$ and hence $\|Y_n\|_1<\eps$ for $n$ large enough. Finally,
\begin{align*}
\|Z_n^{\leq M}\|_2^2&=n^{-d}\sum_{x,y\in\Z^d}f(x/\sqrt n)f(y/\sqrt n)\E\left[(W_N^{0,x}\1_{W_N^{0,x}\leq M}-1)(W_N^{0,y}\1_{W_N^{0,y}\leq M}-1)\right]\\
&\leq c\E[W_N\1_{W_N\leq M}-1]^2+c'M^2(2N+1)^dn^{-d/2},
\end{align*}
where we have split the sum depending on whether $|x-y|\leq  2N$ or $|x-y|>2N$ and used that $W_N^{0,x}$ and $W_N^{0,y}$ are independent in the latter case. Hence $\lim_{M\to\infty}\lim_{n\to\infty}\|Z_n^{\leq M}\|_1=0$.
\end{proof}
\end{appendix}

\section*{Acknowledgments}
We are grateful to Shuta Nakajima for introducing us to the question. We are also very grateful to Ryoki Fukushima for many interesting discussions about the topic, for carefully reading this manuscript and for many helpful suggestions. We thank Shuta Nakajima, Simon Gabriel, Quentin Berger and Rongfeng Sun for valuable feedback on an earlier version of this manuscript and Rongfeng Sun for pointing out a mistake in that version. Finally, we thank an anonymous referee whose careful reading and helpful comments greatly improved the current article. 

\bibliographystyle{plain}
\bibliography{ref}

\end{document}